\documentclass[12pt]{amsart}
\usepackage{amsmath, amsthm, amssymb, amscd}
\usepackage{tikz-cd}

\usepackage[titletoc,toc, title]{appendix}

\usepackage{hyperref}
\usepackage{cleveref}
\usepackage{mabliautoref}
\usepackage{chngcntr}

\usepackage{geometry}
\geometry{left=2cm}
\geometry{right=2cm}
\geometry{top=2cm}
\geometry{bottom=3cm}

\DeclareMathOperator{\lc}{H}
\DeclareMathOperator{\rank}{rank}
\DeclareMathOperator{\Hom}{Hom}

\DeclareMathOperator{\Tor}{Tor}
\DeclareMathOperator{\Spec}{Spec}

\DeclareMathOperator{\image}{Im}
\DeclareMathOperator{\im}{Im}

\DeclareMathOperator{\vol}{vol}
\DeclareMathOperator{\In}{in}
\DeclareMathOperator{\Gr}{gr}
\DeclareMathOperator{\type}{type}
\DeclareMathOperator{\trdeg}{tr.deg}

\DeclareMathOperator{\Grass}{Grass}
\DeclareMathOperator{\trace}{Tr}

\DeclareMathOperator{\fsig}{s} 
\DeclareMathOperator{\rsig}{s_{rat}} 
\DeclareMathOperator{\csig}{s_{rel}} 
\DeclareMathOperator{\dsig}{s_{dual}} 
\DeclareMathOperator{\Wsig}{s_{Tr}} 
\DeclareMathOperator{\sWsig}{\widetilde{s_{Tr}}} 
\DeclareMathOperator{\eWsig}{s_{Tr}} 

\newcommand{\cotimes}[1]{%
  \mathbin{\widehat{\mathop{\otimes}_{#1}}}%
}

\newcommand{\hght}{\operatorname{ht}}
\newcommand{\length}{\ell}

\newcommand{\cB}{\mathcal B}

\newcommand{\eh}{\operatorname{e}}
\newcommand{\ehk}{\operatorname{e_{HK}}}
 
\newcommand{\mf}{\mathfrak}
\newcommand{\frq}[1]{{#1}^{[p^e]}}
\newcommand{\diml}{\dim_{\ell}\,}
\newcommand{\fdim}[1]{\dim_{#1}\,}

\newcommand{\ul}[1]{\underline{#1}}

\newcommand{\blank}{\ul{\;\;\;}}

\newtheorem*{statement*}{Statement}
\newtheorem*{fact*}{Fact}
\numberwithin{equation}{theorem}

\newtheorem{step}{Step}



\begin{document}
\title{The theory of F-rational signature}

\author{Ilya Smirnov}
\address{BCAM -- Basque Center for Applied Mathematics, Mazarredo 14, 48009 Bilbao, Spain \quad and \quad
IKERBASQUE, Basque Foundation for Science, Plaza Euskadi 5, 48009 Bilbao, Spain}
\thanks{During the preparation of this manuscript, the first author was partially supported through Starting Grant 2020-03970 by the Swedish Research Council, a fellowship from ``la Caixa'' Foundation (ID 100010434), and the European Union’s Horizon 2020 research and innovation programme under the Marie Sk\l{}odowska-Curie grant agreement No 847648 (fellowship code  ``LCF/BQ/PI21/11830033'').}
\email{ismirnov@bcamath.org}

\author{Kevin Tucker}
\address{Department of Mathematics, University of Illinois at Chicago, Chicago, IL 60607, USA}
\thanks{The second author was supported in part by NSF Grants DMS \#2200716 and \#1602070.}
\email{kftucker@uic.edu}

\begin{abstract}
    F-signature is an important numeric invariant of singularities in positive characteristic that can be used to detect strong F-regularity. One would like to have a variant that rather detects F-rationality, and there are two theories that aim to fill 
this gap: F-rational signature of Hochster and Yao and dual F-signature of Sannai. Unfortunately, several important properties of the original F-signature are unknown for these invariants.

We find a modification of the Hochster--Yao definition that agrees with Sannai's dual F-signature and push further the united theory to achieve a \emph{complete} generalization of F-signature.  
\end{abstract}

\maketitle

\section{Introduction}
\label{intro}
Let $(R, \mf m)$ be a commutative Noetherian local domain 
of positive characteristic $p$.
The world of positive characteristic is driven by 
the Frobenius endomorphism $F\colon R \to R$
defined by $r \mapsto r^p$.
A particular way to study this endomorphism is 
via the family of modules $F_*^e R$ obtained from $R$ by iterated
restriction of scalars, so that $r F_*^e x = F_*^e (r^{p^e} x)$.
Under mild assumptions, satisfied in most arithmetic or geometric settings, these modules are finitely generated; we shall assume this holds throughout the introduction. Kunz proved that these modules
detect regularity \cite{Kunz1}: $F_*^e R$ is free for all $e \in \mathbb{N}$ (or equivalently any $e \in \mathbb{N}$) if and only if $R$ is regular.  This result motivates  a number of numerical measures of singularities in positive characteristic, including F-signature and Hilbert--Kunz multiplicity.

The first of such invariants, the Hilbert--Kunz multiplicity,
was defined by Monsky in 1983 (\cite{Monsky}) as an extension of earlier 
work of Kunz (\cite{Kunz2}).
If $\length(\blank)$ denotes the length over $R$ and the dimension of $R$ is $d$, the Hilbert--Kunz multiplicity of an ideal $I$ with $\length(R/I) < \infty$ is defined as
$
\ehk(I) = \lim\limits_{e \to \infty} \frac{1}{ p ^{ed}} \length(R / I^{[p^e]}  )
$
where $I^{[p^e]} = \langle x^{p^e} \mid x \in I \rangle$ is the expansion of  $I$ over the $e$-iterated Frobenius. Similarly, the F-signature was formally defined by Huneke and Leuschke \cite{HunekeLeuschke} building upon the earlier work of Smith and Van den Bergh \cite{SmithVanDenBerghSimplicityOfDiff} on $R$-module direct sum decomposition of $F^e_*R$. In our setting, it is given by
\[
\fsig (R) = \lim_{e \to \infty} 
\frac{\max \{N \mid \text{there is a surjection $F_*^e R \twoheadrightarrow R^N$} \}}
{\rank F_*^e R}.
\]
Both $\fsig(R)$ and $\ehk(\mf m)$ are 
natural measures of singularity, as they encode asymptotically how far the modules  $F_*^e R$ are from being free. 
An alternate perspective on the F-signature,  pioneered in \cite{WatanabeYoshida2, Yao2} and borne out in \cite{PolstraTucker}, links the two invariants together and characterizes the F-signature as the infimum of all relative Hilbert--Kunz differences
\[
\fsig(R) = \inf
\left \{\ehk(I) - \ehk(\langle I, u\rangle) \mid u \notin I, \length (R/I) < \infty \right   \}.
\]

A crucial property of the F-signature is that it detects strong F-regularity, a class of singularities central to the celebrated theory of tight closure pioneered by Hochster and Huneke \cite{HochsterHuneke1}. (Strong) F-regularity can be viewed as the positive characteristic analogue of Kawamata log terminal singularities important to the minimal model program in higher dimensional complex algebraic geometry \cite{Hara, HaraWatanabe, Smith3}. 
Closely related to F-regularity, F-rationality has long been an important class of singularities in positive characteristic commutative algebra. Classically defined by the property that all ideals $\langle \ul{x} \rangle$ generated by a system of parameters $\ul{x} = x_1, \ldots, x_d$ are tightly closed (\cite{FedderWatanabe}), F-rationality can be interpreted geometrically as a positive characteristic analogue of rational singularities over the complex numbers (\cite{MehtaSrinivas, Smith2, Hara}).  

 Recent years have led to rapid advances in our understanding of the F-signature; focusing on those most relevant to our current purpose, 
we highlight the following five core properties of F-signature.
\begin{enumerate}
\item \label{existence} \textit{Existence}: the limit defining $\fsig(R)$ exists \cite{Tucker}.
\item \label{fregularity} \textit{Detects F-regularity}: $\fsig(R) \geq 0$, and $\fsig(R) > 0$ if and only if $R$ is strongly F-regular \cite[Theorem~0.2]{AberbachLeuschke}.
\item  \label{regularity} \textit{Detects regularity}: $\fsig(R) \leq 1$, and $\fsig(R) = 1$ if and only if $R$ is regular \cite{HunekeLeuschke}.
\item  \label{localization} \textit{Compatible with localization}: $\fsig(R) \leq \fsig(R_\mf p)$ for every prime ideal $\mf p$ \cite[Proposition~1.3]{AberbachLeuschke}.
\item \label{semic} \textit{Semicontinuity}: $\mf p \mapsto \fsig(R_{\mf p})$ is lower semicontinuous on $\Spec R$ \cite{Polstra, PolstraTucker}.
\end{enumerate}

Attempts have been made to find an invariant akin to F-signature which detects F-rationality rather than F-regularity.  The first, due to Hochster and Yao \cite{HochsterYao}, builds on the notion that relative Hilbert--Kunz multiplicity can be used to test for tight closure.  The F-rational signature of $R$,  denoted here by $\rsig(R)$, is defined as
\[
\rsig(R) = \inf
\left \{\ehk(\langle \ul{x} \rangle) - \ehk(\langle\ul{x}, u\rangle) \mid u \notin \langle\ul{x} \rangle, \ul{x} \mbox{ a system of parameters} \right \}
\]
where the infimum is taken over all systems of parameters $\ul{x}$ and
elements $u \notin \langle\ul{x} \rangle$.  When $R$ is Gorenstein, it is straightforward to check that $\rsig(R)$ and $\fsig(R)$ coincide (see \cite{HunekeLeuschke}).   Hochster and Yao show that $\rsig(R) > 0$ if and only if $R$ is F-rational, so that the F-rational signature detects F-rationality and satisfies the analogue of property (\ref{fregularity}) above.  Moreover, interpreted appropriately, one can show the F-rational signature satisfies an analogue of existence (\ref{existence}) as well; this property is particularly important in practice  as it allows for estimation and computation. 
However, a computation of Hochster and Yao (\cite[Example~7.4]{HochsterYao}, see also Remark~\ref{no HY}) shows that $\rsig(R) = 1$ does not determine regularity as in (\ref{regularity}). Moreover, to our knowledge, it is unclear (and perhaps unlikely) that the F-rational signature satisfies analogues of properties (\ref{localization}) and (\ref{semic}) above.

Following the introduction of the F-rational signature, an alternate construction was introduced by Sannai \cite{Sannai} mimicking the original definition of F-signature directly.  Called the dual F-signature of $R$ and denoted here $\dsig(R)$, the invariant is defined as
\[
\dsig(R) = \limsup_{e \to \infty} 
\frac{\max \{N \mid \text{ there is a surjection $F_*^e \omega_R \twoheadrightarrow \omega_R^N$}\}}
{\rank F_*^e \omega_R}
\]
where $R$ is assumed Cohen-Macaulay with a canonical module $\omega_R$. Once again, when $R$ is Gorenstein, it is clear that $\dsig(R)$ and $\fsig(R)$ coincide.  Sannai shows further (relying heavily upon \cite{HochsterYao}) that $\dsig(R) > 0$ if and only if $R$ is F-rational. Moreover, $\dsig(R)$ is known to detect regularity and to be compatible with localization as well, satisfying in total the analogues of properties (\ref{fregularity}), (\ref{regularity}),  and (\ref{localization}) above. However, outside of a small number of examples (\emph{cf.} \cite{Nakajima, HashimotoDual}), it has remained open whether the limit defining the dual F-signature exists. Not only is this problematic when attempting to compute or estimate $\dsig(R)$, it is also at the heart of the difficulty in attempting to show that the dual F-signature defines a lower semicontinuous function on $\Spec(R)$.  Thus, in short, we are left to wonder if  the dual F-signature indeed satisfies the analogue of properties (\ref{existence}) and (\ref{semic}). Note also that we will see in Example~\ref{different} that $\rsig(R)$ can be strictly larger than $\csig(R)$.

In this paper, we bring together the two approaches used above, showing that a modified version of the Hochster--Yao invariant defined via relative Hilbert--Kunz multiplicity agrees with Sannai's dual $F$-signature defined via the maximal numbers of surjections. To that end, we introduce herein \emph{the relative F-rational signature} $\csig(R)$ of $R$:
\[
\csig(R) = \inf_{\langle \ul{x} \rangle \subset I} 
\frac{\ehk(\langle \ul{x} \rangle) - \ehk(I)}
{\length (R/\langle \ul{x} \rangle) - \length (R/I)},
\]
where the infimum is taken over all systems of parameters $\ul{x}$ 
and all ideals $I$ properly containing $\langle \ul{x} \rangle$. Our first main results can be summarized as follows.

\begin{theorem*}[Corollary~\ref{cor dual and relative}, Theorem~\ref{thm dual semicontinuous}]
The limit defining the dual F-signature exists and equals the relative F-rational signature. 
Furthermore, the dual F-signature is lower semicontinuous and therefore satisfies 
\emph{all} five core properties (a)-(e) of F-signature listed above.
\end{theorem*}

\noindent
In the course of showing these results, the equivalence of different perspectives on the $F$-signature plays a prominent role. 
The equivalence $\dsig(R) = \csig(R)$ stated in the theorem
itself requires developing an intricate linear algebra result, 
based on a number of nontrivial matrix computations and inductions, 
which we separated out from the main body of the article in the appendix.
The third perspective arises as a certain dual interpretation of the splitting ideals $I_e$, which were originally defined by Aberbach--Enescu (\cite{AberbachEnescu2}) and Yao (\cite{Yao}),
developed in Section~\ref{dualizing} using the Cartier formalism in the sense of Blickle \cite{Blickle}.


To the extent possible, our goal is to present unified theory of F-rational and dual $F$-signature that is fully parallel to the established theory of $F$-signature. 
The newly observed properties of the dual $F$-signature immediately lead to a number of novel perspectives on previously established results. For example, the core properties imply that the set $\{\mf p \mid \dsig(R_\mf p) > 0\}$ is equal to the F-rational locus of $R$ and is open, so we recover a result of V\'elez \cite[Theorem~1.11]{Velez}. However, we are also able to leverage these properties further --
the equality $\dsig(R) =\csig(R)$ in particular opens the door to the use of sophisticated uniform convergence techniques from Hilbert--Kunz theory to establish a number of new and important results.

\begin{theorem*}
The dual F-signature satisfies the following properties.
\begin{itemize}

\item (Corollary~\ref{cor relative deformation}) the dual F-signature deforms, i.e., 
$\dsig(R) \geq \dsig(R/xR)$ for a regular element $x \in R$. 

\item (Corollary~\ref{flat map}, Corollary~\ref{cor equality geometrically regular}) $\dsig(R) \geq \dsig(S)$ 
for a faithfully flat local map $R \to S$, with equality if the closed fiber is geometrically regular. 


\item (Theorem~\ref{t global}) The global dual F-signature in the sense of \cite{DSPY} exists
and is equal to the minimum of localizations $\dsig(R_\mf p)$.

\item (Theorem~\ref{t second coefficient}) $b_e(R)$ admits a second coefficient, i.e., 
there exists a constant $\beta$ such that
\[
b_e(R) = \dsig(R)p^{ed} + \beta p^{e(d-1)} + O(p^{e(d-2)}).
\]
\end{itemize}
\end{theorem*}

\noindent
In particular, while many of these results again parallel the theory of $F$-signature, in some cases we see that the behavior of the dual $F$-signature is even better. Indeed, $F$-signature (as well as strong $F$-regularity) fails to deform without additional assumptions, and moreover the last result on the second coefficient remains an important open question for the $F$-splitting numbers.

Finally, note that while the dual $F$-signature is undefined when $R$ is not assumed to be $F$-finite, the definitions of both the $F$-rational and relative $F$-rational signature are well-posed. While not the primary aim of this article, we additionally explore and verify a number of desirable properties of the relative $F$-rational signature in this setting.

\begin{theorem*}
\begin{enumerate}
\item $\csig(R) \geq 0$, and $\csig(R) > 0$ if and only if $R$ is F-rational
(Corollary~\ref{cor csig independent}.
\item $\csig(R) \leq 1$, and $\csig(R) = 1$ if and only if $R$ is regular
(Proposition~\ref{singularity}).
\item $\csig(R) \leq \csig(R_\mf p)$ for every prime $\mf p$ (Proposition~\ref{relative localizes}).
\item $\csig(R) \leq \csig(S)$ for a faithfully flat local map $R \to S$ (Corollary~\ref{flat map}).
\item $\csig$ deforms (Corollary~\ref{cor relative deformation}). 
\end{enumerate}
\end{theorem*}

\subsection{Structure of the paper}
After setting up definitions in Section~\ref{preliminaries}, we define and study the  relative F-rational signature in Section~\ref{relative}. 
The results of this section are developed without the F-finite assumption. In particular, 
the analogues of properties (\ref{regularity}, \ref{fregularity}, \ref{localization}) of F-signature are shown to hold for the relative F-rational signature (Corollary~\ref{cor relative vs Frational}, Propositions~\ref{singularity},\ref{relative localizes}). We also present an appropriate inequality for flat extensions in Proposition~\ref{flat map}
and for deformation in Corollary~\ref{cor relative deformation}.
The main technical result of this section is Proposition~\ref{go socle}, which allows one to restrict the computation of the relative F-rational signature to socle ideals for a given system of paramaters. In turn, this also allows us to utilize the results of Hochster--Yao and show independence of a given system of parameters as well (Corollary~\ref{cor csig independent}). Combined with the semicontinuity result of \cite{SmirnovAffine}, we also deduce here that the relative F-rational signature is a minimum (rather than infimum) of relative Hilbert--Kunz multiplicities. This gives a new proof independent of \cite[Section~3]{HochsterYao} showing that the positivity of either $\rsig(R)$ or $\csig(R)$ detect F-rationality.

In Section~\ref{dualizing} we give an equivalent definition of the relative $F$-rational signature based on the Cartier operator on the canonical module 
(Definition~\ref{def Cartier}) in the $F$-finite setting. The importance of this perspective comes in part by allowing one to view the relative $F$-rational signature as the minimum of numerical function on the $k$-rational points on a Grassmann variety. This leads to further study of a technical extension of the relative rational signature that takes into account the relative Hilbert--Kunz multiplicities determined by the non $k$-rational points as well, with a view towards properties such as semicontinuity where it is natural to consider behavior at all geometric points.
We will show existence, uniform convergence, and semicontinuity of this generalized invariant. These proofs are somewhat novel and are likely of independent interest. Inspired by \cite{Smirnov}, our method is roughly based on two steps:
we first apply uniform convergence methods introduced in \cite{Tucker} to translate the problem to a more tractable invariant, and second we use the semicontinuity of the rank of a continuous matrix-valued function on a vector bundle. Restricting back to $k$-rational points gives a number of alternative arguments in the non-generalized setting as well. For instance, an alternative proof of the fact that the relative F-signature achieves its minimum via these methods is given (Corollary~\ref{infimum achieved}). 

In Section~\ref{semi} we apply the intricate linear algebra machinery of Appendix~\ref{linear algebra} to show that all of
the different points of view give equal invariants, and in particular 
$\csig(R) = \dsig(R)$. 
This allows one to exploit all of our techniques together at once and establish the fundamental properties of the dual F-signature highlighted above: its existence as a limit (Corollary~\ref{cor dual exists}), its semicontinuity (Theorem~\ref{thm dual semicontinuous}), existence of the second coefficient (Theorem~\ref{t second coefficient}), invariance 
under regular morphisms (Corollary~\ref{cor equality geometrically regular}), 
and also the local-to-global property (Theorem~\ref{t global}). 
We note that our proof of the existence of the global dual F-signature
utilizes semicontinuity more efficiently than \cite{DSPY} and can be used to significantly shorten it. 

Finally, in Section~\ref{Toric} we present an approach for computing the F-rational signature of a toric singularity, and we finish the paper with some of the remaining open questions. The appendix contains the crucial linear algebra machinery. 

\subsection*{Acknowledgements}
We are indebted to 
Pham Hung Quy for finding and correcting a mistake in an earlier version of Proposition~\ref{relative localizes}. 
We thank Mel Hochster and Yongwei Yao for many fruitful discussions.

\section{Preliminaries}\label{preliminaries}

\subsection{F-rational singularities}

In \cite{FedderWatanabe}  Fedder and Watanabe defined a local ring $(R, \mf m)$ to be \emph{F-rational} if every parameter ideal is tightly closed, in parallel to the notion of \emph{weakly F-regular}, due to Hochster and Huneke in \cite{HochsterHuneke1}, that asks every ideal to be tightly closed. 
F-rational rings are normal and an F-rational ring which is an image of Cohen-Macaulay ring is Cohen-Macaulay itself.
The  theory of F-rational rings was further developed in \cite{HochsterHuneke2}.

It was shown in \cite[Theorem 8.17]{HochsterHuneke1} 
that tight closure is determined by Hilbert--Kunz multiplicity under mild assumptions.
Thus we can restate F-rationality using so called \emph{relative Hilbert--Kunz multiplicity}. We provide a proof to illustrate where the assumptions are used.

\begin{proposition}\label{prop HK frat}
Let $(R, \mf m)$ be a local ring. If $\ehk(\langle \ul{x} \rangle) > \ehk(I)$ for every system of parameters $\ul{x}$ and every ideal $\ul{x} \subsetneq I$,
then $R$ is F-rational. 
Moreover, if $R$ is excellent (more generally, it suffices that $R$ and $\widehat{R}$ have a common parameter test element; see also \cite[Proposition~3.2.2]{DattaTucker}) then the converse holds. 
\end{proposition}
\begin{proof}
If $R$ is not F-rational, then there exists a system of parameters which is not tightly closed, say $a \in \langle \ul{x} \rangle^* \setminus \langle \ul{x} \rangle$.
Hence, $\ehk(\langle \ul{x} \rangle) = \ehk(I)$ by \cite[Theorem 8.17]{HochsterHuneke1}, which requires no assumptions. 

If $R$ is complete and F-rational, then, since an F-rational ring is a domain, 
the assumptions of the converse in \cite[Theorem 8.17]{HochsterHuneke1}
are satisfied, thus $\ehk(\langle \ul{x} \rangle) > \ehk(I)$ for all 
$\ul{x} \subsetneq I$. 
Since Hilbert--Kunz multiplicity is not affected by completion, 
we obtained that the relative Hilbert--Kunz condition determines whether $\widehat{R}$
is F-rational. The excellence (or the weaker assumption) is needed to descend F-rationality from $\widehat{R}$ to $R$, see the discussion in \cite[Proof of Theorem~4.1]{HochsterYao}.
\end{proof}

The definitions of F-rational signature \cite{HochsterYao} and normalized F-rational signature (Section~\ref{relative})
are motivated by this equivalence. In \cite{Smith} Karen Smith restated F-rationality using tight closure in local cohomology,
as observed by Hochster--Yao the relative Hilbert--Kunz multiplicity can be also moved to local cohomology, 
see Proposition~\ref{HYprop}. 

Motivated by the notion of \emph{strong F-regularity}, Vel{\' e}z gave definition of 
\emph{strong F-rationality} \cite{Velez}. However, while the equivalence of strong and weak F-regularity 
is a long-standing conjecture, Velez  showed (\cite[Lemma~1.3, Proposition~1.6]{Velez}) that the two versions of F-rationality agree for 
F-finite domains, the assumption that we will impose from Section~\ref{dualizing}. Note that an F-finite ring is excellent \cite{Kunz2}
and is a quotient of a regular ring by \cite{Gabber},
thus it has a dualizing module: a maximal Cohen-Macaulay module 
of finite injective dimension and Cohen-Macaulay type $1$. We refer to the Bruns--Herzog book \cite[3.3]{BrunsHerzog} for properties of dualizing (canonical) modules. 

\begin{theorem}[Vel{\' e}z]
Let $(R, \mf m)$ be a Cohen-Macaulay F-finite reduced local ring and $\omega_R$ be its dualizing module. 
Then $R$ is F-rational if and only if for every $0 \neq c \in R$ 
there is $e \geq 1$ and a homomorphism $\phi\colon F_*^e \omega_R \to \omega_R$ such that
$\phi(F_*^e (c \omega_R)) = \omega_R$. 
\end{theorem}

\subsection{Semicontinuity}
We recall  that function $f \colon X \to \mathbb R$ on a topological space $X$ 
is lower semicontinuous if for any $a \in \mathbb R$ the set $
\{x \in X \mid f(x) > a\}
$
is open.
Semicontinuity is an essential property of a singularity invariant for multiple reasons
and we will present multiple consequences of semicontinuity in this paper. 
Let us start with several fundamental properties. 

\begin{theorem}\label{thm semicontinuity properties}
Let $f$ be a lower semicontinuous function on a topological space $X$. The following properties hold. 
\begin{enumerate}
    \item\label{sem prop 1} If $X$ is quasi-compact, then $f$ has a minimum.
    \item\label{sem prop 2} If $X$ is Noetherian, then a lower semicontinuous function satisfies a descending chain condition, i.e., the set of its values does not contain infinite strictly decreasing sequences.
    \item\label{sem prop 3} In particular, if $X$ is Noetherian, then the minimum of $f$ is separated, i.e., there exists $\varepsilon > 0$ such that for any $y \in X$ if $f(y) - \min_{x \in X} f(x) < \varepsilon$,
    then $f(y) = \min_{x \in X} f(x)$.
    \item\label{sem prop 4} If $X$ is a Noetherian $k$-scheme then $f$ attains a minimum on $\ell$-rational points for any $k \subseteq \ell$. This minimum is also separated. 
\end{enumerate}
\end{theorem}
\begin{proof}
(\ref{sem prop 1}) The ordered family of sets $\{x \in X \mid f(x) > a\}$ forms an open cover of $X$, thus 
the quasi-compactness assumption implies 
that there exists a minimal $a_0$ such that 
$\{x \in X \mid f(x) > a\}\neq X$. This $a_0$ is the minimum of $f$.

(\ref{sem prop 2}) Suppose that $a_1 = f(x_1) > a_2 = f(x_2) > \cdots > a_i = f(x_i) > \cdots$ is an infinite decreasing sequence. Then $X_i := \{x \in X \mid f(x) >  a_i\}$ form an increasing chain of open sets, but any such chain must stabilize
because $X$ is Noetherian.  Hence, $x_{i} \in X_i = X_{i - 1}$ for $i \gg 0$, a contradiction.

(\ref{sem prop 3}) Since there is no infinite decreasing sequences, there is the second smallest value. 

(\ref{sem prop 4}) The set of values on $\ell$-rational points has a minimum because it cannot contain an infinite decreasing sequence.
\end{proof}

See Corollary~\ref{cor csig independent} for an important application of (\ref{sem prop 4}).

\subsubsection{An important example}
A standard example of an upper semicontinuous function (e.g., \cite[Lemma~2.2]{PolstraTucker}) is the minimal number of generators of a finitely generated $R$-module $M$:  
$\mf p \mapsto \dim_{k(\mf p)} (M \otimes_R k(\mf p))$ defines an upper semicontinuous function on $\Spec R$.
From this example we can build more. 
For example, if $A$ is an artinian $k$-algebra of finite length 
and $J$ is an ideal of $A[\ul{X}]$, where $\ul{X}$ is a \emph{set} of variables, then $\mf p \mapsto \length (A[\ul{X}]/J \otimes_{A[\ul{X}]} k(\mf p))$
defines an upper semicontinuous function on $\Spec k[\ul{X}]$.

Later, we will need a semicontinuity result on the Grassmannian. This was observed in \cite[Remark~4.17]{SmirnovAffine}, but its uniform convergence machinery requires $R$ to be a finitely generated $k$-algebra. Instead, we may use that the Grassmannian parametrizes ideals in $R$, so we get uniform convergence in much easier way. 

\begin{theorem}\label{thm: concrete grassmannian}
Let $(R, \mf m)$ be a local ring of characteristic $p > 0$ and $I$ be an $\mf m$-primary ideal.  Let $V = (I :_R \mf m)/I$ be the socle of $I$ and for any subspace $U \subseteq V$ denote $J_U = (I, U)$, the corresponding socle ideal. 
Then the function 
\[
U \mapsto \frac{1}{\dim_k U} \left(\ehk (I) - \ehk (J_U)\right) 
\]
is lower semicontinuous on the $k$-rational points of the Grassmannian of $X$. 
\end{theorem}
\begin{proof}
First, let us fix $e$ and show that the function 
\[
U \mapsto \length \left (R/J_U^{[p^e]}\right ) 
\]
is upper semicontinuous on the $k$-rational points of the Grassmannian of rank $r$ subspaces of $V$. This is a local question, so we may cover the Grassmannian by affine patches. After choosing a basis $e_1, \ldots, e_N$ of $V$, 
a patch is given by setting a fixed maximal minor of the generic $r \times N$-dimensional matrix to be the identity, the remaining $r \times (N - r)$ entries are coordinates. Let us organize these entries in a generic matrix $X$. 
Without loss of generality, the non-vanishing minor is the top one:
\[\begin{bmatrix}
 1 & 0 & \cdots  & 0\\
 0 & 1 & \cdots  & 0\\
 \vdots & \vdots & \vdots & \vdots\\
 0 & 0 & \cdots & 1\\
 X_{1, 1} & X_{1,2} & \cdots  & X_{1,r}\\
 \vdots & \vdots & \vdots &  \vdots\\
 X_{N-r, 1} & X_{N-r,2} & \cdots  & X_{N-r,r}
\end{bmatrix}.
\]

Let $A$ denote the artinian ring $R/(I^{[p^e]})$. We may choose a coefficient field $k$ for $A$. 
The ideal $J_U^{[p^e]}$ is obtained by specializing $X_{i, j}$
in the ideal
\[
A[X] \supset J^{[p^e]} = \left (e_1 + \sum_{i = 1}^{N - r} X_{1, i}^{p^e} e_{r + i}, \ldots, e_r + \sum_{i = 1}^{N-r} X_{r, i}^{p^e} e_{r + i} \right).
\]
Thus the function $U \mapsto \length \left (R/J_U^{[p^e]}\right )$
is upper semicontinuous, note that the restriction to $k$-rational closed points is still semicontinuous since
this set is equipped with the induced topology. 

Second, we recall that \cite[Theorem~3.6]{Tucker} gives uniform convergence: there exists a constant $C > 0$ and a positive integer $e_0$  such that for any ideal $J \supseteq I$ and any positive integer $e$ 
\[
\left |\ehk (J) -  \frac{1}{p^{(e + e_0)d}}\length (R/J^{[p^{e + e_0}]})\right | 
\leq C/p^e.
\]
This implies uniform convergence of 
\[\length  \left (\frac{J_U^{[p^e]}}{I^{[p^e]}} \right ) = 
\length (R/I^{[p^e]}) - 
\length (R/J_U^{[p^e]})
\]
to the relative Hilbert--Kunz multiplicity. 
It follows that 
\[
U \mapsto \frac{1}{\dim_k U} \left(\ehk (I) - \ehk (J_U)\right) 
\]
is lower semicontinuous as the uniform convergent limit of lower semicontinuous functions. 
\end{proof}

\section{Relative F-rational signature}
\label{relative}
\begin{definition}
Let $(R, \mf m)$ be a local ring.  
\begin{enumerate}
    \item The \emph{F-rational signature} of $R$
is defined as 
\[
\rsig(R) = \inf_{\ul{x} \subset I} \left\{ \ehk(\langle \ul{x}\rangle ) - \ehk(I)\right\}
\]
where the infimum is taken over all systems of parameters $\ul {x}$ and ideals $\langle\ul{x}\rangle \subset I$. 
\item
The \emph{relative F-rational signature} of $R$ is 
\[
\csig(R) = \inf_{\ul{x} \subset I} 
\frac{\ehk(\langle\ul{x}\rangle) - \ehk(I)}{\length (R/\langle\ul{x}\rangle) - \length(R/I)}
\]
where the infimum is taken over all systems of parameters $\ul {x}$ and ideals $\langle\ul{x}\rangle \subset I$. 
\end{enumerate}

\end{definition}

F-rational signature was defined by Hochster and Yao in \cite{HochsterYao}.
Clearly, if $\rsig(R) > 0$ or $\csig(R) > 0$, then $R$ is F-rational by Proposition~\ref{prop HK frat}. However, as its name indicates, the converse also holds
under the assumptions of Proposition~\ref{prop HK frat} (\cite[Theorem~4.1]{HochsterYao}).  
In (1) above, it is enough to consider any fixed system of parameters \cite[Theorem~2.4]{HochsterYao}
and it is easy to see that one can restrict to socle ideals $I = \langle \ul{x}, u \rangle$, 
where $\langle\ul{x}\rangle: u = \mf m$ (\textit{cf.} \Cref{cor csig independent} for a similar result for relative F-rational signature). Though the difference in the two definitions would seem small, the additional normalizing factor in the definition of relative F-rational signature is quite useful and leads to a number of desirable properties that are unknown (if not false) for F-rational signature.

\begin{example}\label{different}
It is easy to find examples where $\rsig(R) \neq \csig(R)$ in the toric case, see Section~\ref{Toric}. Explicitly, let $V_n$ be the $n$th Veronese subring of $k[[x,y]]$.
Hochster and Yao (\cite[Example~7.4]{HochsterYao}) 
computed that $\rsig(V) = 1 - \frac 1n$.
On the other hand, one can see that the relative Hilbert--Kunz multiplicity 
for the entire socle is $1/2$, so $\rsig(V_n) > \csig(V_n)$ for $n \geq 3$.

Namely,  if we take a system of parameters $x^n, y^n$ then the whole maximal ideal $x^n, x^{n-1}y, \ldots, y^n$ is the socle. We compute the Hilbert--Kunz multiplicity by passing to $S = k[[x,y]]$:
\[
\ehk (\mf m, V_n) = \frac{\length (S/\langle x,y\rangle^n)}{[S: V_n]} = \frac{\binom{n + 1}{2}}{n} = \frac{n + 1}{2}.
\]
Since
$
\ehk (\langle x^n, y^n\rangle, V_n) = \length (V/\langle x^n, y^n\rangle ) = n,
$
we get that
\[
\frac{\ehk (\langle x^n, y^n\rangle, V_n) - \ehk (\mf m, V_n)}{\length (\mf m/\langle x^n, y^n\rangle )} 
= \frac{1}{2}.
\]

In \cite[Example~3.17]{Sannai} Sannai has computed that the dual F-signature of any Veronese
subring of $k[[x,y]]$ is $1/2$. 
Using Theorem~\ref{thm just comparison}, this will show that, in fact,  
 $\csig(V_n) = 1/2$.
\end{example}

\subsection{Measuring singularities}

As a first step, we record that the relative F-rational signature is normalized so as to detect singularity. The original F-rational signature does not detect singularities (Remark~\ref{no HY}). It is also not known whether it is bounded above by $1$ (which we suspect may be false).

\begin{proposition}
\label{singularity}
If $(R, \mf m)$ is a formally unmixed local ring, then $\csig(R) \leq 1$ with equality if and only if $R$ is regular.
\end{proposition}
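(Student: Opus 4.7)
The plan is to analyze the single pair $(\langle\ul{x}\rangle,\mf m)$, where $\ul{x}$ is any system of parameters with $\langle\ul{x}\rangle\subsetneq\mf m$. Such a pair always exists: if $R$ is not regular, then the minimal number of generators of $\mf m$ exceeds $d=\dim R$, forcing every system of parameters to be strictly contained in $\mf m$; if $R$ is regular, one may simply replace one generator of $\mf m$ by a higher power. The defining ratio for this pair is
\[
\frac{\ehk(\langle\ul{x}\rangle)-\ehk(\mf m)}{\length(R/\langle\ul{x}\rangle)-1},
\]
and I will bound it by $1$, with equality forcing $R$ to be regular.

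Rearranging ``ratio $\leq 1$'' to
\[
\length(R/\langle\ul{x}\rangle)-\ehk(\langle\ul{x}\rangle)\;\geq\;1-\ehk(\mf m)
\]
cleanly separates the two sides. The right-hand side is nonpositive by the Watanabe--Yoshida lower bound $\ehk(\mf m)\geq 1$, and the left-hand side is nonnegative in a formally unmixed ring. For the latter I would invoke the identity $\ehk(\langle\ul{x}\rangle)=\eh(\langle\ul{x}\rangle)$ for parameter ideals (via additivity of Hilbert--Kunz and Hilbert--Samuel multiplicities over the minimal primes of the completion, all of full dimension under the formally unmixed hypothesis), combined with Lech's classical inequality $\eh(\langle\ul{x}\rangle)\leq\length(R/\langle\ul{x}\rangle)$. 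This yields $\csig(R)\leq 1$.

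Both directions of the equality characterization then follow from the same estimate. If $R$ is regular, flatness of Frobenius gives $\length(R/J^{[p^e]})=p^{ed}\length(R/J)$ for every $\mf m$-primary ideal $J$, so $\ehk(J)=\length(R/J)$; every ratio in the defining infimum is therefore exactly $1$, and hence $\csig(R)=1$. Conversely, if $R$ is formally unmixed but not regular, Watanabe--Yoshida strengthens to $\ehk(\mf m)>1$, which makes the rearranged inequality above strict for the pair $(\langle\ul{x}\rangle,\mf m)$, and thus $\csig(R)<1$.

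The main obstacle is justifying the parameter-ideal identity $\ehk(\langle\ul{x}\rangle)=\eh(\langle\ul{x}\rangle)$ in formally unmixed rings, which is where the hypothesis does its work. The cleanest route passes through the associativity formulae for both multiplicities and reduces to the complete local domain case, where the identity follows by comparing Frobenius-power lengths with ordinary multiplicities via a Noether normalization. Modulo this input, the remainder of the argument is the short chain of algebraic inequalities sketched above.
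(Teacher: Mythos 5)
Your proposal is correct and takes essentially the same route as the paper: both hinge on the chain $\ehk(\langle\ul{x}\rangle)=\eh(\langle\ul{x}\rangle)\leq\length(R/\langle\ul{x}\rangle)$ for parameter ideals, the Watanabe--Yoshida bound $\ehk(\mf m)\geq 1$ with its rigidity characterization of regularity, and flatness of Frobenius for the regular case. The paper phrases the argument contrapositively (assume $\csig(R)\geq 1$ and derive $\ehk(\mf m)=1$), while you argue directly that the single test ratio for $(\langle\ul{x}\rangle,\mf m)$ is at most $1$; this is a cosmetic rearrangement of the same inequalities, though your explicit observation that a valid test pair $\langle\ul{x}\rangle\subsetneq\mf m$ always exists is a small point the paper leaves implicit.
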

\begin{proof}
Suppose that $\csig(R) \geq 1$. It follows from the definition that 
\[
\frac{\ehk(\langle\ul{x}\rangle) - \ehk(\mf m)}{\length (R/\langle\ul{x}\rangle) - \length (R/\mf m)} \geq 1.
\]
for any system of parameters $\ul{x}$.
Since $\length (R/\mf m) = 1$,
we obtain that
\[
1 \geq \ell(R/\langle \ul{x} \rangle ) - \ehk(\langle \ul{x} \rangle) + \ehk(\mf m) \geq \ehk(\mf m).
\]
Because $\ehk (\mf m) \geq 1$ and
$\ehk(\langle\ul{x}\rangle) = \eh(\langle\ul{x}\rangle) \leq 
\length (R/\langle\ul{x}\rangle)$, where the former holds by Lech's lemma \cite[Theorem~2]{Lech} and the latter by \cite[Proposition~11.1.10]{HunekeSwanson},
the above inequality implies that $\ehk (\mf m) = 1$ and $R$ is regular by a result of Watanabe and Yoshida 
\cite[Theorem~1.5]{WatanabeYoshida}. 

The converse follows by noting that $\ehk(I) = \length (R/I)$ for any $\mf m$-primary ideal $I$ of a regular local ring $R$, so $\csig(R) = 1$.
\end{proof}


The same idea can be also used to show that $R$ has mild singularities assuming $\csig(R)$ is sufficiently close to one.

\begin{corollary}\label{mild singularity}
Let $(R, \mf m)$ be a formally unmixed local ring with an infinite residue field.
\begin{enumerate}
\item 
If $\csig(R) \geq 1 - \frac{\max\left\{ \frac 1{d!}, \frac 1{\eh(R)}\right\}}{\eh(R) - 1}$,
then $R$ is weakly F-regular.
\item 
If $\csig(R) \geq 1 - \frac 1{(\eh(R) - 1)^2}$, then
$R$ is Gorenstein and F-regular.
\end{enumerate}
\end{corollary}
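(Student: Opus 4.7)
The strategy for both parts is to convert the lower bound on $\csig(R)$ into an upper bound on $\ehk(\mf m)$ by applying the definition of $\csig(R)$ to a minimal reduction of $\mf m$, and then to invoke the classification theorems for local rings of small Hilbert--Kunz multiplicity. Since the residue field is infinite, we can choose a system of parameters $\ul x$ such that $\langle \ul x \rangle$ is a minimal reduction of $\mf m$, giving $\eh(\langle \ul x \rangle) = \eh(\mf m) = \eh(R)$. Formal unmixedness allows the associativity formula for Hilbert--Kunz multiplicity on systems of parameters to identify $\ehk(\langle \ul x \rangle) = \eh(\langle \ul x \rangle) = \eh(R)$, and Lech's inequality gives $\length(R/\langle \ul x \rangle) \geq \eh(R)$. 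If $\eh(R) = 1$ then $R$ is already regular and both conclusions are immediate, so we may assume $\eh(R) \geq 2$.

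Applying the definition of $\csig$ to the proper containment $\langle \ul x \rangle \subsetneq \mf m$ and using $\length(R/\mf m) = 1$, we obtain
\[
\csig(R) \;\leq\; \frac{\ehk(\langle \ul x \rangle) - \ehk(\mf m)}{\length(R/\langle \ul x \rangle) - 1} \;\leq\; \frac{\eh(R) - \ehk(\mf m)}{\eh(R) - 1},
\]
where the second step uses $\length(R/\langle \ul x\rangle) - 1 \geq \eh(R) - 1$ together with $\ehk(\mf m) \leq \ehk(\langle \ul x\rangle) = \eh(R)$. Rearranging yields the key estimate
\[
\ehk(\mf m) - 1 \;\leq\; (1 - \csig(R))\cdot(\eh(R) - 1).
\]

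For part (1), the hypothesis substitutes directly to give $\ehk(\mf m) \leq 1 + \max\{1/d!,\,1/\eh(R)\}$; by the Blickle--Enescu and Aberbach--Enescu classification of unmixed local rings with small Hilbert--Kunz multiplicity, this forces $R$ to be weakly F-regular. For part (2), the same substitution yields $\ehk(\mf m) \leq 1 + 1/(\eh(R)-1)$, at which point the companion Aberbach--Enescu-type bound characterizing Gorenstein F-regular rings completes the argument. The degenerate case $\eh(R) = 2$ (where the previous bound becomes vacuous) can be handled separately: the strict positivity $\csig(R) > 0$ implies F-rationality by \Cref{F-rationality} (and hence Cohen--Macaulayness), and the classical theorem that Cohen--Macaulay rings of multiplicity two are Gorenstein then combines with F-rationality to deliver Gorenstein F-regularity.

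The main obstacle will be matching the non-strict bounds on $\ehk(\mf m)$ derived here against the precise statements in the literature, which are commonly phrased with strict inequalities and under slightly different unmixedness hypotheses, so the equality case must be treated carefully; once reconciled, the remainder of the argument is a clean reduction via minimal reductions, the associativity formula, and Lech's inequality.
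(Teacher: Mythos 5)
Your proof takes the same route as the paper: choose a minimal reduction of $\mf m$ so that $\ehk(\langle\ul x\rangle)=\eh(\langle\ul x\rangle)=\eh(R)$, apply the same estimates as in \Cref{singularity} (namely $\eh(\langle\ul x\rangle)\le\length(R/\langle\ul x\rangle)$ and $\length(R/\mf m)=1$) to derive $\ehk(\mf m)-1\le(1-\csig(R))(\eh(R)-1)$, and then plug in the two choices of $\varepsilon=1-\csig(R)$ and cite Aberbach--Enescu, Corollaries~3.5 and~3.6. This is exactly what the paper does.

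One small caution about your proposed patch for $\eh(R)=2$ in part~(2): there the hypothesis degenerates to $\csig(R)\ge 1-\frac{1}{1}=0$, which is vacuous, so you cannot conclude $\csig(R)>0$ and the detour through \Cref{F-rationality} does not apply. This edge case lies with the statement of the corollary itself (the paper leaves it implicit in the citation), so it is not a defect created by your reduction, but the fix you sketch needs a stronger hypothesis than you actually have.
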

\begin{proof}
Take a system of parameters $\ul{x}$ that forms a minimal reduction of $\mf m$, so that $\eh (\langle\ul{x}\rangle) = \eh(R)$.
Suppose that $\csig(R) \geq 1 - \varepsilon$ for some $\varepsilon > 0$. It follows from the definition that
\[
\frac{\ehk(\langle\ul{x}\rangle) - \ehk(\mf m)}{\length (R/\langle\ul{x}\rangle) - \length (R/\mf m)} \geq 1 - \varepsilon.
\]
Following the method of proof in \Cref{singularity}, we obtain
\[
\ehk (R) \leq 1 + \varepsilon (\eh(R) - 1).
\]
The desired result now follows from that of Aberbach and Enescu \cite[Corollaries~3.5, 3.6]{AberbachEnescu1}, which makes use of the expressions for $\varepsilon$
appearing in statements (1) and (2).
\end{proof}

The residue field assumption can be removed once we establish Corollary~\ref{c relative transcendental}.

\subsection{Reduction to socle ideals}

We want to prove that it is enough to take only socle ideals in the definition of $\csig(R)$.
This reduction is at the core of the theory and will allow 
to fix a system of parameters in the definition of $\csig(R)$.

\begin{proposition}\label{go socle}
Let $(R, \mf m)$ be a local ring of characteristic $p > 0$. 
Then for any $\mf m$-primary ideal $I$,
any ideal $J \supsetneq I$,
and any element $x \in \mf m$
there exists an ideal $J'$ such that $I \subsetneq J' \subseteq J$, $xJ' \subseteq I$, 
and
\[
\frac{\ehk (I) - \ehk(J)}{\length (R/I) - \length (R/J)} \geq 
\frac{\ehk (I) - \ehk(J')}{\length (R/I) - \length (R/J')}.
\]
\end{proposition}
\begin{proof}
Since both ideals are $\mf m$-primary, there is 
an integer $m$ such that $x^{m + 1} J \subseteq I$.
We will prove the claim by induction on $m$, with the trivial base case of $m = 0$.

By our assumption the multiplication by $x$
induces the exact sequence
\[
0 \to \frac{I :_J x}{I}
\to \frac{J}{I}
\to \frac{I + xJ}{I}
\to 0.
\]
By applying the containment
$(I :_J x)^{[p^e]} \subseteq 
\frq{I} :_{\frq{J}} x^{[p^e]}
$ to the exact sequence
\[
0 \to 
\frac{\frq{I} :_{\frq{J}} x^{[p^e]}}{\frq{I}}
\to \frac{\frq{J}}{\frq{I}}
\to \frac{\frq{I} + x^{p^e}\frq{J}}{\frq{I}}
\to 0
\]
we get after taking limits that 
\[
\ehk (I) - \ehk(J) \geq
\ehk (I) - \ehk(I :_J x) + 
\ehk (I) - \ehk(I + xJ).
\]
Thus it follows from the inequality 
$\frac{a + c}{b +d }\geq \min \left (\frac ab, \frac cd \right)$ that
\begin{align*}
\frac{\ehk (I) - \ehk(J)}{\length (R/I) - \length (R/J)}
&\geq 
\frac{\ehk (I) - \ehk(I + xJ) + \ehk (I) - \ehk(I :_J x)}{\length (R/I) - \length (R/J)}\\
&\geq \min \left\{ 
\frac{\ehk (I) - \ehk(I + xJ)}{\length (R/I) - \length (R/(I + xJ))}, 
\frac{\ehk (I) - \ehk(I :_J x)}{\length (R/I) - \length(R/I :_J x)}
\right\}.
\end{align*}
Depending on the minimizer, either $J' = I +  xJ$ satisfies the assertion or we apply the induction hypothesis to
$I :_J x$ and find $J \subset I :_J x \subset J$ such that $xJ' \subseteq I$.
\end{proof}

We will now show that one can fix a system of parameters in the definition of $\csig(R)$
using the machinery built by Hochster and Yao. 
First, recall that the Peskine--Szpiro functor of a module $M$ is defined 
as a module such that 
$F_*^e F^e(M) \cong M \otimes_R F_*^e R$ as $F_*^e R$-modules. 
If $L \subseteq H$, we will use $L_H^{[p^e]}$ to denote the image of 
$F^e (L)$ in $F^e (H)$. 
The following proposition combines \cite[Proposition~2.3]{HochsterYao} and the proof of  
\cite[Theorem 2.4]{HochsterYao}.

\begin{proposition}\label{HYprop}
Let $(R, \mf m)$ be a local Cohen-Macaulay ring of prime
characteristic $p > 0$. If we denote $H = \lc^d_\mf m (R)$,
then
\begin{enumerate}
\item for every system of parameters $\ul x$ of $R$ and ideal $I$ such that 
$\langle \ul x\rangle \subset I$ 
there exists a submodule $L$ of $H$ isomorphic to $I/\langle\ul{x}\rangle$,
\item given a finite length submodule $L$ of $H$ there always exists a system of parameters 
$\ul x$ and ideal $I$, $\langle\ul x\rangle \subseteq I$, such that $I/\langle\ul {x}\rangle \cong L$.
\item if $L \subseteq 0:_H \mf m$, the socle of the top local cohomology, 
then such $I$ exists for any system of parameters $\ul{x}$.
\end{enumerate}

Moreover, via this identification we have
$\length (L_H^{[p^e]}) = \length (\frq{I}/\frq{\langle\ul{x}\rangle})$. In particular, 
\[
\lim_{e \to \infty} \frac{\length (L_H^{[p^e]})}{p^{e\dim R}} = \ehk (\langle\ul{x}\rangle) - \ehk(I).
\]
\end{proposition}

This proposition allows to consider F-rational signature as an invariant of the top local cohomology module
\[
\rsig(R) = \inf_{L \subseteq H} \left( \lim_{e \to \infty} \frac{\length (L_H^{[p^e]})}{p^{e\dim R}}\right),
\]
where the infimum is taken over all nonzero finite length submodules of $H$ and immediately shows 
that $\rsig(R)$ is independent
of a system of parameters. 
Using Proposition~\ref{go socle} we will apply the same argument to $\csig(R)$.

\begin{corollary}\label{cor csig independent}
Let $(R, \mf m)$ be a formally equidimensional local ring of characteristic $p > 0$. Then
for any system of parameters $\ul{x}$ we have
\[
\csig(R) = \inf \left \{\frac{\ehk (\langle\ul{x}\rangle) - \ehk(I)}{\length (R/\langle\ul x\rangle ) - \length (R/I)} \mid \langle\ul{x}\rangle \subset I \subseteq \langle \ul{x}\rangle :\mf m \right \}.
\]

Moreover, the infimum in the definition is, in fact, a minimum. Hence, 
if $R$ is excellent, then $\csig (R) > 0$ if and only if $R$ is F-rational.
\end{corollary}
\begin{proof}
We note that the right-hand side does not change under completion, while the left-hand side can only decrease.
Hence we assume that $R$ is complete. If $R$ is not Cohen-Macaulay, then it is enough to show that the 
right-hand side is $0$. But by the colon-capturing \cite[Theorem~7.15(a)]{HochsterHuneke1}
$\langle\ul{x}\rangle$ is not tightly closed, so we can use $I = \langle\ul{x}\rangle^*$ to get $0$ by \cite[Theorem 8.17]{HochsterHuneke1}.

Let $J$ be an arbitrary ideal containing a system of parameters $\ul{x}$.
If $\langle m_1, \ldots, m_k \rangle = \mf m$, then, after applying Proposition~\ref{go socle} $k$ times, we obtain the ideal $I$ such that
$\mf m I = \langle m_1, \ldots, m_k\rangle I \subseteq \langle \ul{x} \rangle$
and 
\[
\frac{\ehk (\langle\ul{x}\rangle) - \ehk(J)}{\length (R/\langle\ul x\rangle) - \length (R/J)} \geq 
\frac{\ehk (\langle\ul{x}\rangle) - \ehk(I)}{\length (R/\langle\ul x\rangle) - \length (R/I)}.
\]
This reduces $\csig(R)$ to socle ideals. However, after fixing $\ul{x}$, Proposition~\ref{HYprop} gives that for $H = \lc^d_\mf m (R)$ we have
\[  
\inf \left \{\frac{\ehk (\langle\ul{x}\rangle) - \ehk(I)}{\length (R/\langle\ul x\rangle ) - \length (R/I)} \mid 
\langle\ul{x}\rangle \subset I \subseteq \langle \ul{x}\rangle :\mf m \right \} 
= \inf \left \{ \frac{1}{\length (L)} \lim_{e \to \infty} \frac{\length (L_{H}^{[p^e]})}{p^{e\dim R}}
\mid 0 \neq L \subseteq 0 :_H \mf m
\right\},
\]
so the left side is independent of $\ul{x}$ and must be equal to $\csig(R)$.

Since $\ul{x}$ can be fixed, the existence of the minimum follows from semicontinuity in Theorem~\ref{thm: concrete grassmannian}
and Theorem~\ref{thm semicontinuity properties}.
We may now use Proposition~\ref{prop HK frat} to characterize F-rationality.
\end{proof}

\begin{corollary}\label{cor relative deformation}
Let $(R, \mf m)$ be a local ring of characteristic $p > 0$
which is a homomorphic image of a Cohen-Macaulay ring.
If $x$ is a regular element, then $\csig(R) \geq \csig(R/xR)$.
\end{corollary}
\begin{proof}
We may assume that $\csig(R/xR) > 0$, so $R/xR$ is F-rational 
and $R$ must be Cohen-Macaulay.
We complete $x$ to a system of parameters $x, \ul{y}$. 
Because $R$ is Cohen-Macaulay,
\[
\ehk(\langle x,\ul{y}\rangle) = \length (R/\langle x,\ul{y}\rangle) = \ehk(\langle\ul{y}\rangle R/xR).
\]
However, for an arbitrary ideal $I$ containing $x$ we only have an inequality $\ehk(I) \leq \ehk(IR/xR)$
(\cite[Proposition~2.13]{WatanabeYoshida}).
Since $\csig(R)$ (resp. $\csig(R/xR)$) can be computed on a fixed system of parameters $x,\ul{y}$ (resp. $\ul{y}$), 
the inequality now follows.  
\end{proof}

As a corollary we obtain that F-rationality deforms 
(this was proven in \cite[Theorem~4.2(h)]{HochsterHuneke2} without excellence).

\begin{corollary}\label{F-rationality deforms}
Let $(R, \mf m)$ be an excellent local ring of characteristic $p > 0$
which is a homomorphic image of a Cohen-Macaulay ring and $x \in \mf m$ be a regular element.
If $R/xR$ is F-rational, then so is $R$.
\end{corollary}
\begin{proof}
By Corollary~\ref{cor relative deformation} and Corollary~\ref{cor csig independent}
we have
$\csig(R) \geq \csig(R/xR) > 0$, so $R$ is F-rational.
\end{proof}

We can also derive an easy inequality that connects $\csig(R)$ with $\rsig(R)$.

\begin{corollary}\label{cor relative vs Frational}
Let $(R, \mf m)$ be a Cohen-Macaulay local ring of characteristic $p > 0$.
Then
\[
\rsig(R) \geq \csig(R) \geq \frac{\rsig(R)}{\type R},
\]
where $\type(R)$ is the dimension of the socle of any system of parameters. 
\end{corollary}
\begin{proof}
The first inequality is clear from the definition. For the second, we note that 
for any $J$ such that $\mf mJ \subseteq \ul{x}$ 
\[
\frac{\ehk (\langle\ul{x}\rangle) - \ehk(J)}{\length (R/\langle\ul x\rangle) - \length (R/J)}
\geq \frac{\ehk (\langle\ul{x}\rangle) - \ehk(J)}{\length (R/\langle\ul x\rangle) - \length (R/\langle\ul x\rangle:\mf m)}.
\]
A classical result of Northcott (\cite{Northcott}) asserts that 
the denominator is independent of $\ul x$.  The statement now follows after taking the infimums.
\end{proof}

\subsection{Localization and flat extension}
Another benefit of the normalized F-rational signature is that it satisfies the expected localization inequality $\csig (R) \leq \csig(R_\mf p)$
which is not known to hold for the original definition of Hochster and Yao. Namely, it is only known (\cite[Proposition~5.8]{HochsterYao})
that $\rsig (R) \leq \rsig(R_\mf p) \alpha(\mf p)$, where $\alpha (\mf p) \geq 1$
with equality if and only if $R/\mf p$ is regular.

The following version of the proof was suggested to us by Pham Hung Quy. In
F-finite case the proof is much easier, see Theorem~\ref{thm dual semicontinuous}.

\begin{proposition}\label{relative localizes}
Let $(R, \mf m)$ be a local ring of characteristic $p > 0$ and $\mf p$ be a prime ideal.
Then $\csig(R) \leq \csig(R_\mf p)$.
\end{proposition}
\begin{proof}
By induction on $\dim R/\mf p$, we may also assume that $\dim R/\mf p = 1$.

Let $\ul{x}$ be elements in $R$ such that the images of $\ul{x}$
in $R_\mf p$ form a system of parameters. 
Let $\mf p, \mf p_1, \ldots, \mf p_k$ be minimal primes of $\ul{x}$. 
By prime avoidance, we may  choose 
$u \in (\cap_{i = 1}^k \mf p_i) \setminus \mf p$
and $v \in \mf p \setminus \cup_{i = 1}^k \mf p_i$.
For every $n \geq 1$ let $y_n = u^n + v$,
then $\ul{x}, y_n$ is a system of parameters.
By the associative formula for multiplicity we have for every integer $m \geq 1$
 \[
\length(R/\langle \ul{x}, y_n^m\rangle )
\geq \eh(\langle y_n^m \rangle, R/\ul{x})  = nm \eh(\langle u \rangle, R/\mf p)\length_{R_\mf p}(R_\mf p/\langle \ul{x}\rangle) 
+ \sum_{i = 1}^k
m\eh(\langle v \rangle, R/\mf p_i)\length_{R_{\mf p_i}}(R_{\mf p_i}/\langle \ul{x}\rangle).
\]
By \cite[Proposition~4.4]{equi} we also have 
\[
\lim_{m \to \infty} \frac 1m \ehk (\langle \ul{x}, y_n^m \rangle) =
n \eh(\langle u \rangle, R/\mf p)\ehk (\langle \ul{x}\rangle R_\mf p) 
+ \sum_{i = 1}^k
\eh(\langle v \rangle, R/\mf p_i) \ehk (\langle \ul{x}\rangle R_{\mf p_i}).
\]
Let $J$ be an arbitrary ideal in $R_\mf p$ such that $\ul{x} \subset J$. Since $J \cap R$ is $\mf p$-primary we similarly obtain that for
\[
\length (R/\langle J\cap R, y_n^m\rangle)
= \eh(\langle y_n \rangle, R/J\cap R) = mn\eh(\langle u \rangle, R/\mf p)\length_{R_\mf p}(R_\mf p/J)
\]
and
\[
\lim_{m \to \infty} \frac 1m  \ehk(\langle J\cap R, y_n^m\rangle)
 = n\eh(\langle u \rangle, R/\mf p)\length_{R_\mf p}(R_\mf p/J).
\]

For brevity, let us denote 
$C = \sum_{i = 1}^k
\eh(\langle v \rangle, R/\mf p_i)\length_{R_{\mf p_i}} (R_{\mf p_i}/\langle \ul{x}\rangle)$.
It follows from the above discussion that
\[
\csig (R) \leq
\frac{
\ehk(\langle \ul x, y_n^m\rangle) - \ehk(\langle J \cap R, y_n^m \rangle)}{\length(R/\langle \ul{x}, y_n^m\rangle) - \length (R/\langle J\cap R, y_n^m\rangle)} 
\leq 
\frac{
\ehk(\langle \ul x, y_n^m\rangle) - \ehk(\langle J \cap R, y_n^m \rangle)}
{nm\eh(u, R/\mf p)\left (\length (R_\mf p/\langle \ul{x}\rangle) - \length (R_\mf p/J) \right ) + mC}. 
\]
Therefore, after taking the limit as $m \to \infty$ we obtain that for any $n$
\[
\csig (R) \leq
\frac{
n\eh(u, R/\mf p)\left (\ehk(\ul xR_\mf p) - \ehk(JR_\mf p) \right ) + C}{n\eh(u, R/\mf p)\left (\length (R_\mf p/\langle \ul{x}\rangle) - \length (R_\mf p/J) \right ) + C}.
\]
Therefore, after taking the limit as $n \to \infty$,
we must have 
\[
\csig (R) \leq
\inf_{\ul x \subsetneq J}
\frac{
\eh(u, R/\mf p)\left (\ehk(\ul xR_\mf p) - \ehk(JR_\mf p) \right )}{\eh(u, R/\mf p)\left (\length (R_\mf p/\langle \ul{x}\rangle) - \length (R_\mf p/J) \right )}
=
\inf_{\ul x \subsetneq J}
\frac{\ehk(\ul xR_\mf p) - \ehk(JR_\mf p)}{\length_{R_\mf p}(R_\mf p/\ul{x}R_\mf p) - \length_{R_\mf p}(R_\mf p/JR_\mf p)}
= \csig(R_\mf p).
\]
\end{proof}

Next we study the behavior under flat extensions.

\begin{proposition}\label{flat map}
Let $(R, \mf m)$ and $(S, \mf m_S)$ be local  rings of characteristic $p > 0$
with a flat local map $R \to S$. Then $\csig(R) \geq \csig(S)$.
\end{proposition}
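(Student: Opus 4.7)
\emph{Plan.} My strategy is to show that every competing pair $(\ul{x}, I)$ for $\csig(R)$ lifts to a competing pair in $S$ realizing the same ratio of Hilbert--Kunz differences to length differences, so that taking infima gives $\csig(S) \leq \csig(R)$. Since $R \to S$ is flat local and both rings are Cohen--Macaulay, the closed fiber $S/\mf{m}S$ is also Cohen--Macaulay. Pick any system of parameters $\ul{y}$ for $S/\mf{m}S$ and lift it to $S$; then $\ul{x},\ul{y}$ is a system of parameters in $S$. For every $e \geq 0$ the Frobenius power $\frq{\ul{y}}$ is a regular sequence on the Cohen--Macaulay closed fiber, hence by the local criterion of flatness the quotient $S/\frq{\ul{y}}$ is flat over $R$ with Artinian closed fiber of length $c_e := \length_S(S/(\mf{m}S + \frq{\ul{y}}))$; in particular (taking $e=0$) $S/\langle \ul{y}\rangle$ is flat over $R$ with Artinian closed fiber of length $\lambda := \length_S(S/(\mf{m}S + \langle \ul{y}\rangle))$.

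Flatness with Artinian closed fiber then gives, for any $\mf{m}$-primary ideal $J \subseteq R$, the multiplicativity identities
\[
\length_S(S/(JS + \langle \ul{y}\rangle)) = \lambda \cdot \length_R(R/J), \qquad \length_S(S/(\frq{J}S + \frq{\ul{y}})) = c_e \cdot \length_R(R/\frq{J}).
\]
Dividing the second by $p^{e\dim S}$ and letting $e \to \infty$ produces the Hilbert--Kunz associativity formula
\[
\ehk_S(JS + \langle \ul{y}\rangle) \;=\; \ehk_R(J)\cdot \ehk(\langle \ul{y}\rangle, S/\mf{m}S),
\]
and, because $S/\mf{m}S$ is Cohen--Macaulay with $\ul{y}$ a parameter sequence, $\ehk(\langle \ul{y}\rangle, S/\mf{m}S) = \length_{S/\mf{m}S}((S/\mf{m}S)/\langle \ul{y}\rangle) = \lambda$. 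Thus the \emph{same} scalar $\lambda$ governs scaling of both lengths and Hilbert--Kunz multiplicities.

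Applying these identities with $J = \langle\ul{x}\rangle$ and $J = I$, and noting that the strict containment $\langle\ul{x}\rangle \subsetneq I$ implies $\langle \ul{x},\ul{y}\rangle S \subsetneq IS + \langle \ul{y}\rangle$ (both length differences are positive multiples of $\lambda$), I obtain
\[
\frac{\ehk_S(\langle\ul{x},\ul{y}\rangle S) - \ehk_S(IS + \langle \ul{y}\rangle)}{\length_S(S/\langle\ul{x},\ul{y}\rangle S) - \length_S(S/(IS + \langle \ul{y}\rangle))} \;=\; \frac{\ehk_R(\langle\ul{x}\rangle) - \ehk_R(I)}{\length_R(R/\langle\ul{x}\rangle) - \length_R(R/I)}.
\]
The left side is a legitimate competitor for $\csig(S)$, so infimizing over $(\ul{x},I)$ in $R$ gives $\csig(S) \leq \csig(R)$.

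The main technical step is the two flatness-plus-associativity identities above, which rely on applying the local flatness criterion uniformly in $e$ to the regular sequences $\frq{\ul{y}}$ on the Cohen--Macaulay closed fiber. These are standard but worth stating carefully. Once in hand, the argument is a straightforward bookkeeping calculation, and the crucial point is simply that the scaling factor $\lambda$ cancels in the ratio — a cancellation forced precisely by Cohen--Macaulayness of the closed fiber, and the real reason the normalization in the definition of $\csig$ pairs so well with flat extensions.
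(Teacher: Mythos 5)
Your proof is correct, but it takes a genuinely different route from the paper's.

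The paper first localizes $S$ at a minimal prime $Q$ of $\mf m S$ and invokes Proposition~\ref{relative localizes} to pass to $\csig(S) \leq \csig(S_Q)$, so that one may assume $\mf m S$ is $\mf m_S$-primary and $\dim S = \dim R$. Then flatness makes the lengths (and hence the relative Hilbert--Kunz multiplicities) scale by the common factor $\length(S/\mf m S)$, so the ratio defining $\csig$ is preserved for the ideals $\langle\ul x\rangle S \subset I S$, and one concludes because $S$ has ``more'' competitors. Your argument avoids this localization step entirely: you pick a lift $\ul y$ of a system of parameters of the Cohen--Macaulay closed fiber, use the local criterion of flatness on the regular sequences $\frq{\ul y}$ to show that $S/\frq{\ul y}$ is flat over $R$ with Artinian closed fiber, and then observe that since the fiber is Cohen--Macaulay the exact length formula $c_e = p^{e\dim(S/\mf m S)}\lambda$ makes the scaling factor in the length identity and in the Hilbert--Kunz associativity identity the same constant $\lambda$. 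This makes the cancellation in the ratio explicit, whereas the paper gets the cancellation ``for free'' after reducing the fiber dimension to zero. Your approach is slightly more work but is self-contained: it does not rely on the localization result (Proposition~\ref{relative localizes}) as a black box, and it isolates the Cohen--Macaulayness of the fiber as the reason the two scaling factors agree. Both routes are valid; the paper's is shorter because the machinery it needs was already built.

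One small stylistic note: your assertion that the containment $\langle\ul x,\ul y\rangle S \subsetneq IS + \langle\ul y\rangle$ is strict because the length difference is a positive multiple of $\lambda$ is exactly right, and it is worth stating in this order (length first, then strictness), as you do, rather than trying to argue the strictness directly from ideal containments.
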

\begin{proof}
First, we can take a minimal prime ideal $Q$ of $\mf mS$ and observe that 
$\csig(S) \leq \csig(S_Q)$ by Proposition~\ref{relative localizes}.
Thus we assume that $\mf mS$ is primary to the maximal ideal of $S$, i.e., the two rings have same dimension. 

By flatness, we can tensor a composition series and get that, for an $\mf m$-primary ideal $I$, 
$\length (S/IS) = \length (R/IR) \length (S/\mf mS)$.
Thus
\[
\frac{\ehk (\langle\ul{x}\rangle) - \ehk(I)}{\length (R/\langle\ul x\rangle) - \length (R/I)} = 
\frac{\ehk (\langle\ul{x}\rangle S) - \ehk(IS)}{\length (S/\langle\ul x\rangle S) - \length (S/IS)}.
\]
Thus, because there are more ideals in $S$, we obtain  that
\[
\csig(R) = \inf_{\ul{x} \subset I} \frac{\ehk (\langle\ul{x}\rangle) - \ehk(I)}{\length (R/\langle\ul x\rangle ) - \length (R/I)}
\geq \inf_{\ul{x}S \subset J} \frac{\ehk (\langle\ul{x}\rangle S) - \ehk(J)}{\length (S/\langle\ul x\rangle S) - \length (S/J)}  = \csig(S).
\]
\end{proof}

By using that a system of parameters can be fixed we have the following result. 

\begin{corollary}\label{c relative transcendental}
Let $(R, \mf m)$ be a local ring of characteristic $p > 0$. 
Then $\csig(R) = \csig(R[[t]]) = \csig(R(t))$.
\end{corollary}
\begin{proof}
By Proposition~\ref{flat map}, 
$\csig(R) \geq \csig(R(t))$, so we need to show the opposite inequality. 

Let $S = (R[t])_{(\mf m,t)}$. 
As a first step, we observe that $\csig(R) = \csig(S)$. Namely, we take an arbitrary system of 
parameters $\ul{x}$ of $R$ and observe that the map $I \mapsto (I, t)$ forms a
bijection between socle ideals of 
$(\ul{x})$ and $(\ul{x}, t)$. Since $\length (R/I) = \length (S/(I,t)S)$, we can derive that this extension preserves 
the relative Hilbert--Kunz multiplicities, so the claim follows from Corollary~\ref{cor csig independent}. The same argument holds for $R[[t]]$.
Now, $\csig(S) \leq \csig(S_{\mf mS})$ by Proposition~\ref{relative localizes}
and the assertion follows since $R(t) = S_{\mf mS}$.
\end{proof}

Last, we give the following comparison result between F-signature and relative F-rational signature.

\begin{proposition}\label{p csig vs Fsig}
Let $(R, \mf m)$ be a Cohen-Macaulay local ring, then $\csig(R) \geq \fsig(R)$, 
where the latter is defined for non F-finite rings as in \cite[Definition~2.2]{Yao2}. 

Moreover, if $\csig(R) > 0$ then $\csig(R) = \fsig(R)$ if and only if $R$ is Gorenstein.
\end{proposition}
\begin{proof}
We see that $\csig(R) \geq \fsig(R)$ using \cite[Theorem~1.3(3)]{Yao2}.
If $R$ is Gorenstein, then, by the proof of \cite[Theorem~11]{HunekeLeuschke}, 
$\fsig(R) = \ehk(\langle \ul{x} \rangle) - \ehk(\langle \ul{x}, u\rangle)$
where $u$ generates the socle $(\langle \ul{x} \rangle: \mf m)/\langle \ul{x} \rangle$.
However, Corollary~\ref{cor csig independent} shows that 
$\csig(R) = \ehk(\langle \ul{x} \rangle) - \ehk(\langle \ul{x}, u\rangle)$.

For the converse, \cite[Remark~2.3]{Yao2} allows to extend the residue field without changing the F-signature, so we 
let $S$ be such an extension, an F-finite faithfully flat $R$-algebra such that $S/\mf mS$ is its residue field. 
Thus, by Proposition~\ref{flat map}  $\fsig(R) = \csig(R) \geq \csig(S) \geq \fsig(S)$, so $\csig(S) = \fsig(S)$.
Note that it is now enough to show the statement for $S$ which is F-finite. 
The F-finite case will follow from \cite[Proposition~3.10]{Sannai} 
after we will prove 
Corollary~\ref{cor dual exists}.
\end{proof}

\subsection{Graded rings}
We also want to remark that the F-rational signature of a graded ring can be computed 
using only homogeneous ideals.

\begin{proposition}\label{p go graded}
Let $R$ be a $\mathbb N$-graded ring
over a local ring $(R_0, \mf m)$ and $M = \mf m \oplus \bigoplus_{n > 0} R_{n}$. 
Then $\csig(R_M) = \csig(R)$, where the latter is computed in the graded category.
\end{proposition}
\begin{proof}
We can choose a homogeneous system of parameters $\ul{x}$ of $R$ to compute $\csig(R_M)$. Now, for any finite colength ideal $I \subset R_M$, it is known that $I$ and its initial form ideal 
\[
\In I = (I + R_{>0}) \cap R_0 \oplus  (I + R_{>1}) \cap R_1 \oplus \cdots
\]
have equal colengths.  
Moreover, one can easily see that 
$\frq{(\In I)} \subseteq \In \frq{I}$, 
so $\ehk(\In(I)) \geq \ehk(I)$. 
Thus, by comparing $I$ with its initial ideal, we see that 
\[
\csig(R_M) = 
\inf \left \{\frac{\ehk(\langle\ul{x}\rangle) - \ehk(I)}{
\length( R/\langle\ul{x}\rangle) - \length(R/I)} \mid \ul{x} \subset I \right\}
\]
can be computed using only homogeneous ideals, so it is then equal to $\csig(R)$.
\end{proof}

By the same technique we derive an expected inequality with the associated graded ring.

\begin{proposition}\label{p associated graded}
Let $(R, \mf m)$ be a local ring of characteristic $p > 0$
and let $I$ be an $\mf m$-primary ideal.
Then
$\csig(R) \geq \csig(\Gr_{I} (R))$, where the latter is computed in the graded category.
\end{proposition}
\begin{proof}
We may assume that $\Gr_{I} (R)$ is F-rational, otherwise the 
statement is trivial. Note that $\Gr_{I} (R)$ is a quotient of a polynomial 
ring over $R/I$ which is Cohen-Macaulay, so $\Gr_{I} (R)$ is 
Cohen-Macaulay and so is $R$ by \cite[Theorem~4.11]{HochsterRatliff}.

We may use Corollary~\ref{c relative transcendental} to assume that the residue field is infinite. Then we may choose a regular sequence on $\Gr_{I} (R)$ 
such that its lift to $R$ is a minimal reduction of $I$ 
(e.g., by the proof of \cite[Corollary 8.6.2, Theorem~8.6.3]{HunekeSwanson}). 
The multiplicity of an ideal generated by a regular sequence 
is equal to its colength, thus the multiplicities of
this common system of parameters in $R$ and in the associated graded ring are equal. Hence, by comparing any socle ideal $J$ with its initial ideal as in the previous proof, we see that 
\[
\csig(R) = 
\inf \left \{\frac{\ehk(\langle\ul{x}\rangle) - \ehk(J)}{
\length(R/\langle\ul{x}\rangle) - \length(R/J)} \mid \ul{x} \subset J \right\} 
\geq 
\inf \left \{\frac{\ehk(\langle\ul{x}\rangle) - \ehk(\In I)}{
\length(\Gr_{I} (R)/\langle \ul{x} \rangle) - \length(\Gr_{I} (R)/\In J)} \mid \ul{x} \subset J \right\}
\]
and the latter is clearly greater or equal to $\csig(\Gr_{I} (R))$.
\end{proof}


\begin{corollary}\label{cor Rees}
Let $(R, \mf m)$ be a local ring of characteristic $p > 0$,
let $I$ be an $\mf m $-primary ideal, and let $S = R[It, t^{-1}]$.
Then for any prime ideal $\mf p$ of $S$ 
$\csig(S_\mf p) \geq \csig(\Gr_{I} (R))$, where the latter is computed in the graded category.
\end{corollary}
\begin{proof}
The statement is trivial if $\csig(\Gr_{I} (R)) = 0$, so 
as in the proof of Proposition~\ref{p associated graded}, we 
may assume that $\Gr_{I} (R)$ is Cohen-Macaulay. 
Since $\Gr_{I} (R) = S/t^{-1}S$, for any $\mf p \in \Spec S$ 
that contains $t^{-1}$ the localization $S_{\mf p}$ is Cohen-Macaulay.
Thus, we use the inequalities 
from Corollary~\ref{cor relative deformation} and Proposition~\ref{relative localizes} to get that
\[
\csig(S_\mf p) \geq \csig((S/t^{-1}S)_\mf p)
\geq \csig(\Gr_{I} (R)).
\]
For primes that do not contain $t^{-1}$, we note that 
$S_{t^{-1}} \cong R[t, t^{-1}]$ 
and then use Proposition~\ref{relative localizes}, Corollary~\ref{c relative transcendental}, and Proposition~\ref{p associated graded}.
\end{proof}

\subsection{Finite extensions}

The following result recovers F-rationality of direct summands of 
regular rings. 

\begin{proposition}
Let $(R, \mf m, k)$ be a local domain of characteristic $p > 0$ and let $(S, 
\mf n, \ell)$ be a module-finite domain extension of $R$.
Then $[S : R] \csig(R) \geq [\ell : k] \csig(S)$.
\end{proposition}
\begin{proof}
Let $\ul{x}$ be a system of parameters in $R$, then $\ul{x}$ is also a system of parameters in $S$ because $S$ is module-finite.
By the formula for Hilbert--Kunz multiplicity in finite extensions \cite[Theorem~2.7]{WatanabeYoshida}
$[S : R] \ehk(I) = [\ell : k] \ehk(IS)$ for any $\mf m$-primary ideal. 
Because there can be ideals in $S$ which are not extended from an ideal in $R$, it follows that
\[
\csig(R) = \inf_{\ul{x} \subset I} \frac{\ehk (\langle\ul{x}\rangle) - \ehk(I)}{\length (R/\langle\ul x\rangle ) - \length (R/I)} = \inf_{\ul{x} \subset I} \frac{[\ell : k]}{[S:R]} \frac{\ehk (\langle\ul{x}\rangle S) - \ehk(IS)}{\length (S/\langle\ul x\rangle S) - \length (S/IS)}  \geq \frac{[\ell : k]}{[S:R]} \csig(S).
\]

\end{proof}

\section{F-signature theories in the dualizing module}
\label{dualizing}

The goal of this section is to take our considerations to the 
dualizing module, where we can see generalizations of all three perspectives 
on F-signature. In this section we will focus on a generalization of the definition of F-signature via so-called F-splitting or degeneracy ideals. 

In order to relate this definition to the relative F-rational signature, we first need to transfer $\csig(R)$ to the dualizing module.  Since the results of Hochster--Yao already transferred the invariant to the top local cohomology module, it only remains to dualize 
Proposition~\ref{HYprop} to obtain a theory of F-signature of the dualizing module based on the Cartier trace operator. This definition can be further generalized in the framework of Cartier modules introduced by Blickle (\cite{Blickle}), but we will not pursue such generalization in this work.

\begin{definition}
Let $R$ be a ring of positive characteristic $p > 0$.  
A Cartier module $(M, \phi)$ is a finitely generated module $M$ equipped with a $p^{-1}$-linear map $\phi\colon M \to M$.
Equivalently, $\phi$ can be thought of as an $R$-module homomorphism $F_* M \to M$. 
\end{definition}

\begin{remark}\label{trace remark}
The canonical module $\omega_R$ of a Cohen-Macaulay ring
is naturally a Cartier module via the trace map which is constructed 
as follows.
By applying $\Hom_R (\bullet, \omega_R)$ to the Frobenius map
we obtain the trace map
$\trace^e \colon F_*^e \omega_R \cong \Hom (F_*^e R, \omega_R) \to \omega_R $
by the evaluation $\trace^e (\alpha) = \alpha (1)$.
Since $F_*R$ is a maximal Cohen-Macaulay module, 
we can dualize again and obtain that any map from $F_*^e \omega_R \cong \Hom (F_*^e R, \omega_R)$  to $\omega_R$ is a precomposition of trace, $\trace^e (F_*^e r \times \bullet)$.
\end{remark}

The next definition naturally extends the definition of 
F-signature used by Tucker in \cite{Tucker} 
and based on prior work of Yao (\cite{Yao}) and Aberbach--Enescu (\cite{AberbachEnescu2}).

\begin{definition}\label{def Cartier}
Let $(R, \mf m, k)$ be a local F-finite ring of characteristic $p > 0$ with a dualizing module $\omega_R$
and $W \neq 0$ be a quotient of $\omega_R/\mf m\omega_R$. 
For $e \geq 1$ we define the submodule of $F_*^e\omega_R$
\[
Z_e(W) = \bigcap_{r \in R} 
\ker \left [
F_*^e \omega_R\xrightarrow{\trace^e (F_*^e r \times \bullet)} \omega_R \to W
\right ].
\]
The Cartier signature of $W$ is then defined as
\[
\Wsig (W) = \lim_{e \to \infty} \frac{\dim_k ((F_*^e\omega_R)/Z_e(W))}{[k : k^{p^e}]p^{e\dim R} }. 
\]
\end{definition}

It is clear that $\frq{\mf m}\omega_R \subseteq 	Z_e(W)$ for all $W$, so 
$\dim_k ((F_*^e\omega_R)/Z_e(W))$ is finite.

\begin{remark}\label{rmk rank}
If $R$ is a domain, then, in the denominator, $p^{e\dim R} [k : k^{p^e}] = \rank F_*^e R = \rank F_*^e \omega_R$, 
where the first equality holds due to \cite[Proposition~2.3]{Kunz2} and the second because 
$\rank \omega_R = 1$.
\end{remark}

\begin{remark}\label{rmk splitting def}
An alternative interpretation of $Z_e(W)$ is closer to the standard 
definition of the splitting ideals $I_e$:
for $N \subset \omega_R$ consider a submodule 
\[
Z_e(N) = \{x \in F_*^e \omega_R \mid \trace^e_R (F_*^e Rx) \subseteq N\}.
\]
It is easy to see that
if $\length (\omega_R/N) < \infty$ then 
$Z_e(N) = Z_e(\omega_R/N)$.
\end{remark}

\begin{remark}\label{rmk trace}
Because $\trace^e$ generates $\Hom (F_*^e \omega_R, \omega_R)$,
$Z_e(W)$ consists of elements that belong to the kernel of any map 
$F_*^e \omega_R \to \omega_R$.
\end{remark}

We now define a notion of F-signature in $\omega_R$, it 
will be later revisited in Definition~\ref{def extended}. 

\begin{definition}
Let $(R, \mf m, k)$ be a local F-finite ring of characteristic $p > 0$.
Then the (small) Cartier signature of $R$ is 
\[
\sWsig(R) := \inf_{W}
\frac{\Wsig (W)}{\dim_{k} W},
\]
where the infimum is taken over all nonzero quotients $W$ of $\omega_R/\mf m \omega_R$.
\end{definition}

This definition is chosen so that it coincides with the dual of Proposition~\ref{HYprop}. In order to prove this, we first 
recall that an F-finite Cohen-Macaulay ring always has a dualizing module
because Gabber (\cite[Remark~13.6]{Gabber}) showed that an F-finite ring is an image of a regular ring.

\begin{lemma}\label{go omega}
Let $(R, \mf m, k)$ be an F-finite Cohen-Macaulay local ring and let $\omega_R$ be a dualizing module. 
Let $\ul{x}$ be a system of parameters and 
$\ul{x} \subsetneq I \subseteq \langle \ul{x} \rangle : \mf m$. 
In the notation of Proposition~\ref{HYprop} take $W = \Hom_R (L, E)$, where $E$ is the injective hull of the residue field.
Then 
\[
\dim_k \left((F_*^e\omega_R)/Z_e(W) \right) =  [k : k^{p^e}] \length \left(\frq{I}/\frq{\langle\ul{x}\rangle}\right).
\]
Therefore, $\ehk (\langle \ul{x}\rangle ) - \ehk(I) = \Wsig(W)$ and $\csig(R) = \sWsig(R)$.
\end{lemma}
\begin{proof}
Denote $M^\vee := \Hom_R (M, E)$.
First, $L \cong I/\langle \ul {x} \rangle$
is a vector space, so 
$W$ is naturally a quotient of 
$(\lc_{\mf m}^d (R))^\vee \otimes_R k \cong \omega_R \otimes_R k$. Hence $\Wsig(W)$ is defined. 

By Proposition~\ref{HYprop}, we are interested in $\length (L_H^{[p^e]})$.
Since
$\length (L_H^{[p^e]})[k:k^{p^e}] = \length (F_*^e L_H^{[p^e]})$, it will be more convenient 
to work with $F_*^e L_H^{[p^e]}$, i.e., the image of $L \otimes_R F_*^e R \to H \otimes_R F_*^e R$.
Because tensor product is right-exact, we have an exact sequence
\[
0 \to \Hom_R (F_*^e R \otimes_R H/L, E) \to \Hom_R (F_*^e R \otimes_R H, E) \to \Hom_R (F_*^e L_H^{[p^e]}, E) \to 0.
\]
By the Hom-tensor adjunction, we obtain that
\[\Hom_R (H \otimes F_*^e R, E) \cong \Hom_R(F^e_* R, \Hom_R (H, E)) 
\cong \Hom_R (F_*^eR, \omega_R) \cong \omega_{F_*^e R} \cong F_*^e \omega_R. \]
Similarly $\Hom_R (F_*^e R \otimes_R H/L, E) \cong \Hom_R (F_*^e R, (H/L)^\vee)$ 
is a submodule of $F_*^e \omega_R$, so it is enough to show that $Z_e(W) = \Hom_R (F_*^e R, (H/L)^\vee)$.

By Remark~\ref{trace remark} we identify $\trace^e (F_*^e r \times \bullet)$ with the evaluation of $\Hom (F_*^e R, \omega_R)$ at $F_*^e r$. Then $Z_e(W) = \{\phi \in \Hom(F_*^e R, \omega_R) \mid \image \phi \subseteq (H/L)^\vee\}$,
which clearly coincides with $\Hom_R (F_*^e R, (H/L)^\vee)$.
\end{proof}

\subsection{New perspective on existing results}
Our new interpretation gives a more transparent proof of the localization 
property (Proposition~\ref{relative localizes}) in the F-finite case.

\begin{proposition}\label {relative localizes again}
Let $(R, \mf m)$ be an F-finite local ring of characteristic $p > 0$ and $\mf p$ be a prime ideal.
Then $\sWsig(R) \leq \sWsig(R_\mf p)$.
\end{proposition}
\begin{proof}
If $R$ is not  F-rational, then $\sWsig(R) = \csig(R) = 0$ and the claim is trivial.
Thus we assume that $R$ is Cohen-Macaulay and let $\omega_R$ be its dualizing module.
By induction on $\dim R/\mf p$,  we may assume that $\dim R/\mf p = 1$.
Let $x$ be a parameter modulo $\mf p$. 
Since $(\omega_R)_{\mf p} = \omega_{R_\mf p}$,
for any submodule $\mf p \omega_{R_\mf p} \subseteq N \subset \omega_{R_\mf p}$
we may associate a submodule $N' = N \cap \omega_R + y\omega_R$ of finite colength. 
Because $N \cap \omega_R$ is $\mf p$-primary, $x$ is a regular element on $\omega_R/N\cap \omega_R$ and, using that multiplicity is additive, we may compute that
\[
\length (\omega_R/N')
= \eh (x, \omega_R/N \cap \omega_R)
= \eh(x, R/\mf p) \length_{R_\mf p} (\omega_{R_\mf p}/N).
\]

If $L \subset \omega_R$
and $Z_e(L)$ are defined as in Remark~\ref{rmk splitting def},
then one can check that
$Z_e(L) :_{F_*^e \omega_R} x = Z_e(L :_{\omega_R} x)$.
In particular, $Z_e(N \cap \omega_R)$
is still $\mf p$-primary. 
We also note that 
$\trace$ localizes, so 
$(Z_e (N \cap \omega_R))_\mf p = Z_e (N) \subseteq F_*^e \omega_{R_\mf p}$. 
It is straightforward 
to check that 
$x F_*^e \omega_R + Z_e (N \cap \omega_R) \subseteq Z_e(N')$,
thus
\begin{align*}
\length (F_*^e \omega_R/Z_e(N'))
\leq & \length (F_*^e \omega_R/(xF_*^e\omega_R + Z_e (N \cap \omega_R)))
= \eh (x, F_*^e\omega_R/Z_e(N \cap \omega_R)) =
\\ &\eh(x, R/\mf p) \length_{R_\mf p} (F_*^e \omega_{R_\mf p}/Z_e(N)).
\end{align*}
It follows that
$
\Wsig(\omega_R/N') \leq \eh(x, R/\mf p)
\Wsig (\omega_{R_\mf p}/N),  
$
hence by Lemma~\ref{go omega}
\[
\sWsig(R_{\mf p}) = \inf_{\mf p\omega_{R_\mf p} \subseteq N}
\frac{\Wsig (\omega_{R_\mf p}/N)}{\length_{R_\mf p}(\omega_{R_\mf p}/N)}
\geq \inf_{\mf p\omega_{R_\mf p} \subseteq N}
\frac{\Wsig (\omega_{R}/N')}{\length(\omega_{R}/N')}
\geq 
\inf_{\mf p\omega_{R_\mf p} \subseteq N}
\frac{\Wsig (\omega_{R}/N')}{\length(\omega_{R}/N')}
\geq \sWsig(R), 
\]
where the last inequality holds
by Proposition~\ref{HYprop}
and the proof of 
Lemma~\ref{go omega}, because every such $N'$
gives a system of parameters
$\ul{x}$ and an ideal
$I \supset \ul{x}$.
\end{proof}

It is also more convenient to derive a deformation statement in the new language.

\begin{proposition} \label{deformation}
Let $(R, \mf m, k)$ be an F-finite Cohen-Macaulay local ring, $x \in \mf m$ be a parameter, and $\omega_R$ be the canonical module of $R$.
Then
$\sWsig (R) \geq \sWsig(R/xR)$.
\end{proposition}
\begin{proof}
Since $\omega_{R/xR} \cong \omega_R/x\omega_R$, 
we have the following commutative diagram
\begin{equation}\label{equation for deformation}
\begin{tikzcd}
0 \arrow{r}{} &
F_*^e \omega_R \arrow{r}{\times F_*^e x} \arrow[swap]{d}{\trace_{\omega_R}} & F_*^e \omega_R \arrow{d}{\trace_{\omega_R} (F_*^e x^{p^e - 1} \times \bullet)}
\arrow{r}{} & F_*^e \omega_{R/xR}
\arrow{d}{\trace_{\omega_{R/xR}}}\arrow{r}{} & 0
 \\
0 \arrow{r}{} &
\omega_R  \arrow{r}{\times x} & \omega_R
\arrow{r}{} & \omega_{R/xR} \arrow{r}{} & 0
\end{tikzcd}
\end{equation}
that allows us to think about the trace map on
$\omega_{R/xR}$ as a precomposition of the trace on $\omega_R$.
For any quotient $W$ of  $k \otimes_{R/xR} \omega_{R/xR} \cong k \otimes_R \omega_R$
and an element $r \in R$ 
we obtain the induced diagram
\[
\begin{tikzcd}[column sep=large]
F_*^e \omega_R \arrow{d}{\trace_{\omega_R} (F_*^e x^{p^e - 1}r\times \bullet)}
\arrow{r}{\alpha} & F_*^e \omega_{R/xR}
\arrow{d}{\trace_{\omega_{R/xR}}(F_*^e r \times \bullet )}\arrow{r}{} & 0
 \\
\omega_R  \arrow[two heads]{d}{}\arrow{r}{} & \omega_{R/xR} \arrow[two heads]{d}{}\arrow{r}{} & 0\\
W \arrow[equal]{r}{}& W &
\end{tikzcd}
\]
which easily shows that $\alpha(Z_e (\omega_R, W):_{F_*^e \omega_R} F_*^e x^{p^e-1}) \subseteq Z_e (\omega_{R/xR}, W)$.
Hence 
\[
\dim_k \frac{F_*^e \omega_R}{
Z_e(\omega_R, W) :_{F_*^e \omega_R} (F_*^e x^{p^e-1}) 
+ F_*^e x\omega_R }
\geq \dim_k \frac{F_*^e \omega_{R/xR}}{ Z_e (\omega_{R/xR}, W)}.
\]
Since $xF_*^e \omega_R \subseteq Z_e(\omega_R, W)$, we may filter
\begin{align*}
\dim_{k} F_*^e \omega_R/Z_e(W)
&= \sum_{n = 1}^{p^e} \dim_{k} 
\frac{Z_e(W) + F_*^e x^{n-1}\omega_R}{Z_e(W) + F_*^e x^{n}\omega_R}=
\sum_{n = 1}^{p^e} \dim_{k} 
\frac{F_*^e x^{n-1}\omega_R}{Z_e(W) \cap F_*^e x^{n-1}\omega_R + F_*^e x^{n}\omega_R}\\ &= 
\sum_{n = 1}^{p^e} \dim_{k} 
\frac{F_*^e \omega_R}{Z_e(W) :_{F_*^e \omega_R} F_*^e x^{n-1}\omega_R + F_*^e x\omega_R}\\
&\geq p^e\dim_{k} 
\frac{F_*^e \omega_R}{Z_e(W) :_{F_*^e \omega_R} F_*^ex^{p^e-1}\omega_R + F_*^e x\omega_R}
\geq p^e\dim_{k} 
\frac{F_*^e \omega_{R/xR}}{Z_e(W, \omega_{R/xR})}.
\end{align*}
Hence $\Wsig (\omega_R, W) \geq \Wsig(\omega_{R/xR}, W)$ and the assertion follows.
\end{proof}

\subsection{Relative Hilbert--Kunz multiplicity on the Grassmannian}
The Grassmann functor of a coherent sheaf $E$ on a scheme $X$ of rank $n$ associates 
to any $X$-scheme $Y$ the set  of all equivalence classes 
$E \times_X Y \to F$ where $F$ is locally free on $Y$ of rank $n$.
The Grassmannian scheme $\pi_n \colon \Grass (E, n) \to X$ represents the functor as 
$\Hom_X (Y, \Grass(E,n))$. The representing scheme is projective over $S$. 
We refer to \cite{Nitsure} and \cite{FGA} for further background. 

If we consider $\omega_R$ as a coherent shear on $\Spec R$, then 
a point $x \in \Grass (\omega_R, n)$ can be thought of as a pair $\{k(x), W_x\}$ consisting of 
a field extension $k(x)$ of $\pi_n(x)$ and a rank $n$ quotient $W_x$ of $\omega_{R} \otimes k(x)$.
This forces us to extend $\Wsig$ to such quotients. 

\begin{definition}\label{def extended}
Let $(R, \mf m, k)$ be a local F-finite ring of characteristic $p > 0$ with a dualizing module $\omega_R$.
If $\ell$ is a field extension of $k$
and $W \neq 0$ is a quotient of $\omega_R \otimes_R \ell$, for $e \geq 1$ we define the submodule of $F_*^e\omega_R \otimes_R \ell$
\[
Z_e(W) = \bigcap_{r \in R} 
\ker \left [
\ell \otimes_R F_*^e \omega_R\xrightarrow{1 \otimes \trace^e (F_*^e r \times \bullet)} \ell \otimes_R \omega_R \to W
\right ].
\]
The Cartier signature of $W$ is then defined as
\[
\Wsig (W) = \lim_{e \to \infty} \frac{\diml ((\ell \otimes_R F_*^e\omega_R)/Z_e(W))}{[k : k^{p^e}]p^{e\dim R} }. 
\]
\end{definition}

\begin{definition}
Let $(R, \mf m, k)$ be a local F-finite ring of characteristic $p > 0$ with a dualizing module $\omega_R$.
Then the Cartier signature of $R$
is 
\[
\eWsig(R) := \inf \left \{\frac{\Wsig (W)}{\dim_{\ell} W}\mid k \subseteq \ell \text{ is finite and } W \text{ is a nonzero quotient of }\ell \otimes_R \omega_R \right\}.
\]
\end{definition}

Before showing that this definition makes sense, i.e., that the limit in the definition exists, we want to make several useful observations.

The next observation is the key to the semicontinuity. 

\begin{lemma}\label{l finite intersection}
Let $(R, \mf m, k)$ be a local F-finite ring of characteristic $p > 0$ with a dualizing module $\omega_R$,
$\ell$ be a field extension of $k$, and $\pi \colon \omega_R \otimes_R \ell \to W$ be a nonzero surjection of vector spaces over $\ell$.
If $r_1, \ldots, r_m \in R$ are such that  $\{F_*^e r_i\}$ generate $F_*^e R$ as an $R$-module for some $e \geq 1$, then 
\begin{align*}
Z_e(W) &= \bigcap_{i = 1}^m 
\ker \left [
\ell \otimes_R F_*^e \omega_R\xrightarrow{1 \otimes \trace^e (F_*^e r_i \times \bullet)} \ell \otimes_R \omega_R \xrightarrow{\pi} W
\right ]\\
&= \ker 
\left[\ell \otimes_R F_*^e \omega_R \xrightarrow{\sum 1 \otimes \trace (F_*^e r_i \times \bullet)} \bigoplus^m \ell \otimes_R \omega_R \xrightarrow{\oplus  \pi}  \bigoplus^m W \right].
\end{align*}
\end{lemma}

While we defined $\eWsig(R)$ as the infimum over all finite extensions, this was done merely for convenience. 

\begin{lemma}\label{go algebraic closure}
Let $(R, \mf m, k)$ be a local ring. Then 
\[
\eWsig(R) = \inf \left \{\frac{\Wsig(W)}{\fdim{\ell} W} \mid \ell \otimes_R \omega_R \to W \to 0, \text{ $\ell$ is algebraic} \right\}
= \inf \left\{\frac{\Wsig(W)}{\fdim{\overline{k}} W} \mid \overline{k} \otimes_R \omega_R \to W \to 0\right \}.
\] 
\end{lemma}
\begin{proof}
If $\ell$ is algebraic (in particular, finite) over $k$, from 
a given surjection $\pi \colon \ell \otimes_R \omega_R \to W$ 
we may obtain a surjection $1 \otimes \pi \colon \overline{k} \otimes_R \omega_R \to \overline{k} \otimes_{\ell}  W$ by tensoring.
Let $W' =  \overline{k} \otimes_{\ell}  W$. We claim that $\Wsig(W) = \Wsig(W')$. 

Let $F_*^e r_1, \ldots, F_*^e r_m$ generate $F_*^e R$ as an $R$-module. Then by Lemma~\ref{l finite intersection}
\[
\fdim{\ell} 
(\ell \otimes_R F_*^e \omega_R)/Z_e(W) = \rank_{\ell}
\left[\ell \otimes_R F_*^e \omega_R \xrightarrow{\sum 1 \otimes \trace (F_*^e r_i \times \bullet)} \bigoplus^m \ell \otimes_R \omega_R \xrightarrow{\oplus  \pi}  \bigoplus^m W \right].
\]
Because $\otimes_\ell \overline{k}$ is exact, we obtain that
$\fdim{\ell} 
(\ell \otimes_R F_*^e \omega_R)/Z_e(W) = \fdim{\overline{k}} 
(\overline{k} \otimes_R F_*^e \omega_R)/Z_e(W')$ and the claim follows.

The claim easily implies the assertion. 
First, $\eWsig(R) \geq \inf \{\Wsig(W)/(\fdim{\ell} W) \mid \ell \otimes_R \omega_R \to W \to 0, \text{ $\ell$ is algebraic}\}
\geq \inf \{\Wsig(W)/(\fdim{\overline{k}} W) \mid \overline{k} \otimes_R \omega_R \to W \to 0\}$, 
where the first inequality holds since we have more extensions and the second because 
of the claim. It remains to observe that for any surjection $\pi \colon \overline{k} \otimes_R \omega_R \to W$ 
we can find, by taking a basis of $W$, a finite extension $\ell$ of $k$ and a surjection $\sigma\colon \ell \otimes_R \omega_R \to V$ 
such that $W = \overline{k} \otimes_{\ell} V$ and $\pi = 1 \times \sigma$.
Thus, $\eWsig(R) \geq \inf \{\Wsig(W) \mid \overline{k} \otimes_R \omega_R \to W \to 0\}$.

\end{proof}

In fact, it will follow from Corollary~\ref{infimum achieved} that even including arbitrary extensions will not change the invariant.

\subsection{Existence and uniform convergence}
We will now show that our definitions make sense, i.e., the dimensions are finite and the limits exist. 
Furthermore, we will show that the convergence in Definition~\ref{def extended} is uniform.

\begin{lemma}\label{lem length is finite}
Let $(R, \mf m, k)$ be a local F-finite ring of characteristic $p > 0$ with a dualizing module $\omega_R$.
If $\ell$ is a field extension of $k$, then for any nonzero quotient $W$ of 
$\omega_R \otimes_R \ell$
\[
\fdim\ell (\ell \otimes_R F_*^e \omega)/Z_e(W) \leq [k:k^{p^e}]\length_R (\omega_R/\mf m^{[p^e]} \omega_R)
< \infty.
\]
\end{lemma}
\begin{proof}
Observe that $\ell \otimes_R F_*^e \omega_R \cong \ell \otimes_{k} k \otimes_R F_*^e \omega \cong \ell \otimes_{k} F_*^e \omega_R/\frq{\mf m}\omega_R$.
Thus 
\[
\fdim{\ell} (\ell \otimes_R F_*^e \omega /Z_e(W))
\leq \fdim{\ell} \ell \otimes_R \left (F_*^e \omega_R/\mf m^{[p^e]}\omega_R \right) = \dim_k F_*^e \omega_R/\mf m^{[p^e]} \omega_R
\]
and the latter length is finite since $\omega_R$ is finitely generated and $R$ is F-finite.
\end{proof}

\begin{theorem}\label{Cartier exists}
Let $(R, \mf m, k)$ be an F-finite reduced Cohen-Macaulay local ring of dimension $d$ with a dualizing module $\omega_R$.
The limit in the definition of $\Wsig (W)$ exists and the convergence is uniform, i.e., there exists 
a constant $C$ such that for any $e \geq 1$, any extension $\ell$ of $k$, and any nonzero quotient $W$ of 
$\ell \otimes_R \omega_R$
\[
\left |
\frac{\dim_\ell \, (\ell \otimes_R F_*^e\omega_R)/Z_e(W)}{[k : k^{p^e}]p^{ed} }
-\Wsig(W)
\right | < \frac C {p^e}.
\]
\end{theorem}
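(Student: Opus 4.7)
The plan is to adapt the uniform-convergence technique of \cite{Tucker} to the Cartier-module setting. Write $a_e := \diml (\ell \otimes_R F_*^e\omega_R)/Z_e(W)$, $r_e := [k:k^{p^e}]\, p^{ed}$, and $s_e := a_e/r_e$. The aim is to establish a uniform Cauchy estimate $|s_{e+1}-s_e|\leq C/p^e$ with $C$ depending only on $R$; summing the resulting geometric series yields both the existence of $\Wsig(W)=\lim_e s_e$ and the uniform convergence bound $|s_e-\Wsig(W)|<C'/p^e$ asserted in the theorem.

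First I would reformulate $a_e$ in more concrete terms. By Remark~\ref{trace remark}, every $R$-linear map $F_*^e\omega_R\to\omega_R$ has the form $\trace^e(F_*^e r\times\bullet)$ for some $r\in R$, and $\Hom_R(F_*^e\omega_R,\omega_R)\cong F_*^eR$. Because $R\twoheadrightarrow k\hookrightarrow\ell$ makes $W$ into an $R$-module via its $\ell$-structure, the adjunction identifies $\Hom_R(F_*^e\omega_R,W)=\Hom_\ell(\ell\otimes_R F_*^e\omega_R,W)$, so $Z_e(W)$ is precisely the intersection of the kernels of $\pi\circ(1\otimes\phi)$ as $\phi$ ranges over $\Hom_R(F_*^e\omega_R,\omega_R)$. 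Since $F_*^eR$ is finitely generated, finitely many such $\phi$'s suffice, and $a_e$ equals the $\ell$-dimension of the image of an explicit evaluation map $\ell\otimes F_*^e\omega_R\to W^{\mu(F_*^eR)}$.

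Next I would compare Frobenius levels $e$ and $e+1$ via the identification $F_*^{e+1}\omega_R=F_*(F_*^e\omega_R)$. Any $\phi\in\Hom_R(F_*^e\omega_R,\omega_R)$ produces $F_*\phi\colon F_*^{e+1}\omega_R\to F_*\omega_R$, which post-composed with $\psi\in\Hom_R(F_*\omega_R,\omega_R)$ gives an element of $\Hom_R(F_*^{e+1}\omega_R,\omega_R)$; as the pair $(\phi,\psi)$ varies one recovers the image of the natural composition pairing $F_*R\otimes_R F_*^eR\to F_*^{e+1}R$. Combined with Step~1, this yields two-sided inequalities bounding the defect $|a_{e+1}-r_1\cdot a_e|$ (where $r_1=r_{e+1}/r_e$) in terms of (i)~the cokernel of the composition pairing and (ii)~the discrepancy between $\mu(F_*N)$ and $(\rank N)\cdot r_1$ for certain $R$-modules $N$ arising as subquotients of $(\ell\otimes F_*^e\omega_R)/Z_e(W)$.

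The main obstacle---the technical heart of the argument---is to show that $|a_{e+1}-r_1 a_e|\leq C_R\,[k:k^{p^e}]\,p^{e(d-1)}$ with $C_R$ uniform in $\ell$, $W$, and $e$. Following the strategy of \cite{Tucker}, this reduces to a uniform minimal-generator bound
\[
\bigl|\mu(F_*^eN)-(\rank N)\cdot r_e\bigr|\leq C_R\,[k:k^{p^e}]\,p^{e(d-1)}
\]
for every finitely generated $R$-module $N$ arising in the above construction, with $C_R$ independent of both $N$ and $e$. This uniformity---what separates the argument from naive Hilbert--Kunz growth estimates---uses the Cohen--Macaulay and F-finite hypotheses through second-order control on Hilbert--Kunz functions. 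Once the bound is in place, dividing by $r_{e+1}$ and telescoping completes the proof; uniformity in $\ell$ and $W$ is automatic since the constants ultimately depend only on the ring $R$.
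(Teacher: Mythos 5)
Your high-level plan is correct and is indeed what the paper does: establish a Cauchy estimate $|s_{e+1}-s_e|\leq C/p^e$ with $C$ depending only on $R$, then telescope (via \cite[Lemma~3.5]{PolstraTucker}). Your Step~1 reformulation of $a_e$ via the trace identification of $\Hom_R(F_*^e\omega_R,\omega_R)$ is also fine. However, the mechanism you propose for the key estimate does not work, and you miss the two ideas that make the paper's proof go through.

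First, the error term is not controlled by the cokernel of the composition pairing $F_*R\otimes_R F_*^eR\to F_*^{e+1}R$; that map is surjective, so its cokernel carries no information. What the paper actually uses (following \cite[Corollary~2.6]{PolstraTucker}) is a pair of exact sequences
\[
\bigoplus^{\alpha_p}\omega_R\to F_*\omega_R\to T\to 0 \quad\text{and}\quad F_*\omega_R\to\bigoplus^{\alpha_p}\omega_R\to U\to 0
\]
with $\alpha_p=[k:k^p]p^d$ and, crucially, $\dim T,\dim U<d$ because the two modules have equal rank at every minimal prime. Applying $\ell\otimes_R F_*^e(-)$ and passing to the quotients by $Z_e$ then bounds $|a_{e+1}-\alpha_p a_e|$ by $\length_R(F_*^eT/\mf m^{[p^e]}T)$ and $\length_R(F_*^eU/\mf m^{[p^e]}U)$, which are $O([k:k^{p^e}]p^{e(d-1)})$ by \cite[Lemma~1.1]{Monsky}. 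Your substitute lemma $|\mu(F_*^eN)-(\rank N)\,r_e|\leq C_R[k:k^{p^e}]p^{e(d-1)}$ cannot be correct as stated: $\mu(F_*^eN)=[k:k^{p^e}]\length_R(N/\mf m^{[p^e]}N)\sim(\rank N)\,\ehk(R)\,[k:k^{p^e}]p^{ed}$, so the bound would force $\ehk(R)=1$, i.e.\ $R$ regular.

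Second, and more fundamentally, you never verify that the comparison maps between level $e$ and level $e+1$ are compatible with the submodules $Z_e(W)$. In the paper this is the content of the two Claims inside the proof (the maps $\gamma_e$ send $\oplus^{\alpha_p}Z_e(W)$ into $Z_{e+1}(W)$, and dually for $\delta_e$), and it is exactly here that the special structure of $\Hom(F_*^e\omega_R,\omega_R)$ as generated by $\trace^e$ is used: any composite $F_*^e\omega_R\to F_*^{e+1}\omega_R\to\omega_R$ is again a premultiple of $\trace^e$. Without this compatibility you cannot descend the exact sequences to the quotients $(\ell\otimes F_*^e\omega_R)/Z_e(W)$, and the Cauchy estimate does not follow. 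This compatibility statement, together with the low-dimensional error modules $T,U$, is the actual technical heart of the argument, and neither appears in your proposal.
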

\begin{proof}
Let $\alpha_p = [k:k^p]p^{d}$. 
Since a Cohen-Macaulay local ring is equidimensional, by \cite[Proposition~2.3]{Kunz2}, the ranks of $F_* \omega_{R_\mf p}$
and $\oplus^{\alpha_p} \omega_{R_\mf p}$ agree at any minimal prime ideal $\mf p$. 
As in the proof of \cite[Lemma~3.3]{Tucker}
this gives us the exact sequence
\[
\bigoplus^{\alpha_p}
\omega_R  \to F_* \omega_R \to T \to 0,
\]
where $\dim T < d$ (i.e., $T = 0$ if $d = 0$). 
Thus the sequence 
$
\bigoplus^{\alpha_p}
F_*^e\omega_R  \xrightarrow{\gamma_e} F_*^{e+1}  \omega_R \to F_*^e T \to 0
$ 
is exact and, by tensoring it with $\ell$, we obtain an exact sequence
\[
\bigoplus^{\alpha_p}
\ell \otimes_R F_*^e\omega_R  \xrightarrow{1_{\ell} \otimes \gamma_e} \ell\otimes_R F_*^{e+1}  \omega_R \xrightarrow{\pi_e} \ell\otimes_R F_*^e T \to 0.
\]

\begin{claim}\label{1st inclusion}
$(1_{\ell} \otimes \gamma_e) (\oplus^{\alpha_p}  Z_e(W)) \subseteq Z_{e+1}(W)$.
\end{claim}
\begin{proof}
By restricting $\gamma_e$ to a summand
and composing with an arbitrary multiple of $\trace^{e+1}$, we obtain the
diagram
\[
F_*^e\omega_R 
\to  F_*^{e+1} \omega_R \xrightarrow{\trace^{e+1}( F_*^{e+1} r \times \bullet)} \omega_R.
\]
Since the resulting map $F_*^e \omega_R \to \omega_R$ is necessarily 
a premultiple of $\trace^e$ by the main property of the trace, the kernel of the induced map
\[
\ell\otimes_R F_*^e\omega_R
\to  \ell\otimes_R F_*^{e+1} \omega_R \xrightarrow{1 \otimes \trace^{e+1}( F_*^{e+1} r \times \bullet) } \ell\otimes_R \omega_R  \to W
\]
contains $Z_e(W)$ by the definition. Since $r$ was arbitrary, we see that $(1_{\ell} \otimes \gamma_e)(Z_e(W)) \subseteq  Z_{e+1}(W)$.
\end{proof}
The claim gives us the exact sequence
\[
\bigoplus^{\alpha_p}
\frac{\ell\otimes_R F_*^e\omega_R}{Z_e(W)} \xrightarrow{1_{\ell} \otimes \gamma_e} \frac{\ell\otimes_R F_*^{e+1} \omega_R}{Z_{e+1}(W)} \xrightarrow{\pi_e} \frac{\ell\otimes_R F_*^e T} {\pi_e(Z_{e+1}(W))} \to 0,
\]
which, as in the proof of Lemma~\ref{lem length is finite}, gives us the bound
\[
\dim_\ell \left(\frac{\ell\otimes_R F_*^{e+1} \omega_R}{ Z_{e+1}(W)} \right )
- \alpha_p
\dim_\ell \left(\frac{\ell\otimes_R F_*^{e} \omega_R}{Z_{e}(W)} \right )
\leq \dim_\ell \left (\ell\otimes_R F_*^e T \right)
\leq 
[k : k^{p^{e}}]\length_R (T/\mf m^{[p^{e}]} T).
\]

For the second step, we consider the analogously obtained exact sequence
\[
\ell \otimes_R F_*^{e+1} \omega_R  \xrightarrow{1_{\ell} \otimes \delta_e}
\bigoplus^{\alpha_p} \ell \otimes_R F_*^e\omega_R \xrightarrow{\rho_e} \ell \otimes_R F_*^e U \to 0.
\]
\begin{claim}
$(1_\ell \otimes \delta^e) (F_*^{e+1} Z_{e+1}(W)) \subseteq \oplus^{\alpha_p} F_*^e Z_e(W)$.
\end{claim}
\begin{proof}
It is enough to show that if we compose 
$(1_\ell \otimes \delta_e)$ with the projection on one of the summands,
then the image of $Z_{e+1}(W)$
is in $Z_e(W)$.
Following the proof of the first claim, this reduces to the fact that 
$F_*^{e+1} \omega_R  \xrightarrow{\delta_e}
\bigoplus^{\alpha_p} F_*^e\omega_R
\to F_*^e \omega_R$ 
is necessarily a premultiple of the trace again. 
\end{proof}

Thus we have the exact sequence
\[
\frac{\ell\otimes_R F_*^{e+1} \omega_R}{Z_{e+1}(W)} \to
\bigoplus^{\alpha_p}
\frac{\ell\otimes_R F_*^e\omega_R}{Z_e(W)} \to \frac{\ell\otimes_R F_*^e U}{\rho(Z_{e}(W))} \to 0,
\]
which by Lemma~\ref{lem length is finite} gives us the bound
\[
\alpha_p
\dim_\ell \left(\frac{\ell\otimes_R F_*^{e} \omega_R }{Z_{e}(W)} \right ) - 
\dim_\ell \left(\frac{\ell\otimes_R F_*^{e+1} \omega_R}{ Z_{e+1}(W)} \right )
\leq 
\dim_\ell (\ell\otimes_R F_*^e U)
= [k : k^{p^{e}}]\length_R (U/\mf m^{[p^e]} U).
\]

After combining and dividing the inequalities by $[k:k^{p^{e+1}}]p^{(e+1)d}$ we get that
\begin{equation}\label{equation bound}
\left |
\frac{\diml (\ell\otimes_R F_*^{e} \omega_R)/Z_{e}(W)}{[k:k^{p^e}]p^{ed}}
 - 
\frac{\diml (\ell\otimes_R F_*^{e+1} \omega_R)/Z_{e+1}(W)}{[k:k^{p^{e+1}}]p^{(e+1)d}}
\right|
\leq 
\frac{\max \{\length_R (T/\mf m^{[p^e]} T), \length_R (U/\mf m^{[p^e]} U)\}}{[k:k^p] p^{(e+1)d}}.    
\end{equation}
By \cite[Lemma~1.1]{Monsky} the right-hand side is bounded above by $D/p^e$ for some constant $D \geq 0$. The theorem then follows from \cite[Lemma~3.5]{PolstraTucker}.
\end{proof}

The proof also shows uniform, independent of a prime ideal, convergence 
on $\Spec R$.

\begin{corollary}\label{Spec uniform}
Let $R$ be an F-finite reduced Cohen-Macaulay ring of dimension $d$.
If $\Spec R$ is connected, then there exists a constant $C$ such that 
for all $\mf p \in \Spec (R)$,
all  field extensions $k(\mf p) \subseteq \ell$, all nonzero quotients $W$ of $\omega_R \otimes_R \ell$, and all $e \geq 1$, we have
\[
\left |
\frac{\dim_\ell \, (\ell \otimes_R F_*^e\omega_R)/Z_e(W)}{[k(\mf p) : k(\mf p)^{p^e}]p^{e\hght \mf p} }
-\Wsig(W)
\right | < \frac C {p^e}.
\]
\end{corollary}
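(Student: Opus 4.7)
The strategy is to run the proof of \Cref{Cartier exists} verbatim, but starting from a single pair of exact sequences defined globally on $R$ rather than locally at each prime, and controlling the resulting telescoping error uniformly in $\mf p$ via a uniform Monsky-type length bound.

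The first step is the construction of the two global exact sequences. Under our hypotheses ($R$ F-finite, reduced, Cohen--Macaulay, locally equidimensional, with $\Spec R$ connected), Kunz's formula \cite[Proposition~2.3]{Kunz2} forces the integer $N := [k(\mf p):k(\mf p)^p]\,p^{\hght \mf p}$ to be constant on $\Spec R$: on each irreducible component it computes the generic rank of $F_*R$, and that value propagates across components by connectedness. In particular $F_*\omega_R$ and $\omega_R^{\oplus N}$ have equal rank at every minimal prime, so choosing maps that are generically isomorphisms on each component produces exact sequences of $R$-modules
\begin{equation*}
\omega_R^{\oplus N}\longrightarrow F_*\omega_R\longrightarrow T\longrightarrow 0,
\qquad
F_*\omega_R\longrightarrow \omega_R^{\oplus N}\longrightarrow U\longrightarrow 0,
\end{equation*}
with $\dim T,\dim U<\dim R$. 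Localizing at any $\mf p$ and tensoring with $\ell$ gives precisely the two local sequences used at the beginning of the proof of \Cref{Cartier exists} (with $\alpha_p$ replaced by the global $N$), and the two internal claims in that proof carry over verbatim. The same two-step computation therefore yields, for every $\mf p$, every algebraic extension $k(\mf p)\subseteq\ell$, every quotient $W$, and every $e$, the telescoping inequality \eqref{equation bound} with $R_\mf p,\mf p,N$ in place of $R,\mf m,\alpha_p$.

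The main obstacle, and the entire content of the corollary, is then a \emph{uniform} bound on the right-hand side of \eqref{equation bound} by $C/p^e$ with $C$ independent of $\mf p$. Since $N$ is constant, this reduces to showing
\[
\length_{R_\mf p}\bigl(T_\mf p/\frq{\mf p} T_\mf p\bigr)\leq C'\,p^{e(\hght \mf p -1)}
\]
uniformly in $\mf p$, and the analogous bound for $U$. To establish this I would take a prime filtration $0=T_0\subsetneq T_1\subsetneq\cdots\subsetneq T_s=T$ of the global module $T$, with subquotients isomorphic to $R/\mf q_j$. Since $R$ is reduced and $\dim T<\dim R$, the ideal $\Ann T$ avoids every minimal prime of $R$, so each $\mf q_j$ has positive height. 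For any $\mf p\supseteq \mf q_j$ local equidimensionality gives $\hght(\mf p/\mf q_j)\leq\hght \mf p-1$, and a uniform version of \cite[Lemma~1.1]{Monsky}---of the sort underlying the global Hilbert--Kunz estimates in \cite{Tucker,PolstraTucker}---yields $\length_{R_\mf p}\bigl(R_\mf p/(\mf q_j+\frq{\mf p})R_\mf p\bigr)\leq c_j\,p^{e(\hght \mf p-1)}$ with $c_j$ independent of $\mf p$. Summing over $j$ and treating $U$ analogously bounds the telescoping error by $C/p^e$ for a constant depending only on $T$, $U$, and $R$, and the conclusion follows from \cite[Lemma~3.5]{PolstraTucker} exactly as in \Cref{Cartier exists}.
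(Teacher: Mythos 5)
Your proposal follows the same two-step strategy as the paper: build a single pair of global exact sequences $\bigoplus^{N}\omega_R \to F_*\omega_R \to T \to 0$ and $F_*\omega_R \to \bigoplus^{N}\omega_R \to U \to 0$ with lower-dimensional cokernels (using that $N = [k(\mf p):k(\mf p)^p]p^{\hght\mf p}$ is constant on the connected $\Spec R$ by Kunz and choosing generic isomorphisms on each component, as in \cite[Corollary~2.6]{PolstraTucker}), then show that the telescoping error in \eqref{equation bound} is uniformly bounded by $C/p^e$ across all primes. The only place you diverge from the paper's own argument is at the uniform length estimate. The paper simply invokes \cite[Theorem~4.3]{Polstra}, which directly gives the uniform bound $\length_{R_\mf p}(T_\mf p/\mf p^{[p^e]}T_\mf p) \leq C\,p^{e(\hght\mf p - 1)}$ for a finitely generated module $T$ supported away from the minimal primes of an F-finite locally equidimensional ring; you instead take a prime filtration of $T$, use catenarity/equidimensionality to push each cyclic subquotient $R/\mf q_j$ one dimension down at every $\mf p \supseteq \mf q_j$, and then appeal to a uniform Monsky bound ``of the sort underlying'' \cite{Tucker,PolstraTucker} for each cyclic piece. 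This is essentially a sketch of how Polstra-type uniform estimates are proved, so it is mathematically sound, but as written it leaves the hardest intermediate step (a constant $c_j$ working for \emph{all} primes simultaneously) as an assertion with an imprecise pointer; the clean citation is \cite[Theorem~4.3]{Polstra}, applied directly to $T$ and $U$ without the filtration detour. With that reference substituted in, your argument is a faithful reproduction of the paper's proof.
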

\begin{proof}
By \cite[Corollary~2.7]{Kunz2} $\alpha_p = [k(\mf p):k(\mf p)^p]p^{\hght \mf p}$ is independent of $\mf p$.
Hence, as in the proof of Theorem~\ref{Cartier exists}
we may choose the exact sequences 
\[
\bigoplus^{\alpha_p}
\omega_R  \to F_* \omega_R \to T \to 0 \text{ and }
F_* \omega_R \to 
\bigoplus^{\alpha_p}
\omega_R  \to U \to 0
\]
where $T_\mf q = U_\mf q = 0$ for every minimal prime $\mf q$.
By \cite[Proposition~3.3]{Polstra}
we can find a constant $D > 0$ such that for all $\mf p \in \Spec R$
\[
\frac{\max \{\length_{R_\mf p} (T_\mf p/\mf p^{[p^e]} T_\mf p), \length_{R_\mf p} (U_\mf p/\mf p^{[p^e]} U_\mf p)\}}{[k(\mf p):k(\mf p)^p] p^{(e+1)\hght p}}
< \frac{D}{p^e}
\]
and then use this bound in (\ref{equation bound}) of Theorem~\ref{Cartier exists}.
\end{proof}

For our main result it will be important to interchange the infimum and the limit.

\begin{corollary}\label{cor infimum uniform}
Let $R$ be an F-finite reduced Cohen-Macaulay ring of dimension $d$.
If $\Spec R$ is connected, then there exists a constant $C$ such that 
for all $\mf p \in \Spec (R)$ and all $e \geq 1$, we have for 
$\alpha_e(\mf p) = [k(\mf p) : k(\mf p)^{p^e}]p^{e\hght \mf p}$
\[
\left |\frac{1}{\alpha_e(\mf p)}
\inf \left\{ \frac{\fdim{\ell}  (\ell \otimes_R F_*^e\omega_{R_\mf p})/Z_e(W)}{\dim_{\ell} W} \mid 
[\ell : k(\mf p)] < \infty, W \neq 0, \ell \otimes_R  \omega_R \to W \to 0 \right \}
-\eWsig(R) \right | < \frac C {p^e}.
\]
\end{corollary}
\begin{proof}
A standard application of uniform convergence shows that  in a local ring $R_{\mf p}$
we can interchange the infimum and the limit:
\begin{align*}
\Wsig(R) &= \inf_{\ell \otimes_R  \omega_{R} \to W} \frac{\Wsig(W)}{\dim_{\ell} W} 
= 
\inf_{\ell \otimes_R \omega_{R} \to W} \frac 1 {\dim_{\ell} W}
\lim_{e \to \infty}
\frac{\dim_{\ell} ((\ell \otimes_R F_*^e\omega_\mf p)/Z_e(W))}{[k(\mf p):k(\mf p)^{p^e}]p^{e\hght \mf p}}
\\
&= \lim_{e \to \infty}
\frac{1}{[k(\mf p):k(\mf p)^{p^e}]p^{e\hght \mf p}}
\inf_{\ell \otimes_R \omega_{R} \to W} \frac 1 {\dim_{\ell} W}\dim_{\ell} ((\ell \otimes_R  F_*^e\omega_\mf p )/Z_e(W)).
\end{align*}
But since the appearing constants are independent of $\mf p$ by Corollary~\ref{Spec uniform}, 
we get that this convergence is also uniform in $\mf p$.
\end{proof}

\subsection{Semicontinuity}
We extended $\Wsig$ to a function on  $\Grass (\omega_R, n)$
and will now show its semicontinuity which will imply several other good properties. 

\begin{theorem}\label{t semi on Grassmannian}
Let $R$ be an F-finite reduced Cohen-Macaulay ring of characteristic $p > 0$ with a connected spectrum and $\omega_R$ be a dualizing module.
Let $\pi_n \colon \cB_n \to \Spec R$ be the rank $n$ Grassmannian of the coherent sheaf $\omega_R$.
Then $\Wsig \colon \cB_n \to \mathbb R$ is a lower semicontinuous function.
\end{theorem}
\begin{proof}
Let $\mathcal Q$ be the universal quotient bundle of $\cB_n$.
Let $r_1, \ldots, r_\mu \in R$ be such that they generate $F_*^e R$ as an $R$ module.
Then we may define $\phi_i \colon F_*^e \omega_R \to \omega_R$ by $x \mapsto \trace^e (F_*^e r_ix)$ and consider 
\[
g_n \colon \pi_n^* F_*^e \omega_R \xrightarrow{\sum \pi_n^* \phi_i} \bigoplus_{i = 1}^\mu \pi_n^* \omega_R \to 
\bigoplus_{i = 1}^\mu \mathcal Q, 
\]
where the last map is given by the construction of $\mathcal Q$. 
The rank of the image of the composition is a lower semicontinuous function (e.g., because non-vanishing of a minor is an open condition). If $x \in \cB_n$ is a point such that $\pi_n (x) = \mf p$, then $k(x)$ is a field extension of $k(\mf p)$ and $x$
represents a rank $n$ quotient $W_x$
of $\omega_R \otimes_R k(x)$. 
Thus at $x$ we have the map
\[
g_{n,e}(x) \colon
F_*^e \omega_R \otimes_R k(x) 
\xrightarrow{\sum \phi_i \otimes 1}
\bigoplus_{i = 1}^\mu \omega_R \otimes_R k(x)
\to \bigoplus_{i = 1}^\mu W_x,
\]
which coincides with Lemma~\ref{l finite intersection}. Note that $\Wsig(x)$ is defined as $\Wsig(W_x)$.

Furthermore, let $\alpha_e(x) = [\pi_n(x):\pi_n(x)^{p^e}] p^{e\hght \pi_n(x)}$ 
and note that 
$\rank g_{n,e}(x)/\alpha_e(x)$ 
then coincides with the sequence used in the definition of $\Wsig(W_x)$. 
Hence, Theorem~\ref{Spec uniform}
establishes its uniform convergence independent of $x$. 
Because $\Spec R$ is connected, by \cite[Corollary~2.7]{Kunz2} $\alpha_e(x)$ is a constant, it does not depend on $x$.
Thus $\rank g_{n,e}(x)/\alpha_e(x)$
is a lower semicontinuous function. 
Therefore, 
\[
\Wsig(x) = \lim_{e \to \infty} \frac{\rank g_{n,e}(x)}{\alpha_e(x)}.
\] 
is lower semicontinuous because it is the uniform limit of lower semicontinuous functions. 
\end{proof}

\begin{corollary}\label{infimum achieved}
Let $(R, \mf m, k)$ be an F-finite Cohen-Macaulay local ring and $\omega_R$ be a dualizing module.
Then the infimum in the definition of $\eWsig(R)$ is achieved. 
\end{corollary}
\begin{proof}
If $R$ is not F-rational, then $\Wsig(W) = 0$ by the tight closure characterization and Lemma~\ref{go omega}. 
Hence, we may assume that $R$ is a domain. By Theorem~\ref{t semi on Grassmannian},  $\Wsig$ is lower semicontinuous on $\cB_n$ for each $n$. Thus, $\Wsig$ has a minimum on $\cB_n$ and this minimum is achieved at a closed 
point $x$. Because $\pi_n$ is projective, $\pi_n(x) = \mf m$. Furthermore, it follows from Nullstallensatz 
that $k(x)$ is a finite extension of $k$. Therefore, 
\[
\eWsig(R) = \min \left \{ \frac 1 n \min_{x \in \cB_n} \Wsig(x) \mid 1 \leq n \leq \dim_k \omega_R/\mf m\omega_R \right \}.
\]
\end{proof}

\begin{remark}\label{rem gaps on the socle}
Lemma~\ref{go omega} allows to view $\csig(R)$ as the infimum of the generalized F-signature function $\eWsig$ on $k$-rational points
of the Grassmannian. Hence, by (\ref{sem prop 4}) of Theorem~\ref{thm semicontinuity properties} we obtain a different proof of  Corollary~\ref{cor csig independent} in the F-finite case.  
It should be noted that for non $k$-rational points the two functions are different:
we will show in the next section 
(Theorem~\ref{thm just comparison} and Corollary~\ref{cor dual exists})
that $\eWsig(R) = \sWsig(R)$, i.e., the minimum on $k$-rational points 
is equal to the global minimum, while Corollary~\ref{cor csig independent}
considers relative Hilbert--Kunz multiplicity after the field extension. 

Semicontinuity also implies that the minimum is separated, i.e, there is the second smallest value.  This result also holds for non F-finite rings by using 
\cite{SmirnovAffine} as in Corollary~\ref{cor csig independent}.
\end{remark}

\begin{corollary}\label{Cartier semicontinuous}
Let $R$ be an F-finite reduced Cohen-Macaulay ring such that $\Spec R$ is connected.
Then 
\[
\mf p \mapsto \eWsig(R_\mf p) := 
\inf \left\{ \frac{\Wsig(W)}{\dim_\ell W} \mid k(\mf p) \subseteq \ell \text{ is finite and }
W \neq 0 \text{ is a quotient of } \omega_{R_\mf p} \otimes_{R_\mf p} \ell \right \}
\]
is a lower semicontinuous function.
\end{corollary}
\begin{proof}
We need to show that $\{\mf p \mid \eWsig(R_\mf p) \leq a\}$ is closed for all $a \in \mathbb{R}$.
Let $\pi_n \colon \cB_n \to \Spec R$ be the rank $n$ Grassmannian. 
Because $\pi_n$ is projective and $\Wsig$ is lower semicontinuous, the set 
$Z_n (\leq a) := \pi_n (\{x \in \cB_n \mid \Wsig(x) \leq a\})$ is closed for all $a \in \mathbb R$. 
Clearly, we have $Z_n (\leq a) = \left \{\mf p \in \Spec R \mid \inf \left \{ \Wsig(x)  \mid \mf p = \pi_n (x) \right\} \leq a   \right \}$.
By the definition of Grassmannian, the points $x \in \pi_n^{-1}(\mf p)$ parametrize all possible extensions of $k(\mf p)$ and 
all possible quotients of $\omega_R \otimes_R k(x)$ of rank $n$. Furthermore, from the proof of Corollary~\ref{infimum achieved}, we know that this infimum is achieved at $x$ such that $k(x)$ is finite over $k(\mf p)$. 

Thus, if we let $N$ be such that $\omega_R$ can be generated by $N$ elements as an $R$-module, then  
$\cup_{1 \leq n \leq N} Z_n (\leq na)$ is closed and coincides with $\{\mf p \mid \eWsig(R_\mf p) \leq a\}$
due to the equality
\begin{align*}
\bigcup_{1 \leq n \leq N} Z_n (\leq na) &= \left \{\mf p \in \Spec R \mid \frac{\Wsig(W)}{\diml(W)} \leq a  \text{ for some } [\ell : k(\mf p)] < \infty \text{ and } \omega_R \otimes_R \ell \to W \to 0\right\}.
\end{align*}
\end{proof}

\section{New properties of the dual F-signature}
\label{semi}

In this section we proceed to study the dual F-signature.  
The powerful linear algebra machinery of the appendix 
will show that all three perspectives on F-rational signature are equivalent.
By combining the available techniques, this will allow to greatly advance the theory of dual F-signature, in particular, due to 
the powerful uniform convergence techniques of Hilbert--Kunz theory available for the relative F-rational signature. 

Let us start by recalling the definition given by Sannai in \cite{Sannai}.

\begin{definition}\label{Sannai dual}
Let $(R, \mf m, k)$ be an F-finite Cohen-Macaulay local ring.
Let $\omega_R$ be the dualizing module of $R$. 
For any $e$ let $b_e(R)$ be the largest integer $N$ 
such that there exists a surjection
\[
F_*^e\omega_R \to \bigoplus^{N} \omega_R\to 0.
\]
Then the dual F-signature of $R$ is defined as
\[
\dsig(R) = \limsup_{e \to \infty}\frac{b_e(R)}{p^{e\dim R} [k : k^{p^e}]}.
\]
\end{definition}

\begin{remark}\label{rmk must be reduced}
In \cite{Sannai} the dual F-signature of $R$ was defined under the assumption that $R$ is reduced. This restriction is not essential, because $R$ must be reduced if $\dsig(R) > 0$. 

Namely, suppose there is a surjection 
$F_*^e \omega_R \to \omega_R \to 0$. If $a$ is a nilpotent element such that $a^{p^e} = 0$, 
then $a F_*^e \omega_R = 0$. It follows that $a\omega_R = 0$ which is 
a contradiction with faithfulness of $\omega_R$ (\cite[(1.8)]{Aoyama}).
\end{remark}

\begin{remark}\label{embed}
Sannai observed in \cite[Lemma~3.6]{Sannai} that there is
a useful one-to-one correspondence, arising from duality, between surjections 
$
F_*^e \omega_R \to \oplus^{b_e} \omega_R\to 0
$
and injections
\[
0 \to \oplus^{b_e} R \to R^{1/p^e} \to M \to 0
\]
where $M$ is maximal Cohen-Macaulay.
In particular, this shows that $\dsig(R) \geq \fsig(R)$.
\end{remark}

We now easily get inequalities connecting the theories of F-rational signature. 

\begin{theorem}\label{thm just comparison}
Let $(R, \mf m)$ be an F-finite Cohen-Macaulay local ring.
Then
\[
\rsig(R) \geq \csig(R) = \sWsig(R) \geq \eWsig(R) \geq \dsig(R) \geq \fsig(R).
\]
\end{theorem}
\begin{proof}
The last inequality was established in \cite[Proposition~3.8]{Sannai}.
The inequalities $\rsig(R) \geq \csig(R)$ and $\sWsig(R) \geq \eWsig(R)$ follow from the definitions. It was proved Lemma~\ref{go omega} in that $\csig(R) = \sWsig(R)$, so it remains to show the last inequality.
 
Let $\omega_R$ be the dualizing module. By tensoring the definition of $b_e(R)$, for any field extension $\ell$ and any quotient $W$ of $\ell \otimes_R \omega_R$ there is a surjection
\[
\ell \otimes_R F_*^e \omega_R
\to \bigoplus^{b_e(R)} \ell \otimes_R \omega_R
\to \bigoplus^{b_e(R)} W.
\]
Since the original $b_e(R)$ surjective
maps were necessarily multiples of $\trace^e$ by Remark~\ref{trace remark},
the map induces a surjection
$
(\ell \otimes_R F_*^e \omega_R)/Z_e(W)
\to
\bigoplus^{b_e(R)} W \to 0
$
and the inequality $\eWsig(R) \geq \dsig(R)$ follows. 
\end{proof}

We refine the theorem in the following uniform relation needed both for showing the existence and semicontinuity of the dual F-signature.

\begin{theorem}\label{thm global dual and Cartier}
Let $R$ be an F-finite ring and $\omega_R$ be its dualizing module.
There is a constant $C$ such that for all $\mf p \in \Spec R$ and for all $e \geq 1$ we have
\[
b_e(R_\mf p) + C\geq \min \left \{\frac{\dim_{k(\mf p)} (F_*^e \omega_{R_\mf p}/Z_e(W))}{\dim_{k(\mf p)} W} \mid W \neq 0, \omega_R \otimes_R k(\mf p) \to W \to 0  \right\} \geq b_e(R_\mf p).
\]
\end{theorem}
\begin{proof}
The second inequality was observed in the proof of Theorem~\ref{thm just comparison},
so it remains to show the first inequality. 
Let us denote 
\[
N_e(\mf p) = \min \left \{\frac{\dim_{k(\mf p)} (F_*^e \omega_{R_\mf p}/Z_e(W))}{\dim_{k(\mf p)} W} \mid W \neq 0, \omega_R \otimes_R k(\mf p) \to W \to 0  \right\}.
\]

As first step, we assume that $R$ is a local ring with the maximal ideal $\mf m$ and the residue field $k$.
Let $X = F_*^e \omega_R/\mf m^{[p^e]}\omega_R$ and  $Y = \omega_R/\mf m\omega_R$.
Note that any map $F_*^e \omega_R \to \omega_R/\mf m\omega_R$ factors through $F_*^e \omega_R/\mf m^{[p^e]}\omega_R$, so we let
$H \subseteq \Hom (X, Y)$ to consist of homomorphisms induced by $\Hom_R (F_*^e \omega_R, \omega_R)$. 
By Corollary~\ref{cor:linalgsurjections}, by taking 
$C = P(\dim_k Y)$ for the polynomial $P(T) = T^2(T^2 - 1)/6$
we can build a surjection 
\[
F_*^e \omega_R/\mf m^{[p^e]}\omega_R \to \bigoplus^{N_e(\mf m) - C} \omega_R/\mf m\omega_R \to 0
\]
which descended from $R$-module maps. By Nakayama's lemma, it 
can be lifted to a surjection $F_*^e \omega_{R} \to \oplus^{N_e(\mf m) - C} \omega_{R}$. 
Thus $b_e(R) \geq N_e(\mf m) - C$. 

Second, in all other cases, we let $\nu$ be any integer such that there are $\nu$ elements that generate $\omega_R$. Since 
$\dim_{k(\mf p)} \omega_{R_\mf p}/\mf p\omega_{R_\mf p} \leq \nu$ for all $\mf p$,
and $P(T)$ is monotone by Corollary~\ref{cor:linalgsurjections}, the theorem follows from the first case with $C = P(\nu)$.
\end{proof}

Combining the theorem with  Lemma~\ref{go omega} we obtain a connection with relative Hilbert--Kunz multiplicities.

\begin{corollary}\label{cor dual and relative}
Let $R$ be an F-finite ring. 
There is a constant $C$ such that for any $\mf p \in \Spec R$ and any system of parameters $\ul{x_\mf p}$ of $R_\mf p$
we have
\[
b_e(\mf p) + C\geq [k(\mf p):k(\mf p)^{p^e}]
\min
\left \{\frac{\length_{R_\mf p} \left(\frq{I}/\frq{\langle\ul{x_\mf p}\rangle}\right)}{\length_{R_\mf p} (I/\langle\ul{x_\mf p}\rangle)} 
\mid  \langle\ul{x_\mf p}\rangle \subset I \subseteq \langle\ul{x_\mf p}\rangle :_{R_\mf p} \mf p \right\} \geq b_e(\mf p).
\]
\end{corollary}

\begin{remark}
From the optimal criterion for two-dimensional vector spaces in Theorem\ref{thm:dimtwocriterion}, by appropriately modifying 
Corollary~\ref{cor:linalgsurjections} and Theorem~\ref{thm global dual and Cartier}, we obtain  
the exact equality
\[
b_e(\mf p) = [k(\mf p):k(\mf p)^{p^e}]
\min
\left \{\frac{\length_{R_\mf p} \left(\frq{I}/\frq{\langle\ul{x_\mf p}\rangle}\right)}{\length_{R_\mf p} (I/\langle\ul{x_\mf p}\rangle)} 
\mid  \langle\ul{x_\mf p}\rangle \subset I \subseteq \langle\ul{x_\mf p}\rangle :_{R_\mf p} \mf p \right\}
\]
whenever $\type R(\mf p) = 2$. 
\end{remark}

We will combine these results with the following uniform convergence result that easily follows from  \cite[Theorem~3.6]{Polstra}.

\begin{theorem}\label{Polstra}
Let $R$ be an F-finite ring. There exists a constant $D$ such that for any $\mf p \in \Spec R$ and
any $\mf p$-primary ideal $I$ we have 
\[
\left |
\frac{1}{p^{e\hght \mf p} } 
\min \left \{ 
\frac{\length_{R_\mf p} \left (\frq{J}R_\mf p/\frq{I}R_\mf p\right)}{\length_{R_\mf p} \left (JR_\mf p/IR_\mf p\right)} 
\mid I \subset J \subseteq \mf p 
\right \}
- \inf \left \{
\frac{\ehk (IR_\mf p) - \ehk (JR_\mf p)}{\length_{R_\mf p} \left (JR_\mf p/IR_\mf p\right)} 
\mid I \subset J \subseteq \mf p 
\right\}
\right| 
\leq \frac{D}{p^{e}}.
\]
\end{theorem}
\begin{proof}
By setting $q_2 \to \infty$ in \cite[Theorem~3.6]{Polstra}, we obtain a constant $D > 0$ 
such that for any $\mf p \in \Spec R$ and 
any pair of $\mf p$-primary ideals $I \subseteq J$ 
\[
\left |
\frac{1}{p^{e\hght \mf p}}\length \left(\frac{\frq{J}R_\mf p}{\frq{I}R_\mf p} \right) - \left(\ehk (IR_\mf p) - \ehk (JR_\mf p)\right)
\right| \leq \frac{D}{p^{e}} \length \left(\frac{JR_\mf p}{IR_\mf p} \right).
\]
In order to finish the proof, it remains to remove the absolute value and
take the infimums:
\[
\inf_{I \subset J}
\frac{\ehk (IR_\mf p) - \ehk (JR_\mf p)}{\length_{R_\mf p} \left (JR_\mf p/IR_\mf p\right)} 
- \frac{D}{p^e} \leq
\inf_{I \subset J} 
\frac{\length_{R_\mf p} \left (\frq{J}R_\mf p/\frq{I}R_\mf p\right)}{p^{e\hght \mf p}\length_{R_\mf p} \left (JR_\mf p/IR_\mf p\right)} 
\leq \inf_{I \subset J}
\frac{\ehk (IR_\mf p) - \ehk (JR_\mf p)}{\length_{R_\mf p} \left (JR_\mf p/IR_\mf p\right)} 
+ \frac{D}{p^e}.
\]
\end{proof}

\subsection{Dual F-signature exists and is semicontinuous}
Now, we can easily show that the dual F-signature exists.

\begin{corollary}\label{cor dual exists}
Let $(R, \mf m)$ be an F-finite Cohen-Macaulay local ring and $\omega_R$ be its dualizing module.
Then 
\[\dsig(R) = \lim_{e \to \infty} \frac{b_e(R)}{\rank F_*^e \omega_R}\]
exists and equals to $\csig(R)$.
\end{corollary}
\begin{proof}
By Corollary~\ref{cor dual and relative}
it is enough to show that for a system of parameters $\ul{x}$
\[
\lim_{e \to \infty} \frac{1}{p^{e\dim R}}
\min
\left \{\frac{\length \left(\frq{I}/\frq{\langle\ul{x}\rangle}\right)}{\length (I/\langle\ul{x_\mf p}\rangle)} 
\mid  \langle\ul{x}\rangle \subset I \subseteq \langle\ul{x}\rangle : \mf m \right\}
\]
exists. This follows from Theorem~\ref{Polstra}.
\end{proof}

Combining the corollary with Theorem~\ref{thm just comparison} and Corollary~\ref{Cartier semicontinuous} shows that the dual F-signature 
defines a lower semicontinuous function on the spectrum
$\mf p \mapsto \dsig(R_\mf p)$. However, it is easy to give a direct proof 
avoiding Corollary~\ref{Cartier semicontinuous}.


\begin{theorem}\label{thm dual semicontinuous}
Let $R$ be an F-finite Cohen-Macaulay locally equidimensional ring.
If $\Spec R$ is connected, then the convergence of 
\[
\frac{b_e(\mf p)}{p^{e\dim R_\mf p}[k(\mf p):k(\mf p)^{p^e}]} \to \dsig(R_\mf p)
\]
is uniform on $\Spec R$ and $\mf p \mapsto \dsig(R_\mf p)$
is lower semicontinuous.
\end{theorem}
\begin{proof}
We may assume that $\Spec R$ is connected because semicontinuity can be checked on components. 
Furthermore, since F-rationality coincides with regularity for Artinian rings, we may 
assume that $\dim R > 0$.

We start by proving lower semicontinuity of the function
\[
\mf p \mapsto b_e(\mf p) := \max \{N \mid F_*^e \omega_{R_\mf p} \to \bigoplus^N \omega_{R_\mf p} \to 0 \text{ is exact}\}.
\]
We can lift a surjection by collecting denominators: for any $\mf p$ 
there is an element $s \notin \mf p$  such that $F_*^e \omega_{R_s} \to \oplus^{b_e(\mf p)} \omega_{R_s} \to 0$ is exact. 
Thus $b_e(\mf q) \geq b_e(\mf p)$ for any $\mf q \in D(s)$.  
Therefore, for any $\mf p$ such that $b_e(\mf p) > a$ there is an open set $\mf p \in D(s)$
satisfying the same inequality, hence the set $\{\mf q \mid b_e(\mf q) > a\}$ is open.

Because $\Spec R$ is connected, by \cite[Corollary~2.7]{Kunz2} for any $e \geq 1$ 
the function $\mf p \mapsto \alpha_e(\mf p) := p^{e\dim R_\mf p}[k(\mf p):k(\mf p)^{p^e}]$ is constant on $\Spec R$. 
Clearly, $\alpha_e(\mf p) \geq p^e$. Thus, by Corollary~\ref{cor dual and relative} there is a constant $C$ such that for all $\mf p$ 
\[
\frac{b_e(\mf p)}{\alpha_e(\mf p)} + \frac{C}{p^e}\geq
\frac{b_e(\mf p) + C}{\alpha_e(\mf p)}\geq \frac{1}{p^{e\dim R_\mf p}} \min
\left \{\frac{\length \left(\frq{I}/\frq{\langle\ul{x_\mf p}\rangle}\right)}{\length (I/\langle\ul{x_\mf p}\rangle)} 
\mid  \langle\ul{x_\mf p}\rangle \subset I \subseteq \langle\ul{x_\mf p}\rangle : \mf p \right\} \geq \frac{b_e(\mf p)}{\alpha_e(\mf p)}.
\]
Hence, Theorem~\ref{Polstra} and Corollary~\ref{cor dual exists} imply that
\[
\left |\frac{b_e(\mf p)}{p^{e\dim R_\mf p}[k(\mf p):k(\mf p)^{p^e}]} - \dsig(R_\mf p) \right | \leq \frac{C + D}{p^e}.
\] 
This finishes the proof because the uniform limit of semicontinuous functions is semicontinuous. 
\end{proof}

Note that semicontinuity is a vast generalization of  \cite[Theorem~1.11]{Velez} where it was shown that the F-rational locus, i.e., the set $\{\mf p \in \Spec R \mid \dsig(R_\mf p) > 0\}$, is open.

\subsection{Global dual F-signature}
De Stefani, Polstra, and Yao defined global versions of F-signature and Hilbert--Kunz multiplicity in \cite{DSPY}. 
A similar definition can be made for the dual F-signature: if $R$ is a Cohen-Macaulay F-finite ring with a dualizing module $\omega_R$, then we define $b_e(\omega_R)$ by the formula in Definition~\ref{Sannai dual}.
It seems that $b_e(\omega_R)$ may depend on the choice of $\omega_R$, 
but this does not affect the dual F-signature:
it follows from a result of Baidya, see (\ref{eq Baidya}) in the next proof, that $|b_e(\omega_R) - b_e(\omega_R')| \leq \dim R$ for any two dualizing modules $\omega_R, \omega_R'$.

We will now give an analogue of the main result of \cite{DSPY}. 
Our treatment is based on a deeper use of semicontinuity and greatly shortens \cite{DSPY}, since we do not need to show the existence of the global dual F-signature separately.

\begin{theorem}\label{t global}
Let $R$ be a Cohen-Macaulay F-finite domain with a dualizing module $\omega_R$. 
Then 
\[
\dsig(R) := \lim_{e \to \infty} \frac{b_e(\omega_R)}{\rank F_*^e \omega_R} = \min\{\dsig(R_\mf p) \mid \mf p \in \Spec R\}.
\]
In particular, the limit defining $\dsig(R)$ exists and does not depend on the choice of $\omega_R$.
\end{theorem}
\begin{proof}
By Theorem~\ref{thm semicontinuity properties}, $s = \min \{\dsig(R_\mf p) \mid \mf p \in \Spec R\}$ exists due to semicontinuity, so the right-hand side is defined. Similarly, the function $\mf p \mapsto b_e(R_\mf p)$ 
also has a minimum by semicontinuity. Note that $\dim R < \infty$ by \cite[Proposition~1.1]{Kunz2}.
Then 
\begin{equation}\label{eq Baidya}
\min \{b_e(R_\mf p) \mid \mf p \in \Spec R\} - \dim R \leq b_e(\omega_R) \leq \min \{b_e(R_\mf p) \mid \mf p \in \Spec R\},    
\end{equation}
where the first inequality holds by \cite[Theorem~1.1]{Baidya} and the second holds by localizing the definition.
Using the inequalities, it is enough to show that  
$\frac{\min \{b_e(R_\mf p) \mid \mf p \in \Spec R\}}{\rank F_*^e R}$ converges to $s$.
 We will derive this using semicontinuity and uniform convergence of the dual F-signature obtained in Theorem~\ref{thm dual semicontinuous}.

By Theorem~\ref{thm semicontinuity properties} there exists $\varepsilon > 0$ such that for every $\mf p$ either $\dsig(R_\mf p) = s$ or 
$\dsig(R_\mf p) > s + \varepsilon$.
By uniform convergence established in Theorem~\ref{thm dual semicontinuous}, for all $e \gg 0$ we have 
\[
\left | \dsig(R_\mf p) - \frac{b_e(R_\mf p)}{\rank F_*^e R}\right| < \frac\varepsilon 2,
\]
note that $\rank F_*^e R = p^{e\dim R_\mf p} [k(\mf p) : k(\mf p)^{p^e}]$ by \cite[Proposition~2.3]{Kunz2}.
Therefore, if $\dsig(R_\mf p) \neq s$, then for any $\mf m$ such that $\dsig(R_\mf m) = s$ we have
\[
\frac{b_e(R_\mf p)}{\rank F_*^e R} > \dsig(R_\mf p) - \frac\varepsilon 2 > s + \frac\varepsilon 2 > 
\frac{b_e(R_\mf m)}{\rank F_*^e R}.
\]
Thus 
$
\min \{b_e(R_\mf p) \mid \mf p \in \Spec R\} = \min \{b_e(R_\mf p) \mid \mf p \in \Spec R, \dsig(R_\mf p) = s\}
$ 
for all $e \gg 0$.
Then by uniform convergence
\[
\lim_{e \to \infty} \frac{b_e(\omega_R)}{\rank F_*^e R} = \lim_{e \to \infty} \frac{\min \{b_e(R_\mf p) \mid \mf p \in \Spec R\}}{\rank F_*^e R} =  \lim_{e \to \infty} \frac{\min \{b_e(R_\mf p) \mid \mf p, \dsig(R_\mf p) = s\}}{\rank F_*^e R}
= s.
\]
\end{proof}

\subsection{Geometrically regular fibers}

As observed in Proposition~\ref{flat map} F-rational signature does not increase in flat extensions and it is natural to search for conditions that ensure equality. 
This seems to be a difficult question, perhaps due to the lack of complete understanding of the conditions that guarantee that F-rationality 
passes from $R$ to $S$. We will present a generalization of a result of V\'elez (\cite[Theorem~3.1]{Velez}), asserting that F-rationality 
is preserved when $R \to S$ is smooth, by proving that $\dsig(R) = \dsig(S)$ if the closed fiber is \emph{geometrically regular}. Note that $R \to S$ is flat with a geometrically regular fiber if and only 
if $R \to S$ is formally smooth.

There are further results in the literature that concern the transfer of rationality from $R$ to $S$ (\cite{Enescu, Hashimoto,Velez}). In particular, Aberbach and Enescu \cite{AberbachEnescu3} relaxed the assumption to requiring \emph{geometric F-rationality} of the closed fiber.

\begin{lemma}\label{lemma p-degree}
Let $(R, \mf m, k) \to (S, \mf n, \ell)$ be a flat local homomorphism 
of F-finite rings such that the closed fiber $k \to S/\mf m S$ is geometrically regular. 
Then $[k: k^p]=[\ell: \ell^p]$.
\end{lemma}
\begin{proof}
Let $L$ be the fraction field of the regular domain $S/\mf m S$.
Because $L$ is geometrically regular over $k$, it is separable. 
Thus $[L:L^p]=[k: k^p] p^{\trdeg_k L}$.
On the other hand, $\trdeg_k L = \dim S/\mf mS$, so 
$[L : L^p] = [\ell: \ell^p] p^{\trdeg_k L}$ by \cite[2.2]{Kunz2}.
\end{proof}

\begin{theorem}\label{thm dual geometrically regular}
Let $(R, \mf m, k) \to (S, \mf n, \ell)$ be a flat local homomorphism 
of F-finite rings such that the closed fiber $k \to S/\mf m S$ is geometrically regular.
Then $b_e(R) p^{e \dim S/\mf mS} \leq b_e(S)$.
\end{theorem}
\begin{proof}
By a theorem of Andr{\' e} \cite[Page 297]{Andre1},
the homomorphism $R \to S$ is 
regular, i.e., \emph{all} fibers are geometrically regular. Thus, we may apply the Radu--Andr{\' e} theorem (\cite{Andre2, Radu}) 
to learn that the relative Frobenius map $S \otimes F^e_* R \to F^e_* S$ 
is faithfully flat. Note that $S \otimes_R F^e_* R$ is still a local ring due to $F_*^e R$ being purely inseparable 
and that $F^e_* S$ is a finite module over $S \otimes_R F^e_* R$ because $S$ is F-finite. 
It follows that $F^e_* S$ is a free module over $S \otimes_R F^e_* R$.
Its rank can be found after first tensoring with $\otimes_{F_*^e R} F_*^e k$,
which yields the map $S/\mf mS \otimes_k F^e_* k \to F^e_* S/\mf mS$, 
and further tensoring with the quotient field $L$ of $S/\mf mS$ to see that it is enough to compute the rank of $L^{1/p^e}$ over 
$L \otimes_k  k^{1/p^e}$. Since $L$ is separable over $L$, 
this rank is equal to $p^{e\trdeg_k L}$.

By the first paragraph, there is an isomorphism $\oplus^{p^{e\dim S/\mf m S}}(S \otimes_R F^e_* R) \cong F_*^e S$, which implies that
\[
F_*^e \omega_S \cong \Hom_S (F_*^eS, \omega_S )
\cong \oplus^{p^{e\dim S/\mf m S}} \Hom_S (S \otimes_R F^e_* R, \omega_S).
\]
Since $S$ is flat and $S/\mf mS$ is Gorenstein, we have 
$\omega_S = \omega_R \otimes_R S$. This leads to a further isomorphism 
$\Hom_S (S \otimes_R F^e_* R, \omega_S) \cong S \otimes_R F_*^e \omega_R$.
Thus, we can build a surjection 
$F_*^e \omega_S \to \oplus^{b_e(R) p^{e\dim S/\mf m S}} \omega_S$
by tensoring  $F_*^e \omega_R \to \oplus^{b_e(R)} \omega_R \to 0$
with $S$ and taking an appropriate direct sum. 
This finishes the proof.
\end{proof}

\begin{corollary}\label{cor equality geometrically regular}
Let $(R, \mf m, k) \to (S, \mf n, \ell)$ be a flat local homomorphism 
of F-finite rings such that the closed fiber $k \to S/\mf m S$ is geometrically regular.
Then $\dsig(R) = \dsig(S)$.
\end{corollary}
\begin{proof}
We combine Theorem~\ref{thm dual geometrically regular}, 
Proposition~\ref{flat map}, and Corollary~\ref{cor dual exists}.
\end{proof}

\begin{remark}
A special case of the corollary is when the residue field extension 
$k \to \ell$ is separable.  Recall, that a field extension $k \subset L$ is \emph{separable} if $L \otimes_k k^{1/p}$ is still a field 
(hence, equivalently, $L$ is geometrically reduced over $L$) and is \emph{separably generated} if it can be presented as a separable algebraic extension of a purely transcendental extension of $k$. 
The second notion is due to Mac Lane \cite{MacLane} who showed that every finitely generated separable 
extension is separably generated. However, Mac Lane also observed that $L = k(t^{1/p^{\infty}})$ is separable,
but is not separably generated. 
\end{remark}

\subsection{A transformation rule}

In \cite{CRST} it was established 
that for an extension $R \to S$ 
which is {\'e}tale in codimension one
there is a transformation rule connecting F-signatures.
Such rule is impossible for F-rational signature because an example of Singh (\cite[Examples~6.5, 6.6]{Singh} and \cite[Theorem~4.2]{Singh} for the background) shows that F-rationality may not transfer from $R$ to $S$. 

\subsection{Second coefficient}
It was shown in \cite{HMM} that Hilbert--Kunz function has a second coefficient 
in an excellent normal local ring with a perfect residue field.
Subsequent works have shown this result holds with somewhat weaker assumptions: an unpublished manuscript of Hochster and Yao demonstrates that, essentially, only Serre's $(R1)$ condition is needed. For rings over a perfect field, this was also independently shown in \cite{ChanKurano}. 

Using the interpretation via relative Hilbert-Kunz multiplicity, we will now prove a similar result for the dual F-signature. In order to do so, we will follow Huneke's alternative proof from \cite{HunekeSurvey} of the main result in \cite{HMM}. While original stated for normal rings with perfect residue field, a close inspection shows that the treatment in \cite{HunekeSurvey} also applies for merely F-finite rings. In the following result, we also track the dependency of a number of constants in the proof so as to give uniform control over the correction terms. Our careful handling is further motivated by the proof of \cite[Lemma~7.5]{HunekeSurvey}: it appears that it has a small inaccuracy, which we explain and fix, and its last part is left as an exercise, which we believe requires a mild generalization of \cite[Lemma~7.2]{HunekeSurvey}.

\begin{theorem}[Huneke]\label{t Huneke 7.8}
Let $(R, \mf m)$ be an F-finite local normal domain of dimension $d \geq 2$
and characteristic $p> 0$
and let $\ul{x}$ be a fixed system of parameters. 
For any torsion module $N$ there exist a positive constant $C(N)$ with the following property:
for all ideals $\ul{x} \in I$ there exists $\gamma (I, N) \in \mathbb{R}$ such that for all $e \geq 1$
\[
\left |\length (\Tor_1 (N, R/\frq{I})) - \gamma (I, N)p^{e(d-1)}\right | <  C(N)p^{e(d-2)}.
\]
\end{theorem}
\begin{proof}
The result essentially follows from the proof of \cite[Theorem~7.8]{HunekeSurvey}, but the constant $C(N)$ needs to be chosen to work uniformly for all $\ul{x}\in I$. We now carefully trace through the proof and in particular  the preceding results of \cite[Section~7]{HunekeSurvey} to verify this claim.

\begin{step}[Uniform and extended {\cite[Lemma~7.2]{HunekeSurvey}}] 
We will show that for any finitely generated $R$-module torsion module $T$
and a finite generated $R$-module $M$ 
there exists a constant $C_2(T, M)$ such that for all ideals $I$ containing $\ul{x}$ 
\[\length (\Tor_1 (T, M/\frq{I}M)) \leq C_2(T, M) p^{e\dim T}.\]

We start with the case of $M = R$ covered by Huneke. 
In the proof, he shows that
for any finitely generated $R$-module $T$ and any ideal $I$ containing $\ul{x}$ there is a bound
\[
\length (\Tor_1 (T, R/\frq{I})) \leq C(T, \ul{x})p^{e\dim T} + \length (I/{\langle \ul{x} \rangle})\length(T/\frq{\mf m}T)
\leq C(T, \ul{x})p^{e\dim T} + \length (R/{\langle \ul{x} \rangle})\length(T/\frq{\mf m}T),
\] 
where $C(T, \ul{x})$ is given by applying \cite[Theorem~7.3]{HunekeSurvey} 
to the Koszul complex of $\ul{x}$ and does not depend on $I$. 
Since the Hilbert--Kunz function converges \cite{Monsky}, it follows from the above equation that there is a constant $C_2(T)$ 
such that for all ideals $I$ containing $\ul{x}$ we have
\[\length (\Tor_1 (T, R/\frq{I})) \leq C_2(T) p^{e\dim T}.\]

For an arbitrary $M$, we tensor an exact sequence $0 \to \Omega \to \oplus^N R \to M \to 0$ with $R/\frq{I}$ to get an exact sequence
$0 \to \Omega/(\frq{I}\Omega + A_e) \to \oplus^N R/\frq{I} \to M/\frq{I}M \to 0$. After further tensoring with $T$ we now estimate the $\Tor$-module of interest as
\begin{align*}
\length (\Tor_1 (T, M/\frq{I}M)) &\leq N\length (\Tor_1 (T, R/\frq{I}))
+ \length (T \otimes_R \Omega/\frq{I}\Omega)\\
&\leq N C_2(T) p^{e\dim T} + 
\length (T \otimes_R \Omega/\frq{\langle \ul{x}\rangle}\Omega)
\leq N C_2(T) p^{e\dim T} + C p^{e\dim T},
\end{align*}
where the last bound is given by Hilbert--Kunz theory because $\dim \Omega \otimes_R T \leq \dim T$.
\end{step}

\begin{step}[Uniform and extended version of {\cite[Lemma~7.4]{HunekeSurvey}}]
We will show that for any finitely generated $R$-module $T$
of dimension at most $d - 2$ 
and a finitely generated $R$-module $M$ 
there exists a constant $C_4(T, M)$ such that for all ideals $I$ containing $\ul{x}$ 
\[\length (\Tor_2 (T, M/\frq{I}M)) \leq C_4(T, M) p^{e(d-2)}.\]

Following Huneke's proof we deduce 
for any $R$-module $T$ annihilated by a regular sequence $x,y$
and generated by $N$ elements
a bound
\begin{align*}
\length (\Tor_2 (T, M/\frq{I}M)) &\leq N\length (\Tor_2 (R/\langle x,y \rangle, M/\frq{I}M)) + \length (\Tor_1 (T', M/\frq{I}M)), 
\end{align*}
where $T'$ is an $R/\langle x, y\rangle$-syzygy of $T$.
Uusing the Koszul resolution of $R/(x,y)$ we bound 
\begin{align*}
\length (\Tor_2 (T, M/\frq{I}M)) &\leq
N\length (\Tor_1 (R/\langle x,y\rangle, M/\frq{I}M))  + \length (\Tor_1 (T', M/\frq{I}M)). 
\end{align*}
The result of the first step bounds the length of $\Tor$-modules 
and shows that we may take $C_4(T, M) = NC_2(R/\langle x,y \rangle, M) + C_2(T', M)$. 
\end{step}

\begin{step}[Uniform version of {\cite[Lemma~7.5]{HunekeSurvey}}]
We need to show that for any torsion-free finitely generated $R$-module
there exists a constant $C_5(M)$ such that for all ideals $I$ containing $\ul{x}$ 
\[\length (\Tor_1 (M, R/\frq{I})) \leq C_5(M) p^{e(d-2)}.\]

Huneke's proof first passes to the double dual $M^{**}$ by observing that
\begin{align*}
\length (\Tor_1 (M, R/\frq{I})) &\leq \length (\Tor_1 (M^{**}, R/\frq{I})) + \length (\Tor_1 (M^{**}/M, R/\frq{I}))  + \length (\Tor_2 (M^{**}/M, R/\frq{I})) \\
&\leq \length (\Tor_1 (M^{**}, R/\frq{I})) + \left (C_2(M^{**}/M) + C_4(M^{**}/M)\right ) p^{e(d-2)}.
\end{align*}
Thus, we may assume that $M$ is reflexive.

Let $\to F_1 \to F_0 \to M \to 0$ be a part of a free resolution of $M$. 
Let $Z_e$ is the kernel of the induced map
$(F_1 \to F_0) \otimes R/\frq I$ and $B_e$
be the image of the induce map $(F_2 \to F_1) \otimes R/\frq I$.
Because $\Tor_1 (M, R/\frq{I}) = Z_e/B_e$, we derive an exact sequence
\begin{equation}\label{eq Tor formula}
0 \to \Tor_1 (M, R/\frq{I}) \to F_1/B_e \to F_1/Z_e \to 0.    
\end{equation}
By tensoring the exact sequence 
defining the first syzygy $\Omega_1$ of $M$, $F_2 \to F_1 \to \Omega_1 \to 0$,
with $R/\frq{I}$ we identify\footnote{The proof in \cite{HunekeSurvey} seems to claim that $F_1/Z_e \cong \Omega_1/\frq{I}\Omega_1$.}  $F_1/B_e \cong \Omega_1/\frq{I}\Omega_1$.

As explained in Huneke's proof, one can choose a regular sequence $x,y$ 
so that $\langle x,y \rangle$ annihilates all $\Tor_1 (M, \bullet)$.
It follows that tensoring $(\ref{eq Tor formula})$ with $R/\langle x, y\rangle$
yields the bound 
\begin{align*}
\length (\Tor_1 (M, R/\frq{I})) &\leq \length (\Tor_1 (R/\langle x,y \rangle, F_1/Z_e)) + \length (\Omega_1/\langle x,y, \frq{I} \rangle\Omega_1) \\
&\leq \length (\Tor_1 (R/\langle x,y \rangle, F_1/Z_e)) + \length (\Omega_1/\langle x,y, \frq{\langle \ul{x}\rangle } \rangle\Omega_1) \\
&\leq \length (\Tor_1 (R/\langle x,y \rangle, F_1/Z_e)) + C p^{e(d-2)}
\end{align*}
from Hilbert--Kunz theory.
We estimate the remaining $\Tor$-module by tensoring 
$
0 \to F_1/Z_e \to F_0/\frq{I}F_0 \to M/\frq{I}M \to 0$
with $R/\langle x,y \rangle$ and obtain that 
\begin{align*}
\length (\Tor_1 (R/\langle x,y \rangle, F_1/Z_e))
&\leq \length (\Tor_2 (R/\langle x,y \rangle, M/\frq{I}M))
+ \length (\Tor_1 (R/\langle x,y \rangle, F_0/\frq{I}F_0))\\
&\leq 
C_4(R/\langle x,y \rangle, M)p^{e(d-2)} + C_2(R/\langle x,y \rangle, F_0)p^{e(d-2)}.
\end{align*}
The assertion follows. 
\end{step}

\begin{step}[Uniform {\cite[Corollary~7.6]{HunekeSurvey}}]
The assertion follows by replacing \cite[Lemma~7.5]{HunekeSurvey} by its uniform version. Hence
for any $R$-module $M$ and any $i \geq 2$ there exists a constant $C_{6, i}$ such that 
$\length (\Tor_i (M, R/\frq{I})) \leq C_{6,i}(M) p^{e(d-2)}$ for all $\ul{x} \in I$.
\end{step}

\begin{step}[Uniform {\cite[Corollary~7.7]{HunekeSurvey}}]
The proof shows that for an exact sequence
$0 \to T_1 \to T_2 \to T_3 \to 0$ of torsion modules we can bound 
\[
\left |\sum_{i = 1}^3 (-1)^{i+1} \length (\Tor_1 (T_i, R/\frq{I})) \right| \leq 
\length (\Tor_2 (T_3, R/\frq{I})) + \sum_{i = 1}^3 (-1)^{i + 1} \length (T_i/\frq{I}T_i).
\]
Since $R$ is a domain we can find $c \neq 0$ that annihilates $T_1, T_2, T_3$. 
Because Hilbert--Kunz multiplicity is additive in short exact sequences and converges uniformly (\cite[Theorem 3.6]{Tucker}),
by working in $R/(c)$ we can find a constant $D$ such that for all $I$ containing $\ul{x}$ there is a bound 
\[\left |\length (T_3/\frq{I}T_3) + \length (T_1/\frq{I}T_1) - \length (T_2/\frq{I}T_2)\right | < Dp^{e(d-2)}.\] 
Therefore, we take $C_7 = C_{6,2}(T_3) + D$ to bound 
$\left |\sum_{i = 1}^3 (-1)^{i+1} \length (\Tor_1 (T_i, R/\frq{I})) \right| \leq C_7 p^{e(d-2)}$.
\end{step}

\begin{step}[The proof of the assertion]
Last, we trace the proof \cite[Theorem~7.8]{HunekeSurvey} to show that it works in the F-finite case and provides a uniform constant. First, it is explained 
that we may reduce to $N = R/Q$ where $Q$ is a height one prime. Observe that $[k(Q) : k(Q)^{p}] = p^{d-1}[k:k^p]$ by \cite[Corollary~2.7]{Kunz2}, 
so \cite[(10) in the proof of Theorem~7.8]{HunekeSurvey} can be replaced with 
\begin{equation}\label{new (10)}
\left|p^{d-1}[k:k^p] \length\big (\Tor_1 (R/Q, R/\frq{I})\big) - \length \big(\Tor_1 ((R/Q)^{1/p}, R/\frq{I})\big) \right| 
\leq (C_2(T) + C_4(T))p^{e(d-2)}.
\end{equation}

From the long exact sequence for the tensor product we derive 
that the quantity 
\begin{align*}
\length \big (\Tor_1 ((R/Q)^{1/p}, R/\frq{I}) \big) - \length (Q^{1/p} \otimes_R R/\frq{I}) +
\length (R^{1/p} \otimes_R  R/\frq{I}) - \length ((R/Q)^{1/p} \otimes_RR/\frq{I}) 
\end{align*}
is non-negative and is bounded above by 
$\length (\Tor_1 (R^{1/p},  R/\frq{I}))$.
Thus,
\[
\left|
\frac{\length (\Tor_1 ((R/Q)^{1/p}, R/\frq{I}))}{[k: k^p]}
- \length (Q/I^{[p^{e+1}]}Q) +
\length (R/I^{[p^{e+1}]}) - \length (R/\langle Q, I^{[p^{e+1}]}\rangle )
\right| \leq \frac{C_5(R^{1/p})}{[k: k^p]}p^{e(d-2)}.
\]
He notes that the alternating sum of lengths can be computed 
by tensoring $0 \to Q \to R \to R/Q \to 0$ with $R/I^{[p^{e+1}]}$, 
so it follows that
\[
\left|
\frac{\length \big (\Tor_1 ((R/Q)^{1/p}, R/\frq{I}) \big)}{[k: k^p]}
- \length \big(\Tor_1 (R/Q, R/I^{[p^{e+1}]}) \big)
\right| \leq \frac{C_5(R^{1/p})}{[k: k^p]}p^{e(d-2)}.
\]
By plugging this into (\ref{new (10)}) we get that
\[
\left |
p^{d-1} \length\big (\Tor_1 (R/Q, R/\frq{I})\big) -  \length \big(\Tor_1 (R/Q, R/I^{[p^{e+1}]}) \big) \right |
\leq \frac{C_5(R^{1/p}) + C_2(T) + C_4(T)}{[k:k^p]}p^{e(d-2)},
\]
which allows to invoke \cite[Lemma~3.5(iii)]{PolstraTucker}
and deduce the existence of 
\[
\gamma(I, R/Q) := 
\lim_{e \to \infty} \frac{1}{p^{e(d-1)}}\length\big (\Tor_1 (R/Q, R/\frq{I})\big)
\]
and estimate the convergence rate by
\[
\left|\length\big (\Tor_1 (R/Q, R/\frq{I})\big) - p^{e(d-1)} \gamma (I, R/Q)
\right | \leq 2\frac{C_5(R^{1/p}) + C_2(T) + C_4(T)}{[k:k^p]}p^{e(d-2)}.
\]
\end{step}

\end{proof}

\begin{corollary}[{\cite[Proposition~7.9, Corollary~7.10]{HunekeSurvey}}]\label{c Huneke 7.10}
Let $(R, \mf m)$ be an F-finite local normal domain of dimension $d \geq 2$
and characteristic $p > 0$ and $\ul{x}$ be a system of parameters.
Let $M$ be a finitely generated torsion-free $R$-module. 
There exists a constant $C_9(M) \in \mathbb R$ such that 
for any ideal $I$ that contains $\ul{x}$ 
there exists a constant $\gamma(I, M)$ such that 
\[
\left |\length (\Tor_0 (M, R/\frq{I})) - r\length (\Tor_0 (R, R/\frq{I})) - \gamma (I, M)p^{e(d-1)}\right | <  C_9(M)p^{e(d-2)}.
\]

In particular, for any ideal $I$ that contains $\ul{x}$ 
there exists $\gamma(I, R^{1/p}) \in \mathbb R$ such that 
\[
\left |[k:k^p]\length (\Tor_0 (R, R/I^{[p^{e+1}]})) - p^{d}[k:k^p]\length (\Tor_0 (R, R/\frq{I})) - \gamma (I, R^{1/p})p^{e(d-1)}\right | <  C_9(R^{1/p})p^{e(d-2)}.
\]

\end{corollary}
\begin{proof}
Huneke's proof of \cite[Proposition~7.9]{HunekeSurvey} applies verbatim to the first statement by replacing his references to \cite[Lemma~7.5, Theorem~7.8]{HunekeSurvey} by the uniform versions obtained in Theorem~\ref{t Huneke 7.8} and his appeal to the convergence of the Hilbert--Kunz sequence 
by the uniform convergence estimate from \cite[Theorem 3.6]{Tucker}.

The second statement is obtained by taking $M = R^{1/p}$
and noting that its rank is $p^{d}[k:k^p]$ and that 
$
\length (\Tor_0 (R, R^{1/p} \otimes_R R/I^{[p^{e}]}))
= \length (R^{1/p} \otimes_R R/I^{[p^{e}]}) = 
[k : k^p] \length (R/I^{[p^{e+1}]}).
$

\end{proof}

\begin{theorem}\label{t Huneke 7.11}
Let $(R, \mf m)$ be an F-finite local normal domain of dimension $d \geq 2$ and $\ul{x}$ be a system of parameters.
Then there exists a constant $C \geq 0$ such that for every ideal $\ul{x} \in I$ there exists $\beta (I)$ such that
\[
|\length (R/\frq{I}) - \ehk(I)p^{ed} - \beta(I)p^{e(d-1)}| < Cp^{e(d-2)}.
\]
\end{theorem}
\begin{proof}
The assertion is a uniform version of \cite[Theorem~7.11]{HunekeSurvey} and is obtained from its proof by replacing 
$\gamma(R^{1/p})$ with $\gamma(R^{1/p})/[k:k^p]$ in the definition of $\epsilon_q$, replacing the reference to \cite[Corollary~7.10]{HunekeSurvey}
by the uniform estimate in Corollary~\ref{c Huneke 7.10}, 
and quantifying the geometric series trick through \cite[Lemma~3.5(iii)]{PolstraTucker}.
\end{proof}

\begin{corollary}\label{c Huneke bound}
Let $(R, \mf m)$ be an F-finite local normal domain of dimension $d \geq 2$ and $\ul{x}$ be a system of parameters.
Then there exists a constant $C \geq 0$ such that for every ideal $\ul{x} \in I$ there exists $\beta (I)$ such that
\[
|\length (\frq{I}/\frq{\langle \ul x \rangle}) - \left (\ehk(\langle \ul{x} \rangle) - \ehk(I) \right )p^{ed} + \beta(I)p^{e(d-1)}| < 2Cp^{e(d-2)}.
\]
\end{corollary}
\begin{proof}
The statement follows from Theorem~\ref{t Huneke 7.11}
by using the estimates for $\langle \ul{x}\rangle$ and $I$.
\end{proof}

\begin{theorem}\label{t second coefficient}
Let $(R, \mf m, k)$ be an F-finite F-rational local domain of dimension $d \geq 2$.
Then there exists a constant $\beta$ such that
\[
\frac{b_e(R)}{[k:k^{p^e}]}   = \dsig(R)p^{ed} + \beta p^{e(d-1)} + O(p^{e(d-2)}).
\]
\end{theorem}
\begin{proof}
As discussed in Remark~\ref{rem gaps on the socle},
semicontinuity of the relative Hilbert--Kunz multiplicity 
on the Grassmannian of the socle implies that
there exists $\varepsilon > 0$
such that whenever
$\ehk (\langle \ul{x} \rangle) - \ehk(I) < (\csig(R) + \varepsilon) \length (I/\langle \ul{x} \rangle)$
for an ideal $\ul{x} \in I \subseteq \langle \ul{x} \rangle: \mf m$ then
$\ehk (\langle \ul{x} \rangle) - \ehk(I) = \csig(R) \length (I/\langle \ul{x} \rangle)$.

Due to the uniform convergence (Theorem~\ref{Polstra}), there exists $e_0 > 0$ such that for all socle ideals $I$ and all $e \geq e_0$
\[
\left |\frac{\length (\frq{I}/\frq{\langle \ul x \rangle})}{p^{ed}} - \left (\ehk(\langle \ul{x} \rangle) - \ehk(I) \right)p^{ed} + \beta(I)p^{e(d-1)}\right | < \frac{\varepsilon}2.
\]
Hence, if $I, J$ are arbitrary socle ideals such that $\ehk (\langle \ul{x} \rangle) - \ehk(I) = \csig(R) \length (I/\langle \ul{x} \rangle)$
and $\ehk (\langle \ul{x} \rangle) - \ehk(J) > \csig(R) \length (J/\langle \ul{x} \rangle)$, then for all $e \geq e_0$
\[
\frac{\length (\frq{J}/\frq{\langle \ul x \rangle})}{p^{ed} \length (J/\langle \ul{x} \rangle)} > 
\frac{\ehk (\langle \ul{x} \rangle) - \ehk(J)}{ \length (J/\langle \ul{x} \rangle)} - \frac{\varepsilon}2
> \csig(R) +  \frac{\varepsilon}2 > \frac{\length (\frq{I}/\frq{\langle \ul x \rangle})}{p^{ed} \length (I/\langle \ul{x} \rangle)}.
\]
Therefore, by Corollary~\ref{cor dual and relative} for all $e \geq e_0$
\[
b_e(R) = [k:k^{p^e}] \min \left \{\frac{\length (\frq{I}/\frq{\langle \ul x \rangle})}{\length (I/\langle \ul{x} \rangle)} \mid \langle \ul{x} \rangle \subset I \subseteq \langle \ul{x} \rangle: \mf m, \frac{\ehk (\langle \ul{x} \rangle) - \ehk(I)}{\length (I/\langle \ul{x} \rangle) } = \csig(R) \right\} + O(1).
\]

Now, for any socle ideal $I$ such that $\ehk (\langle \ul{x} \rangle) - \ehk(I) = \csig(R) \length (I/\langle \ul{x} \rangle)$
consider the sequence 
\[
c_e(I) = \frac{1}{p^{e(d-1)}}\left (\frac{\length (\frq{I}/\frq{\langle \ul x \rangle})}{\length (I/\langle \ul{x} \rangle)}
- p^{ed} \csig(R)  \right).
\]
By Corollary~\ref{c Huneke bound} this sequence converges uniformly, at the rate $2C/p^e$ independent of $I$, to $\beta(I)/\length (I/\langle \ul{x} \rangle)$. Thus by taking the infimum in the inequality 
\[
c_e(I) - 2C/p^e \leq \beta(I)/\length (I/\langle \ul{x} \rangle) \leq c_e(I) + 2C/p^e
\]
we obtain that $\inf_I c_e(I)$ converges, at the same rate, to $\inf_I \beta(I)/\length (I/\langle \ul{x} \rangle)$. 
Therefore, 
\begin{align*}
\frac{b_e(R)}{[k:k^{p^e}]}  
= \csig(R) p^{ed} + \inf \left \{\frac{\beta(I)}{\length (I/\langle \ul{x} \rangle)}\mid \frac{\ehk (\langle \ul{x} \rangle) - \ehk(I)}{\length (I/\langle \ul{x} \rangle) } = \csig(R) \right\} p^{e(d-1)} + O(p^{e(d-2)}).
\end{align*}
\end{proof}

\begin{corollary}
Let $(R, \mf m, k)$ be an F-finite $\mathbb Q$-Gorenstein F-rational local domain of dimension $d \geq 2$ with a perfect residue field.
Then 
\[
b_e(R) = \dsig(R)p^{ed} + O(p^{e(d-2)}).
\]
\end{corollary}
\begin{proof}
By \cite{Kurano}, the second coefficient $\beta(I)$ is zero for every $\mf m$-primary ideal $I$.
\end{proof}

\section{A formula for toric varieties}\label{Toric}
Besides Gorenstein examples where dual F-signature and F-signature 
coincide, we do not have many examples where the dual F-signature was computed.
In \cite{Sannai} Sannai computed dual F-signature of the Veronese subrings of $k[x,y]$ (Example~\ref{different}). More generally, the results of Nakajima in \cite{Nakajima} can be used for computations in cyclic quotients of $k[x,y]$.
In \cite{HashimotoDual} Hashimoto studied the dual F-signature of invariant subrings and was able to characterize vanishing of $\dsig (\omega_{R})$ representation-theoretically even in the non Cohen-Macaulay case. 
In particular, he showed that $\dsig (\omega_{R^G}) > 1/|G|$ whenever it is positive. 

We suspect that it might be easier to work with the Hilbert--Kunz definition and will now  
discuss F-rational signature in the toric case. Note that cyclic quotient surface singularities are exactly two-dimensional toric singularities (\cite[2.2]{Fulton}). 
Hilbert--Kunz multiplicity of monomial ideals in toric rings  was computed combinatorially by Watanabe (\cite{WatanabeToric}).
We extend his idea and will start with a recipe for computing $\Wsig(\omega_R/N)$ where $\mf m \omega_R \subseteq N$ 
is torus-invariant.

Specifically, consider a lattice $L$ (i.e., a group isomorphic to $\mathbb Z^n$) and a convex rational polyhedral cone $\sigma \subset L \otimes_{\mathbb Z} \mathbb R$. We can always assume that $L = \mathbb Z^n$, but sometimes it is more convenient with work with a proper sublattice of $\mathbb Z^n$. 
Let $M = \Hom (L, \mathbb Z)$ be the dual lattice and the dual cone is defined as 
\[\sigma^\vee = \{u \in M \otimes_{\mathbb Z} \mathbb R \mid \langle u, v \rangle \geq 0, v \in \sigma\},\]
and we let $R = k[\sigma^\vee \cap M]$ to be a monomial subring of a Laurent polynomial ring $k[x_1^{\pm 1}, \ldots, x_n^{\pm 1}] = k[\mathbb Z^n]$. It is invariant under the torus action $x_i \mapsto t_ix_i$ with $t_i \in k^\circ$. 

We say that $\sigma^\vee$ is pointed if $\sigma^\vee \cap -\sigma^\vee = 0$,
or, equivalently, if $\sigma$ spans $L \otimes_{\mathbb Z} \mathbb R$. In this case, $R = k[\sigma^\vee \cap M]$
has a distinguished maximal ideal generated by all non-trivial monomials. It corresponds to the unique 
fixed point of the torus action on the toric variety $\Spec R$.
We will denote this ideal by $\mf m$.

By the work of Hochster \cite{HochsterTori}, $R$ is Cohen-Macaulay and 
it is also known that one can choose a torus-invariant dualizing ideal (\cite{Danilov}) corresponding to the interior $(\sigma^\vee)^\circ$ of the cone. From now on, we will use $\omega_R$ to denote this ideal.


Suppose that $N \subset \omega_R$ is a monomial ideal, 
we identify $N$ with the finitely many monomials in its complement $\omega_R\setminus N$. 
Following the interpretation of $\Wsig(\omega_R/N)$ in Remark~\ref{rmk trace}, 
we are searching for monomials $x^u \in \omega_R^{1/p^e}$ 
such that $\trace_R (r^{1/p^e} x^u) \notin N$ for some $r \in R$.
It is known (see \cite[page 1780]{HsiaoSchwedeZhang}) that $\trace^e$ is a projection on the lattice: for $u \in 1/{p^e} M$
\[
\trace^e (x^u) = \begin{cases}
x^u &\text{ if } u \in M,\\
0 &\text{ otherwise.}
\end{cases}
\]
Hence, we need $x^{u} \in \omega_R^{1/p^e}$ such that $x^{u + v} \in \omega_R \setminus N$ for some $x^v \in R^{1/p^e} = k[1/p^e M \cap \sigma^\vee]$, i.e., 
\[u \in P_e := \bigcup \left \{\frac{1}{p^e}M \cap (\sigma^\vee)^\circ \cap \left (a  + \frac{1}{p^e}M \cap (-\sigma^\vee) \right) \mid a \in \omega_R\setminus N \right\}.\]
Thus, we see that 
\[
\Wsig(\omega_R/N) = \lim_{e \to \infty}
\frac{\dim_{k^{1/p^e}} \omega_R^{1/p^e}/Z_e(I)}{p^{e\dim R}} = \lim_{e \to \infty} \frac{|P_e|}{p^{e\dim R}}.
\]
Since $\dim R = \dim \sigma^\vee$, from the Ehrhart theory we obtain that the limit is the normalized volume of a region:
\begin{align}\label{toric volume equation}
\Wsig(\omega_R/N) =  \vol \left (\bigcup \left \{\sigma^\vee \cap \left (a  - \sigma^\vee \right) \mid a \in \omega_R\setminus N \right\} \right)/\vol(M),
\end{align}
where $\vol(M)$ is the Eucledian volume of an elementary parallelepiped of the lattice.

We will now show that F-rational signature can be computed from the toric quotients only.

\begin{proposition}\label{prop go toric}
If $R = k[\sigma^\vee \cap M]$ is an affine pointed toric ring, then 
\[
\csig(R) = \frac{1}{\vol(M)}\inf_S \frac{1}{|S|}
\vol \left (\bigcup \left \{\sigma^\vee \cap \left (a  - \sigma^\vee \right) \mid a \in S \right\} \right),
\]
where the minimum ranges through all finite non-empty subsets $S$ of points
$a \in (\sigma^\vee)^\circ$. 
Moreover, we may restrict to subsets $S$ consisting of points
$a \in (\sigma^\vee)^\circ$ such that $a - m \notin (\sigma^\vee)^\circ \text{ for all } 0 \neq m \in \sigma^\vee$
(corresponding to monomials in $\omega_R \setminus \mf m \omega_R$).
\end{proposition}
\begin{proof}
It follows from (\ref{toric volume equation}) and the discussion preceding it  that the invariant on the right-hand side 
is the infimum computed over a subset of quotients of $\omega_R$, so 
it is clearly no less than $\sWsig(R) = \csig(R)$.
Furthermore, the right-hand side is independent of the ground field by (\ref{toric volume equation}).
Since $\csig(R)$ cannot increase when the ground field is extended, it is enough to show the equality at some field extension. 
Thus, we may assume that $k$ is algebraically closed. 

Since $\omega_R$ is $T$-invariant, $T$ acts on the Grassmannian of rank $n$ quotients, so 
we follow the construction in Theorem~\ref{t semi on Grassmannian} to show that $\Wsig(\omega_R/N)= 
\Wsig(\omega_R/t \cdot N)$ for any $t \in T$.
Namely, choose a $T$-invariant set $\{r_i\}_{i = 1}^{m_e}$ such that $F_*^e r_i$ generate $F_*^e R$ as an $R$-module and let
\[
g_e(N) \colon F_*^e \omega_R/\frq{\mf m}\omega_R \xrightarrow{\oplus \trace^e(F_*^e r_i \cdot \bullet)} \bigoplus \omega_R/\mf m\omega_R \to \bigoplus \omega_R/N.
\]
Since $T$ acts invertibly and the first map is $T$-invariant, one can easily check that $\ker g_e(t \cdot N) = t \cdot \ker g_e(N)$. Thus $\rank g_e(N) = \rank g_e(t \cdot N)$, so $\Wsig(N) = \Wsig(t \cdot N)$. 

Because $k$ is algebraically closed, the infimum in $\sWsig(R)$ is attained by Corollary~\ref{infimum achieved}. 
Then the locus of the Grassmannian that minimizes $\Wsig(N)$ is $T$-invariant and, because the Grassmannian is projective, we may apply the Borel fixed point theorem (\cite[Theorem 10.4]{Borel}) to conclude that there is a $T$-invariant minimizer. It remains to show that it is monomial.

Now, $k^\circ$ is an infinite cyclic group and we can use its generator $g$ to adapt the  usual argument showing 
that a $T$-invariant ideal $I$ of the Laurent ring is monomial. For any $f \in I$ we can write 
$
f = \sum_{i = -a}^b x_1^i p_i(x_2, \ldots, x_n).
$
For the weight $t = (t_1, 1, \ldots, 1)$ we see that 
\[
t \cdot f = \sum_{i = -a}^b t_1^i x_1^i p_i(x_2, \ldots, x_n) \in I.
\]
We can choose $t_1 = g$, then its powers are all distinct. Because $t^2 \cdot f, \ldots, t^{a + b} \cdot f \in I$, we can use the Vandermonde determinant to show that each term $x_1^i p_i(x_2, \ldots, x_n) \in I$. Repeating the process for $x_2, \ldots, x_n$ we obtain that $I$ is monomial. 
\end{proof}

In the second case, there are only finitely many ideals to check, so we compute the F-rational signature by evaluating all possible subsets.  The volume of the union in (\ref{toric volume equation}) can be computed from the basic volumes $\vol \left( \sigma^\vee \cap \left (a  - \sigma^\vee \right) \right)$ by inclusion-exclusion, but 
this does not give an efficient algorithm.

\begin{example}
It is sometimes easier to work with a sublattice. We can consider the $n$th Veronese subring of the polynomial ring in $d$ variables
as a toric variety for the sublattice $L \subset\mathbb Z^d$ formed by vectors whose sum of components is divisible by $n$
and the positive orthant as the cone. If $e_i$ are a standard basis of $\mathbb Z^d$, then 
$ne_1, e_1 - e_i, i \geq 2$ form a basis of $L$, so we can easily compute that $\vol (L) = n$. 

Integral points in $\omega_V$ are vectors $a = (a_1, \ldots, a_d)$
such that all components are positive and their sum is divisible by $n$.  
One can further see that an integral point $a \in \omega_V \setminus \mf m\omega_V$ is 
such that $\sum a_i = \left \lceil \frac{d}{n} \right \rceil n$, since any smaller sum will have some $a_i = 0$ (so it is not in $\omega_R$) and any larger sum can be decreased without violating positivity of $a_i$ (so it is in $\mf m\omega_R$).
For any such point we can easily compute the volume 
$\vol (\sigma^\vee \cap \left (a  - \sigma^\vee \right)) = \prod a_i$. 
This volume is minimized when all but one components are equal to $1$, 
giving the minimum of $\left\lceil \frac{d}{n} \right\rceil n - d + 1$. 
See Hochster--Yao \cite[Example~7.4]{HochsterYao} for another approach.

It is well-known that there are $\binom{N -1}{d-1}$ integer points such that $a_i > 0, \sum a_i = N$.
If we use all of them in (\ref{toric volume equation}), we obtain a shape which is best described as 
a ``building block pyramid''. The volume of it is equal to the number of integer points with positive coordinates 
such that $\sum a_i \leq N$, which can be seen by identifying each integer unit cube with its vertex with the largest coordinates (e.g., top-right for a square). Hence, the volume of that region is 
\[
\sum_{k = 1}^N \binom{N-1}{d-1} = \binom{N}d.
\]
Thus, using $N =  \lceil \frac{d}{n} \rceil n$, we obtain 
from (\ref{toric volume equation})
that the relative Hilbert--Kunz multiplicity of the entire socle is 
\[
\frac{\binom{N}d}{n \binom{N-1}{d-1}}
= \frac{N}{dn} = \frac{1}{d} \left\lceil \frac{d}{n} \right\rceil.
\]

Note that $\lceil \frac d n\rceil n \geq d$ with equality if and only if $d$ is divisible by $n$, 
which is itself equivalent, by the socle formula, to $V$ being Gorenstein. 
Hence, $\rsig(V) = \csig(V)$ if and only if $V$ is Gorenstein, because 
$\rsig(V) = (\lceil \frac d n\rceil n - d + 1)/n \geq  \frac{1}{d} \left\lceil \frac{d}{n} \right \rceil = \csig(V)$.
\end{example}

\begin{remark}\label{no HY}
The $2$nd Veronese of $k[x,y,z]$ is an example of a singular ring such that $\rsig(V) = 1$.
\end{remark}

\begin{question}
Is $\csig(V_n) = \frac{1}{d} \left\lceil \frac{d}{n} \right \rceil$?
\end{question}

The equality is easy to verify when the Cohen-Macaulay type is close to $d$. 
For example, in the simplest non-Gorenstein case, such as $n = 2$ or $n = d+1$, we have $\lceil \frac{d}{n} \rceil n = d + 1$.
Then $\omega_R \setminus \mf m \omega_R$ has $d$ integer points and they have the form $(1, \ldots, 1, 2, \ldots, 1)$.
By symmetry, any collection of $k$ points will have the volumes $2 + (k-1)$, so the relative Hilbert--Kunz multiplicity is $(k + 1)/(kn)$, which is  minimized for $k = d$. With slightly more effort, one can also verify combinatorially 
the next case $\lceil \frac{d}{n} \rceil n = d + 2$.

\begin{example}
Suppose $C \subseteq \mathbb{R}^3$ is the strongly convex rational polyhedral cone with rays through the points $[0,0,1],[0,2,1],[3,0,1]$ and $[1,-1,1]$. In other words, $C$ is the cone over the polytope pictured below
\begin{center}
\includegraphics[]{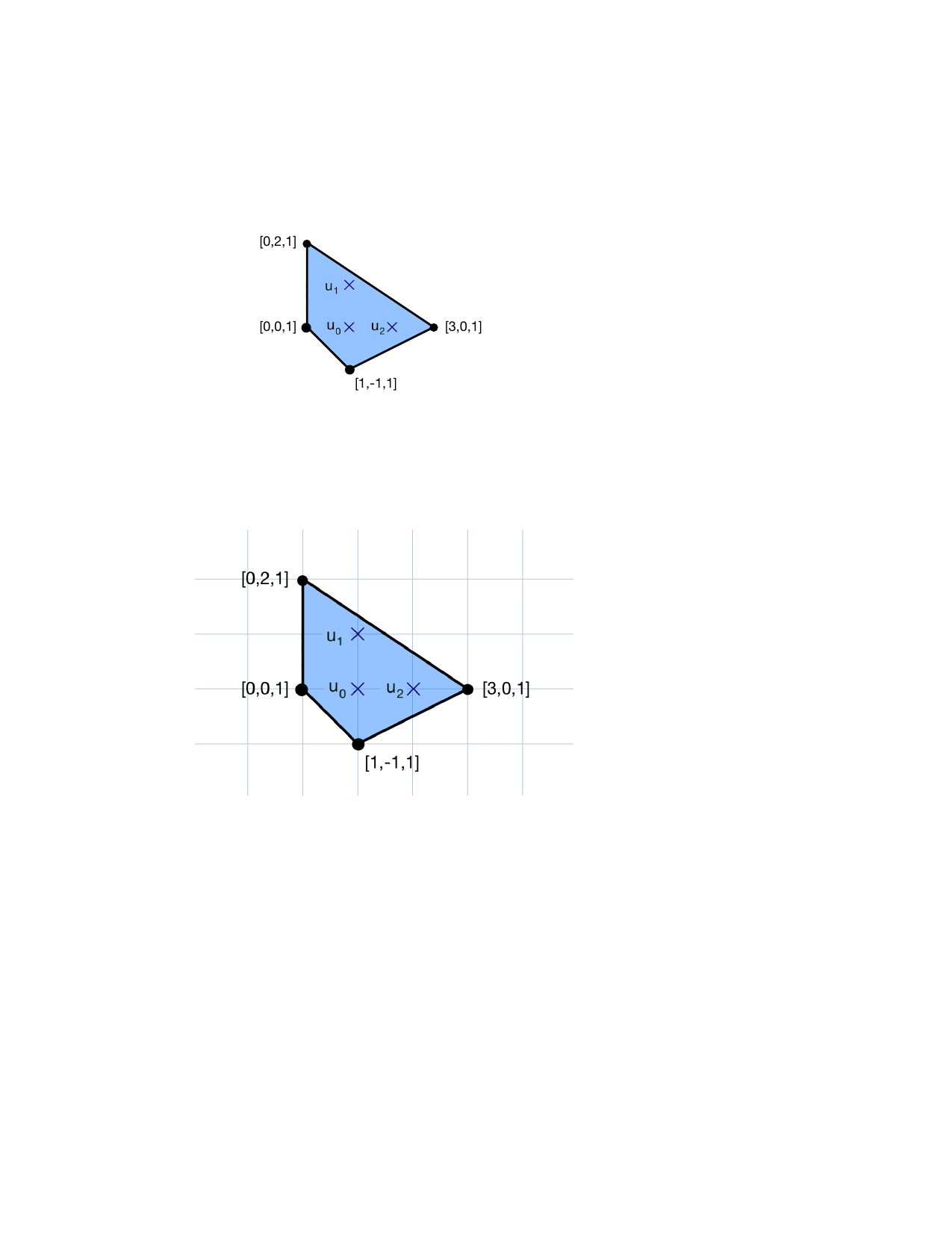}
\end{center}
in the $z = 1$ plane. Let $k$ be any $F$-finite field of characteristic $p > 0$ and consider $R = k[C \cap \mathbb{Z}^n]$ and $\mathfrak{m}$ the homogeneous maximal ideal. We view $\omega_R$ as $k[C^\circ \cap \mathbb{Z}^n]$, the ideal of $R$ generated by the monomials corresponding the interior lattice points of $C$. One checks that $u_0 = [1,0,1], u_1 = [1,1,1], u_2 = [2,0,1]$ correspond to a minimal set of generators for $\omega_R$, \textit{i.e.} the images of the corresponding monomials give a $k$-basis for the vector space $\omega_R / \mathfrak{m} \omega_R$. In particular, we see that the type of $R$ is three.

For each subset of indices $\emptyset \neq S \subseteq \{0,1,2\}$, let $N_S$ be the $R$-submodule of $\omega_R$ generated by $\mathfrak{m} \omega_R$ and $x^{u_i}$ for $i \in \{0,1,2\} \setminus S$. We have that $W_S = \omega_R / N_S$ is then a $k$-vector space with basis given by the images of $x^{u_i}$ for $i \in S$. To make our use of \eqref{toric volume equation} transparent, consider the rational polytopes $P_i = C \cap (u_i - C)$ for $i = 0,1,2$; we then have that $\Wsig(W_S)$ is the Euclidean volume of $\bigcup_{i\in S} P_i$ divided by the number of elements in $S$. One computes
\begin{equation*}
    \begin{array}{l@{\qquad}l@{\qquad}l}
        \Wsig(W_{\{0\}}) = 136/441 & \Wsig(W_{\{0,1\}}) = 187/882 & \Wsig(W_{\{0,1,2\}}) = 101/588 \\
        \Wsig(W_{\{1\}}) = 167/882 & \Wsig(W_{\{0,2\}}) = 89/441 & \\
        \Wsig(W_{\{2\}}) = 80/441 & \Wsig(W_{\{1,2\}}) =  571/3528 & \\
    \end{array}
\end{equation*}
and checks that the smallest value achieved is thus $\dsig(R) = \csig(R) =  571/3528$. In contrast, taking the minimum of the $\Wsig(W_{\{i\}})$ for $i = 0,1,2$ gives $\rsig(R) = 80/441$ which is strictly larger. Moreover, unlike what was seen for Veronese subrings in the previous example, $\csig(R)$ is also not achieved by taking the (normalized) relative Hilbert-Kunz multiplicity for the entire socle modulo a parameter ideal. Explicitly, note that we have
\begin{equation*}
    \Wsig(W_{\{0,1,2\}}) = \Wsig(\omega_R / \mathfrak{m} \omega_R) = \frac{\ehk(I) - \ehk(I:\mathfrak{m})}{\ell\left((I:\mathfrak{m})/(I)\right)} = 101/588
\end{equation*}
for an ideal $I \subseteq \mathfrak{m}$ corresponding to a parameter ideal of $R_\mathfrak{m}$.

\end{example}

\section{Some open questions}
This work opens a number of natural questions inspired by the existing theory of Hilbert--Kunz multiplicity and F-signature. We want to highlight questions that were touched but not resolved in this work.

\subsubsection{Beyond F-finite}
One benefit of $\csig(R)$ is that it is defined via Hilbert--Kunz theory and the definition makes sense for any local ring 
of positive characteristic. We developed Section~\ref{relative} without the F-finite hypothesis and showed that $\csig(R)$ many good properties. However, all further results are tied to the dual F-signature.

\begin{question}
Is $\csig(R)$ semicontinuous without the F-finite assumption? 
\end{question}

We suspect that for this question one needs an interpretation of dual F-signature for non F-finite rings. This is related to the following question, since we would like to get rid of the residue field extension appearing in the definition of the dual F-signature.
\begin{question}
Is $b_e$ always divisible by $[k : k^{p^e}]$?
\end{question}
Note that this will follow if one could remove the constant $C$ from Corollary~\ref{cor dual and relative}.

\subsubsection{Good fibers}
\begin{question}\label{flatty}
If $R \to S$ is a flat local map, under what conditions is $\csig(R) = \csig(S)$?
\end{question}

In particular, it is desirable to show that $\csig (R) = \csig(S)$ for a regular map. 
One way to achieve this would be to reduce to F-finite case by means of the 
so-called \emph{$\Gamma$-construction} \cite{HochsterHuneke2}. 
This motivates the following question.

\begin{question}
If $\Gamma$ varies over the cofinite subsets of a $p$-base of a coefficient field $k$ of $R$, then is $\sup_{\Gamma} \csig(R^\Gamma) = \csig(R)$? Is the supremum achieved, \textit{i.e.} is $\sup_{\Gamma} \csig(R^\Gamma) = \csig(R^{\Gamma'})$ for all sufficiently small $\Gamma'$? 
\end{question}

\noindent
Here, note that we have always have $\csig(R^{\Gamma}) \leq \csig(R)$, and moreover $R$ is $F$-rational if and only if $\csig(R^{\Gamma'}) > 0$ for all sufficiently small $\Gamma'$.

\begin{question}
Let $(R, \mf m, k)$ be a complete F-finite Cohen-Macaulay local ring and $\ell$ be a finite separable field extension of $k$. Do $R$ and $S := R \cotimes{k} \ell$ have equal Sannai's sequences $b_e$?
\end{question}

\subsubsection{Rees algebras}
We explored a connection with Rees algebras in Corollary~\ref{cor Rees}, 
but it is likely that one can say more. 
For example, it was conjectured in \cite{HaraWatanabeYoshida}
and proved in \cite{KK} that for an $\mf m$-primary ideal $I$ 
the extended Rees algebra $R[It, t^{-1}]$ is F-rational if and only if $R$ and 
the Rees algebra $R[It]$ are F-rational. It is desirable to give a connection in terms 
of F-rational signature akin to Corollary~\ref{cor Rees}.

\bibliographystyle{alpha}
\bibliography{over}

\begin{appendix}
\counterwithin{theorem}{section}

\section{A criterion for simultaneous injection of vector spaces}

Throughout this section, we will work with vector spaces over a field $k$. For finite-dimensional vector spaces $V,W$ and subspaces $U \subseteq V$ and $H \subseteq \Hom_k(V,W)$, we denote by $H(U) := \sum_{h \in H} h(U)$ the total image of $U$ under $H$. We shall say that there are $n$ simultaneous injections from $V$ to $W$ in $H$ provided there exist $\phi_1, \ldots, \phi_n \in H$ with 
\begin{equation*}
    \dim( \sum_{i=1}^n \im \phi_i ) = n \dim(V)
\end{equation*}
or equivalently the induced map 
\begin{equation*}
    \Phi = (\phi_1, \ldots, \phi_n) \colon \bigoplus_{i=1}^n V \to W
\end{equation*}
is an injection. Given such an injection, for any $k$-vector subspace $U$ of $V$, we must have
\begin{equation} 
    \label{eq:easyUbound}
    n \cdot \dim(U) = \dim(\Phi(\bigoplus_{i=1}^n U)) = \dim(\sum_{i=1}^n \phi_i(U)) \leq \dim(H(U))
\end{equation}
and so $n \leq \min_{0 \neq U \subseteq V} \lfloor \dim(H(U)) / \dim(U) \rfloor $ where $U$ varies over all of the non-zero subspaces of $V$. The problem we seek to address here is the optimality of this upper bound, and the main technical result of this section is the following criterion.

\begin{theorem}
    \label{thm:linalgcriterion}
     Let $k$ be a field and  suppose $V$ is a finite dimensional vector space over $k$. Then, there exists a positive constant $C$ with the following property: for any finite dimensional vector space $W$ and vector subspace $H \subseteq \Hom_k(V,W)$ we have
     \begin{equation}
        \label{eq:linalgconstant}
         0 \leq \min_{0 \neq U \subseteq V} \left\lfloor \frac{\dim(H(U))}{\dim(U)} \right\rfloor - n \leq C
     \end{equation}
     where $U$ varies over all non-zero subspaces of $V$ and $n$ 
     is the maximal number of simultaneous injections from $V$ to $W$ in $H$. In other words, $n$ is the largest integer so that there exist $\phi_1, \ldots, \phi_n \in H$ where
     \begin{equation*}
         \Phi = (\phi_1, \ldots, \phi_n) \colon \bigoplus^n V \to W
     \end{equation*}
     is an injection.
\end{theorem}

\begin{remark}
    Our proof will show that the constant $C$ appearing \autoref{thm:linalgcriterion} can be taken to be 
    \begin{equation*}
        C = \sum_{i=1}^{\dim(V)-1} i (\dim(V)-i)\cdot \dim(V) = \frac{1}{6} \left(\dim(V)\right)^2 \left((\dim(V))^2 - 1\right)
    \end{equation*}
    independently of the ground field $k$. However, we believe this bound to be far from optimal in many cases. In particular, when working over an infinite field $k$ and using general $k$ linear combinations of maps in $H$ appropriately, we believe it is possible to exhibit a quadratic bound in terms of $\dim(V)$.
    Moreover, we also have the following important result when $\dim(V) = 2$.
\end{remark}

\begin{theorem}
    \label{thm:dimtwocriterion}
    Let $k$ be a field and suppose $V$ is a two-dimensional vector space over $k$. For any finite-dimensional vector space $W$ and vector subspace $H \subseteq \Hom_k(V,W)$, we have
    \begin{equation*}
        n = \min_{0 \neq U \subseteq V} \left\lfloor \frac{\dim(H(U))}{\dim(U)} \right\rfloor
    \end{equation*}
    where $U$ varies over all non-zero subspaces of $V$ and $n$ is the maximal number of simultaneous injections from $V$ to $W$ in $H$. In other words, $n$ is the largest integer so that there exist $\phi_1, \ldots, \phi_n \in H$ where
    \begin{equation*}
        \Phi = (\phi_1, \ldots, \phi_n) \colon \bigoplus^n V \to W
    \end{equation*}
    is an injection.
\end{theorem}

The following example shows that the assumption $\dim(V) = 2$ is essential in \autoref{thm:dimtwocriterion}.

\begin{example}
    Let $k$ be an arbitrary field and set $V = W = k^{\oplus 3}$ with standard basis vectors $\vec{e}_1, \vec{e}_2, \vec{e}_3$.
    Consider the linear transformations from $V$ to $W$ given by the matrices 
    \[
    f= \begin{bmatrix}
    1 & 0 & 0\\
    0 & 1 & 0\\
    0 & 0 & 0
    \end{bmatrix},
    g= \begin{bmatrix}
    0 & 0 & 0\\
    0 & 0 & -1\\
    1 & 0 & 0
    \end{bmatrix},
    h= \begin{bmatrix}
    0 & 0 & 1\\
    0 & 0 & 0\\
    0 & 1 & 0
    \end{bmatrix}
    \]
    and let $H$ be the linear span of $f,g,h$ in $\Hom_k(V,W)$.  
    Note that there are no injections in $H$ at all, since for all $\lambda, \mu, \nu \in k$ we have
    \begin{equation*}
        \det(\lambda f + \mu g + \nu h) = \det \begin{bmatrix}
            \lambda & 0 & \nu\\
            0 & \lambda & -\mu\\
            \mu & \nu & 0
            \end{bmatrix} = 0.
    \end{equation*}
    Nevertheless, we will show $ \min_{0 \neq U \subseteq V}\left\lfloor \dim(H(U))/\dim(U) \right\rfloor = 1$. Indeed, if $0 \neq \vec{v} = a \vec{e}_1 + b \vec{e}_2 + c \vec{e}_3$ so that $U = k\cdot \vec{v}$ has dimension one, then $H(U)$ is the column space of the singular matrix
    \begin{equation*}
        \begin{bmatrix}
            a & 0 & c \\
            b & -c & 0 \\
            0 & a & b
        \end{bmatrix}
    \end{equation*}
    whose two by two minors include among them $a^2$, $b^2$, and $c^2$ and are not all zero. Thus, if $U \subseteq V$ has dimension one, we have $\dim(H(U)) = \lfloor \dim(H(U)) / \dim(U) \rfloor = 2$. If instead $U \subseteq V$ has dimension two,\footnote{In fact, it is easily checked that $H(U) = W$ for every $U \subseteq V$ of dimension two.} taking $0 \neq \vec{v} \in V$ it follows that $\dim(H(U)) \geq \dim(H(k \cdot \vec{v})) = 2$ and so $\lfloor \dim(H(U)) / \dim(U) \rfloor = 1$. Finally, if $U = V$ has dimension three, then clearly  $\lfloor \dim(H(U)) / \dim(U) \rfloor = 1$ as $H(V)=W$. 
    \end{example}

    \begin{remark}
    If $k$ is infinite, then 
    in the setting of \autoref{thm:linalgcriterion}, one expects there to be smaller (in terms of $\dim V$) optimal constants $C$. This is because new injections can appear in $H$ after extending from a finite field. For example, over the field $F_2 = \mathbb{Z}/2\mathbb{Z}$, let $V = W = F_2^{\oplus 3}$ and consider the subspace
    \begin{equation*}
        H = \left \{
        A = \begin{bmatrix}
        1 & 0 & 0\\
        0 & 1 & 0\\
        0 & 0 & 0
        \end{bmatrix},
        B = \begin{bmatrix}
        1 & 0 & 0\\
        1 & 0 & 0\\
        0 & 0 & 1
        \end{bmatrix},
        A + B = \begin{bmatrix}
        0 & 0 & 0\\
        1 & 1 & 0\\
        0 & 0 & 1
        \end{bmatrix}
        \right \}
    \end{equation*}
    of $\Hom_{F_2}(V,W)$. While there is no injection in $H$, after extending to $F_4 = F_2[x]/(x^2 + x + 1)$ we see that
    \begin{equation*}
        xA + B = \begin{bmatrix}
            x+1 & 0 & 0\\
            1 & x & 0\\
            0 & 0 & 1
            \end{bmatrix}
    \end{equation*}
    is injective.
    \end{remark}

    Turning first towards a proof of \autoref{thm:dimtwocriterion}, we start with an elementary lemma.
    
    \begin{lemma}\label{finite field patching}
    Let $V,W$ be two finite dimensional vector spaces over a field $k$. 
    If $\phi, \xi \in \Hom_k (V, W)$ are such that $\im \phi \cap \im \xi = 0$,
    then $\rank (\phi + \xi) \geq \rank \phi$ 
    and the inequality is strict provided there exists $\vec{v} \in \ker \phi$ 
    such that $\xi(\vec{v}) \neq 0$. 
    \end{lemma}
    \begin{proof}
    If $\vec{v} \in \ker (\phi + \xi)$, then $\phi(\vec{v}) = -\xi({v}) \in \im \phi \cap \im \psi = 0$. Thus, we see that $\ker (\phi + \xi) = (\ker \phi \cap \ker \xi) \subseteq \ker \phi$ and the desired inequality $\rank(\phi+\xi) \geq \rank \phi$ follows and is strict provided $\ker \phi \cap \ker \xi \subsetneq \ker \phi$.
    \end{proof}

    \begin{proposition}
        \label{prop:twodiminjections}
        Let $k$ be a field, $V$ a two-dimensional $k$-vector space, and $n \geq 0$ is an integer. 
        Let $W$ be a finite-dimensional $k$-vector space and 
        $H$ a subspace of $\Hom (V,W)$ such that for every $0 \neq U \subseteq V$ we have
        $\dim H(U) \geq n\dim U$. Then there is an injection $\oplus^n V \to W$, 
        where each component is in $H$.
        \end{proposition}

{
\begin{proof}
    If $n = 0$ the statement holds trivially, and so we proceed by induction. So, we assume that $\dim H(U) \geq (n+1) \dim U$ for all $0 \neq U \subseteq V$ and  
    we are given independent injections $\phi_1, \ldots, \phi_n$. Let $W' = W / (\sum_{i=1}^n \im \phi_i)$, and for any $\phi \in \Hom_k(V,W)$ we shall denote by $\phi' \in \Hom_k(V,W')$ the map $V \xrightarrow{\phi} W \to W'$ induced by quotienting out by $\sum_{i=1}^n \im \phi_i$. Because $\dim H(V) \geq 2n + 2$, there is a $\psi \in H$ with $\rank \psi' > 0$. If $\rank \psi' = 2$ we are done, so assume $\rank \psi' = 1$. Using that $\dim H(V) \geq 2n + 2$, we can find $\xi \in H$ with $\im \xi' \not\subseteq \im \psi'$. Again, if $\rank \xi' = 2$ then we are done, so assume $\rank \xi' = 1$. It follows that $\im \psi' \cap \im \xi' = 0$. If $\ker \psi' \neq \ker \xi'$, then $\rank(\psi' + \xi') = 2$ using \autoref{finite field patching} and again we are done, so assume $K = \ker \psi' = \ker \xi'$ and take $0 \neq \vec{v} \in K$. Since $\dim K = 1$, we have $\dim H(K) \geq n + 1 > \dim (\sum_{i=1}^n \phi_i(K))$. Take $\mu \in H$ so that $\mu(\vec{v}) \not\in \sum_{i=1}^n \phi_i(K)$. 
    
    \setcounter{case}{0}
    \begin{case}
        $\mu(\vec{v}) \not\in \sum_{i=1}^n \im \phi_i$.
    \end{case}

If $\rank(\mu') = 2$, we are done, so assume $\rank(\mu') = 1$. Since $\im(\psi') \neq \im(\xi')$, it follows that at least one of $\im(\mu') \neq \im(\psi')$ or $\im(\mu') \neq \im(\xi')$ is satisfied. If $\im(\mu') \neq \im(\psi')$, then we have $\im(\mu') \cap \im(\psi') = 0$ and $\vec{v} \in \ker(\psi')\setminus \ker(\mu')$, so it follows $\psi' + \mu'$ has rank $2$ using \autoref{finite field patching}. Thus, we are done as $\phi_1, \ldots, \phi_n, \psi + \mu$ give the required $n+1$ simultaneous injections from $V$ to $W$ in $H$.
Similarly, in the other case where $\im(\mu') \neq \im(\xi')$, we have $\im(\mu') \cap \im(\xi') = 0$ and $\vec{v} \in \ker(\xi')\setminus \ker(\mu')$, so it follows using \autoref{finite field patching} that $\phi_1, \ldots, \phi_n, \xi + \mu$ give the required $n+1$ simultaneous injections from $V$ to $W$ in $H$.

    \begin{case}
        $\mu(\vec{v}) \in \sum_{i=1}^n \im \phi_i$.
    \end{case}
  
    Note first that in order for this case to occur, we must have $\sum_{i=1}^n \im \phi_i \neq 0$, which necessitates $n \geq 1$. Since $\mu(\vec{v}) \not\in \sum_{i=1}^n \phi_i(K)$, after reordering the $\phi_i$ we may arrange so that $\mu(\vec{v}) \not\in \phi_1(K) + \sum_{i=2}^n \im \phi_i$. Let $\overline{W} = W / (\sum_{i=2}^n \im \phi_i)$, and for any $\phi \in \Hom_k(V,W)$ we shall denote by $\overline{\phi} \in \Hom_k(V,\overline{W})$ the map $V \xrightarrow{\phi} W \to \overline{W}$ induced by quotienting out by $\sum_{i=2}^n \im \phi_i$. We have $\overline{\mu}(\vec{v}) \in \overline{\phi_1}(V) \setminus \overline{\phi_1}(K)$, so there is some $\vec{u} \in V \setminus K$ with $\overline{\mu}(\vec{v}) =\overline{\phi_1}(\vec{u})$. Note in particular that $\vec{u}, \vec{v}$ are a basis of $V$. Since $\im(\xi') \cap \im(\psi')=0$ and $\vec{u} \not\in K$, we have that $\xi'(\vec{u})$ and $\psi'(\vec{u})$ must be linearly independent. Thus, we see that $\overline{\phi_1}(\vec{u}),\overline{\phi_1}(\vec{v}), \overline{\psi}(\vec{u}), \overline{\xi}(\vec{u})$ are linearly independent in $\overline{W}$. Extending this latter set of vectors to a basis of $\overline{W}$, the matrices of these linear transformations with respect to these bases have the form 
    \begin{equation*}
       \overline{\phi_1} = \begin{bmatrix}
            1 & 0 \\
            0 & 1 \\
            0 & 0 \\
            0 & 0 \\
            \vdots & \vdots
        \end{bmatrix} \quad
        \overline{\psi} = \begin{bmatrix}
            0 & a \\
            0 & b \\
            1 & 0 \\
            0 & 0 \\
            \vdots & \vdots
        \end{bmatrix} \quad 
        \overline{\xi} = \begin{bmatrix}
            0 & c \\
            0 & d \\
            0 & 0 \\
            1 & 0 \\
            \vdots & \vdots
        \end{bmatrix}
        \quad 
        \overline{\mu} = \begin{bmatrix}
            e & 1 \\
            f & 0 \\
            g & 0 \\
            h & 0 \\
            \vdots & \vdots
        \end{bmatrix}
    \end{equation*}
    where $a,b,c,d,e,f,g,h \in k$. 
    
    With respect to the given basis of $\overline{W}$, $\im (\overline{\phi_1} + \overline{\xi}) + \im \overline{\psi}$ is identified with the column space of the matrix
    \begin{equation*}
    \begin{bmatrix}
    1 &  c & 0 & a\\
    0 & 1+ d & 0 & b\\
    0 & 0 & 1 & 0 \\ 1 & 0 & 0 & 0\\
    \vdots & \vdots & \vdots & \vdots
    \end{bmatrix}
    \end{equation*}
    whose top minor computes to $-a  + bc  -ad$. Thus, if $a \neq bc-ad$, then the assertion follows as $\phi_2, \dots, \phi_n,\phi_1 + \xi, \psi$ give the required $n+1$ simultaneous injections.  Similarly, if $c \neq ad - bc$, then $\phi_2, \dots, \phi_n,\phi_1 + \psi, \xi$ give the required $n+1$ simultaneous injections.
    So we now assume $a = bc-ad = -c$.

    With respect to the given basis of $\overline{W}$, $\im (\overline{\phi_1} + \overline{\psi}) + \im \overline{\mu}$ is identified with the column space of the matrix
    \begin{equation*}
    \begin{bmatrix}
    1 &  a & e & 1\\
    0 & 1+ b & f & 0\\
    1 & 0 & g & 0 \\ 0 & 0 & h & 0\\
    \vdots & \vdots & \vdots & \vdots
    \end{bmatrix}
    \end{equation*}
    whose top minor computes to $h + bh$. Thus, if $h+bh \neq 0$, then $\phi_2, \dots, \phi_n,\phi_1 + \psi, \mu$ give the required $n+1$ simultaneous injections. %
    Similarly, the assertion follows if $-g - dg\neq 0$, with $\phi_2, \dots, \phi_n,\phi_1 + \xi, \mu$ giving the required $n+1$ simultaneous injections.
    So we now assume $h+bh = 0 = g+dg$.

    With respect to the given basis of $\overline{W}$, $\im (\overline{\phi_1} + \overline{\xi}) + \im (\overline{\psi} + \overline{\mu})$ is identified with the column space of the matrix
    \begin{equation*}
    \begin{bmatrix}
    1 &  c & e & a+1\\
    0 & 1+ d & f & b\\
    0 & 0 & g+1 & 0 \\ 1 & 0 & h & 0\\
    \vdots & \vdots & \vdots & \vdots
    \end{bmatrix}
    \end{equation*}
    whose top minor computes to $(g+1)(bc-ad-a-d-1)$. Using the relations above, this simplifies to$-d-1$. Thus the assertion follows unless $d = -1$, since $\phi_2, \dots, \phi_n,\phi_1 + \xi, \psi + \mu$ give the required $n+1$ simultaneous injections. Similarly, if $b \neq -1$, we may use $\phi_2, \dots, \phi_n,\phi_1 + \psi, \xi + \mu$ as the required $n+1$ simultaneous injections. So we now assume $b = d = -1$, which also gives that $a = -c+a = -c$ and so $a = c = 0$.

    Now, with respect to the given basis of $\overline{W}$, $\im \overline{\psi} + \im \overline{\mu}$ is identified with the column space of the matrix
    \begin{equation*}
    \begin{bmatrix}
    0 &  0 & e & 1\\
    0 & -1 & f & 0\\
    1 & 0 & g & 0 \\ 0 & 0 & h & 0\\
    \vdots & \vdots & \vdots & \vdots
    \end{bmatrix}
    \end{equation*}
    whose top minor computes to $-h$. Thus, if $h \neq 0$ we are done as $\phi_2, \dots, \phi_n,\psi, \mu$ give the required $n+1$ simultaneous injections. Similarly, if $g \neq 0$ we are done as $\phi_2, \dots, \phi_n,\xi, \mu$ give the required $n+1$ simultaneous injections. Thus, we assume $g = h = 0$.

    Finally, we now have that $\im \overline{\psi} + \im (\overline{\mu}+\overline{\xi})$ is identified with the column space of the matrix
    \begin{equation*}
    \begin{bmatrix}
    0 &  0 & e & 1\\
    0 & -1 & f & -1\\
    1 & 0 & 0 & 0 \\ 0 & 0 & 1 & 0\\
    \vdots & \vdots & \vdots & \vdots
    \end{bmatrix}
    \end{equation*}
    whose top minor computes to $-1$. Thus, $\phi_2, \dots, \phi_n,\psi, \mu+\xi$ give the required $n+1$ simultaneous injections in this case and the exhaustive proof of the assertion is finished. 
\end{proof}
}

\begin{proof}[Proof of \autoref{thm:dimtwocriterion}]
    From \eqref{eq:easyUbound}, we have that $n \dim(U) \leq \dim H(U)$ for all subspaces $0 \neq U \subseteq V$ and hence $n \leq \min_{0 \neq U \subseteq V} \lfloor \frac{\dim H(U)}{\dim(U)}\rfloor$. Moreover, we must have $\dim H(U_0) < (n+1)\dim(U_0)$ for some $0 \neq U_0 \subseteq V$ by \autoref{prop:twodiminjections}. Altogether this gives  
    \begin{equation*}
       n \leq \min_{0 \neq U \subseteq V} \left\lfloor \frac{\dim(H(U))}{\dim(U)} \right\rfloor \leq    \frac{\dim(H(U_0))}{\dim(U_0)} \leq n
    \end{equation*}
    and so equality must hold throughout completing the proof.
\end{proof}

Our proof of \autoref{thm:linalgcriterion} follows roughly the same framework as the proof of \autoref{thm:dimtwocriterion} above, though the requisite inductive constructions that follow in \autoref{thm:finitefieldinductionstep} and \autoref{cor d induction step} are quite a bit more involved than that of \autoref{prop:twodiminjections}. Additionally, the elementary result below will also be used to avoid using general linear combinations over finite fields.

    \begin{lemma}\label{lemma disjoint}
    Let $U, W$ be vector spaces over a field $k$. 
    Suppose that $\phi_1, \ldots, \phi_N \in \Hom (U, W)$ are such that 
    $\Phi = (\phi_1, \ldots, \phi_N) \colon \bigoplus_{i=1}^N U \to W$ is an injection. 
    If $Z$ is a subspace of $W$ such that $\dim(Z \cap \im \Phi) = d \leq N$,
    then omitting some $d$ of the $\phi_1, \ldots, \phi_N$ will yield an injection $\bigoplus^{N-d} U \to W$ with image disjoint from $Z$. In other words, after reordering $\phi_1, \ldots, \phi_N$, one can ensure that $Z \cap (\sum_{i=d+1}^N \im \phi_i) = 0$.
    \end{lemma}
    \begin{proof}
    We proceed by induction on $d$, noting first that the lemma is a tautology when $d = 0$. Now, assume the statement holds for all $0 \leq n < d$ and we have
    an injection $\Phi = (\phi_1, \ldots, \phi_N) \colon \bigoplus_{i=1}^N U \to W$ and a subspace $Z \subseteq W$ with $\dim(Z \cap \sum_{i=1}^N \im \phi_i) = d \leq N$. Let $0 \neq \vec{v} \in (Z \cap \sum_{i=1}^N \im \phi_i)$. Let $W_j = \sum_{\substack{i=1\\i \neq j}}^N \im \phi_i$. Since $\bigcap_{i=1}^N W_j = 0$, it follows that $\vec{v} \not\in W_j$ for some $j$. After reordering, we may assume $\vec{v} \not\in W_1$. In particular, it follows that $\Phi' := (\phi_2, \ldots, \phi_N) \colon \bigoplus_{i=1}^N U \to W$ is an injection with $Z \cap \im \Phi' = Z \cap W_1 \subsetneq Z \cap \im \Phi$ so that $\dim(Z \cap \im \Phi') = n \leq d-1 \leq N-1$. Using the induction assumption on $\Phi'$ and $Z$, it follows that we can reorder $\phi_2, \ldots, \phi_N$ to achieve $0 =  \left( Z \cap (\sum_{i=n+2}^N \im \phi_i) \right) \supseteq \left( Z \cap (\sum_{i=d+1}^N \im \phi_i) \right)$ as desired.
    \end{proof}

    \begin{theorem}\label{thm:finitefieldinductionstep}
    Let $V$ and $W$ be finite dimensional vector spaces over a field $k$, and $H$ a subspace of $\Hom_k (V, W)$. Suppose $n \geq 0$ and $1 \leq d < \dim V$ are integers and assume the following conditions are satisfied.
    \begin{enumerate}
    \item There exist $\phi_1, \ldots, \phi_n \in H$
    giving an injection $(\phi_1, \ldots, \phi_n) \colon \bigoplus_{i=1}^n V \to W$.
    \item We have $\dim(H(U)) > n \dim(U)$ for any non-zero subspace $0 \neq U \subseteq V$. 
    \item Writing $m := (\dim(V) - d)\cdot \dim(V) + 1$, there exist $\psi_1, \ldots, \psi_m \in H$ so that 
    \begin{equation*}
        \dim(\sum_{i=1}^n \im \phi_i + \sum_{j=1}^{\ell} \im \psi_j) \geq d + \dim(\sum_{i=1}^n \im \phi_i + \sum_{j=1}^{\ell - 1} \im \psi_j)
    \end{equation*}
    for $\ell = 1, \ldots, m$. In other words, we have that each $\psi_\ell$ has rank at least $d$ modulo $\sum_{i=1}^n \phi_i + \sum_{j=1}^{\ell - 1} \im \psi_j$.
    \end{enumerate}
    Then there are maps $\tilde \phi_1, \ldots, \tilde \phi_n, \psi \in H$ so that $\tilde \Phi = (\tilde \phi_1, \ldots, \tilde \phi_n) \colon \bigoplus_{i=1}^nV \to W$ is an injection and $\psi$ has rank at least $d+1$ modulo $\im \tilde{\Phi}$, \textit{i.e.}
    \begin{equation*}
        \dim(\im \psi + \sum_{i=1}^n \im \tilde \phi_i) \geq d + 1 + \dim(\sum_{i=1}^n \im \tilde \phi_i).
    \end{equation*}
    \end{theorem}

    \begin{proof}
    In order to have the rank at least $d$ modulo $\sum_{i=1}^n \phi_i + \sum_{j=1}^{\ell - 1} \im \psi_j$, each $\psi_\ell$ must necessarily have rank at least $d$ modulo $\sum_{i=1}^n \phi_i$. The assertion follows trivially if any $\psi_\ell$ has rank at least $d+1$ modulo $\sum_{i=1}^n \phi_i$, so we may assume each $\psi_\ell$ has rank exactly $d$ modulo either of $\sum_{i=1}^n \phi_i$ or $\sum_{i=1}^n \phi_i + \sum_{j=1}^{\ell - 1} \im \psi_j$.

    Let $W' = W / (\sum_{i=1}^n \im \phi_i)$, and for any $\phi \in \Hom_k(V,W)$ we shall denote by $\phi' \in \Hom_k(V,W')$ the map $V \xrightarrow{\phi} W \to W'$ induced by quotienting out by $\sum_{i=1}^n \im \phi_i$. Since $\rank \psi_\ell' = d$ and does not change modulo $\sum_{j=1}^{\ell -1} \im \psi_j'$, we have $\im \psi_\ell' \cap \left(\sum_{j=1}^{\ell -1} \im \psi_j' \right) = 0$ for any $1 \leq \ell \leq m$ and in particular $\im \psi_i' \cap \im \psi_j' = 0$ for any $i \neq j$. If ever $\ker \psi_i' \neq \ker \psi_j'$ for some $i \neq j$, then $\rank (\psi_i' + \psi_j') > d$ by \autoref{finite field patching} and the assertion follows. Hence, assume now that all of these kernels are equal, and set $K = \ker \psi_\ell'$ for all $1 \leq \ell \leq m$. 
    
    Let $U$ be a vector space complement of $K$ in $V$, so that $\dim(U) = d = \dim(V) - \dim(K)$ and $V = U + K$ with $U \cap K = 0$. 
    Observe that $\im \psi_\ell' = \psi_\ell'(U)$ so the restriction of each $\psi_\ell'$ to $U$ is injective as $\rank \psi_\ell' = \dim(U) = d$. Moreover, setting $\Psi' = (\psi_1', \ldots,\psi_m') \colon \oplus_{j=1}^m V \to W'$, we similarly have that $\Psi'\vert_{\oplus_{j=1}^m U}$ is an injection as $\dim(\sum_{j=1}^m \psi_j'(U)) = dm$. In particular, it follows each $\psi_\ell'$ has rank $d$ modulo $\sum_{\substack{j=1 \\ j \neq \ell}}^m \im \psi_j'$, \textit{i.e.} $\im \psi_\ell \cap (\sum_{\substack{j=1 \\j\neq \ell}}^m \im \psi_j') = 0$. Note also we may permute $\psi_1, \ldots, \psi_m$ as needed below while preserving our setup. 
    
    

    Suppose first that $H(K) \not\subseteq \sum_{i=1}^n \im \phi_i$, and take $h \in H$ and $\vec{v} \in K$ with $h(\vec{v}) \not\in \sum_{i=1}^n \im \phi_i$ so that $h'(\vec{v}) \neq 0$. 
    Since $\im \psi_\ell$ are disjoint from each other, 
    at most $\rank h' \leq \dim(V) < m$ of the $\im \psi_\ell'$ can intersect $\im h'$ non-trivially, so after reordering we may assume $\im \psi_1' \cap \im h' = 0$ (as in the proof of \autoref{lemma disjoint}). By \autoref{finite field patching}, $\rank (\psi_1' + h') > \rank \psi_1' = d$ and the assertion follows. Thus, we may assume going forward that $H(K) \subseteq \sum_{i=1}^n \im \phi_i$. In particular, observe that this implies $\sum_{i=1}^n \im \phi_i \neq 0$ so we must have $n \geq 1$.
    We break the rest of the proof up into two further cases to be analyzed separately.

    \setcounter{case}{0}
    \begin{case}
        There is some $1 \leq \ell \leq m$ with $\psi_\ell(K) \not\subseteq \left( \sum_{i=1}^n \phi_i(K)\right)$.
    \end{case}

    Because $\psi_\ell(K) \subseteq \sum_{i=1}^n \im \phi_i \cong \bigoplus_{i=1}^n V$ and an element of $\bigoplus_{i=1}^n V$ is in $\bigoplus_{i=1}^n K$ if and only if each of the components is in $K$, we may reorder the $\phi_i$ so that $\psi_\ell(K) \not\subseteq \left(\phi_1(K) + \sum_{i=2}^n \im \phi_i\right)$ and choose $\vec{v} \in K$ with $\psi_\ell(\vec{v}) \not\in \left(\phi_1(K) + \sum_{i=2}^n \im \phi_i\right)$. Let $\overline{W} = W / (\sum_{i=2}^n \im \phi_i)$, and for any $\phi \in \Hom_k(V,W)$ we shall denote by $\overline{\phi} \in \Hom_k(V,\overline{W})$ the map $V \xrightarrow{\phi} W \to \overline{W}$ induced by quotienting out by $\sum_{i=2}^n \im \phi_i$. Since $\dim(\Hom_k(K,\im \overline{\phi_1})) = \dim(K)\cdot \dim(V) = (\dim(V)-d)\cdot \dim(V) < m$, there must be a nontrivial linear combination $\xi = \sum_{i=1}^m \alpha_i \psi_i$ with $\alpha_1, \ldots, \alpha_m \in k$ not all zero and $K \subseteq \ker \overline{\xi}$. 
    If $0 \neq \vec{u} \in U$, then $\xi'(\vec{u}) = \Psi'(\alpha_1 \vec{u}, \ldots, \alpha_m \vec{u}) \neq 0$ as $\Psi'\vert_{\oplus_{j=1}^m U}$ is an injection and $\alpha_j \neq 0$ for some $j$. Thus, $\overline{\xi}(\vec{u}) \neq 0$ as well, so it follows $\overline{\xi}|_U$ is an injection, $\rank \overline{\xi} = d$, and $K = \ker \overline{\xi}$. Moreover, since $\ker \overline{\psi_\ell} \neq K$, $\xi$ cannot be a scalar multiple of $\psi_\ell$ and we must have that $\alpha_j \neq 0$ for some $j \neq \ell$.
    
    We will now show that $\tilde\phi_1 := \phi_1 + \xi, \tilde\phi_2 := \phi_2, \ldots, \tilde\phi_n := \phi_n$ and $\psi := \psi_\ell$ are the required maps. Let us first check that $\im \overline{\xi} \cap (\im \overline{\phi_1} + \im \overline{\psi_\ell}) = 0$. Since $\im \overline{\xi} = \overline{\xi}(U)$, suppose $\vec{u} \in U$ and $\overline\xi(\vec{u})\in (\im \overline{\phi_1} + \im \overline{\psi_\ell})$. It follows that $\xi'(\vec{u}) \in \im  \psi_\ell' = \psi_\ell'(U)$, so say $\xi'(\vec{u}) = \psi_\ell'(\vec{w})$ for some $\vec{w} \in U$. We have $0 = \xi'(\vec{u}) - \psi_\ell'(\vec{w}) = \Psi'(\alpha_1 \vec{u}, \ldots,\alpha_{\ell -1} \vec{u}, \alpha_\ell \vec{u} - \vec{w}, \alpha_{\ell+1}\vec{u}, \ldots,\alpha_m \vec{u})$, which implies $\alpha_j \vec{u} = 0$ for all $j \neq \ell$ by the injectivity of $\Psi'\vert_{\oplus_{j=1}^m U}$. As $\alpha_j \neq 0$ for some $j \neq \ell$, this implies $\vec{u} = 0$. Thus, we conclude $\im \overline{\xi} \cap (\im \overline{\phi_1} + \im \overline{\psi_\ell}) = 0$. In particular, we have $\im \overline{\xi} \cap \im \overline{\phi_1}$ giving that $\overline{\phi_1}+\overline{\xi}$ is injective by applying \autoref{finite field patching} and using that $\overline{\phi_1}$ is injective.

    It remains to show that $\overline{\psi_\ell}$ has rank at least $d+1$ modulo $\im(\overline{\phi_1} + \overline{\xi})$. To that end, let us first check that $\overline{\psi_\ell}$ has rank at least $d+1$ modulo $\overline{\phi_1}(K)$.
    By our choice of $\vec{v} \in K$ above, we have that $\overline{\psi_\ell}(\vec{v}) \in \im \overline{\phi_1}\setminus \overline{\phi_1}(K)$. Put $T = U + k  \vec{v}$, which has dimension $d + 1$.  Suppose we have $\vec{u} \in U$ and $\lambda \in k$ with $\overline{\psi_\ell}(\vec{u} + \lambda \vec{v}) \in \overline{\phi_1}(K)$. It follows that $\overline{\psi_\ell}(\vec{u}) \in \im \overline{\phi_1}$ and so also $\psi_\ell'(\vec{u}) = 0$ which gives $\vec{u} = 0$ as $\ker \psi_\ell' = K$.  Thus, $\overline{\psi_\ell}(\lambda \vec{v}) = \lambda \overline{\psi_\ell}(\vec{v}) \in \overline{\phi_1}(K) $ which yields $\lambda = 0$ as $\overline{\psi_\ell}(\vec{v}) \not\in \overline{\phi_1}(K)$. It follows that $\overline{\psi_\ell}|_{T}$ is injective modulo $\overline{\phi_1}(K)$, \textit{i.e.} $\overline{\psi_\ell}$ is injective on $T$ and $\overline{\psi_\ell}(T) \cap \overline{\phi_1}(K) = 0$. To conclude the stronger statement that $\overline{\psi_\ell}$ has rank at least $d+1$ modulo $\im(\overline{\phi_1} + \overline{\xi})$, it suffices verify $\im(\overline{\phi_1} + \overline{\xi})  \cap \overline{\psi_\ell}(T) = 0$. Suppose we have some $\vec{w} \in V$ with $\overline{\phi_1}(\vec{w}) + \overline{\xi}(\vec{w}) \in \overline{\psi_\ell}(T)$. It follows that $\overline{\xi}(\vec{w}) \in \im \overline{\xi} \cap (\im \overline{\phi_1} + \im \overline{\psi_\ell}) = 0$ and $\vec{w} \in \ker \overline{\xi} = K$. Thus we must have $\overline{\phi_1}(\vec{w}) \in \overline{\psi_\ell}(T) \cap \overline{\phi_1}(K) = 0$, so that $\overline{\phi_1}(\vec{w}) + \overline{\xi}(\vec{w}) = 0$ and hence $\im(\overline{\phi_1} + \overline{\xi})  \cap \overline{\psi_\ell}(T) = 0$.

    \begin{case}
        For all $1 \leq \ell \leq m$, $\psi_\ell(K) \subseteq \left( \sum_{i=1}^n \phi_i(K)\right)$.
    \end{case}

    Since $\dim(H(K)) > n \dim(K)$, there is some $h \in H$ with $h(K) \not\subseteq \sum_{i=1}^n \phi_i(K)$. Because $h(K) \subseteq H(K) \subseteq \sum_{i=1}^n \im \phi_i \cong \bigoplus_{i=1}^n V$ and an element of $\bigoplus_{i=1}^n V$ is in $\bigoplus_{i=1}^n K$ if and only if each of the components is in $K$, we may reorder the $\phi_i$ so that $h(K) \not\subseteq \left(\phi_1(K) + \sum_{i=2}^n \im \phi_i\right)$ and choose $\vec{v} \in K$ with $h(\vec{v}) \not\in \left(\phi_1(K) + \sum_{i=2}^n \im \phi_i\right)$. Let $\overline{W} = W / (\sum_{i=2}^n \im \phi_i)$, and for any $\phi \in \Hom_k(V,W)$ we shall denote by $\overline{\phi} \in \Hom_k(V,\overline{W})$ the map $V \xrightarrow{\phi} W \to \overline{W}$ induced by quotienting out by $\sum_{i=2}^n \im \phi_i$. Set $Z = \overline{h}(U) + \im \overline{\phi_1}$.

    Since $\dim \left( \sum_{j=1}^m \overline{\psi_j}(U) \right) = \dim(\sum_{j=1}^m  \psi_j'(U)) = md$, we see that $\dim \left( \sum_{j=1}^m \overline{\psi_j}(U) \right)$ remains unchanged modulo $\im \overline{\phi_1}$ and thus $\left( \sum_{j=1}^m \overline{\psi_j}(U) \right) \cap \im \overline{\phi_1} = 0$. It follows that
    \begin{equation*}
        Z \cap \left( \sum_{j=1}^m \overline{\psi_j}(U) \right) \cong \frac{Z \cap \left( \sum_{j=1}^m \overline{\psi_j}(U) \right)}{Z \cap \left( \sum_{j=1}^m \overline{\psi_j}(U) \right)\cap \im \overline{\phi_1}} \subseteq \frac{Z}{\im \overline{\phi_1}} = \frac{\overline{h}(U) + \im \overline{\phi_1}}{\im \overline{\phi_1}} = h'(U)
    \end{equation*}
    and in particular $\dim\left(Z \cap \left( \sum_{j=1}^m \overline{\psi_j}(U) \right)\right) \leq \dim(h'(U)) \leq \dim(U)=d$. Applying \autoref{lemma disjoint} to $\overline{\psi_1}|_U, \ldots, \overline{\psi_m}|_U \in \Hom(U,\overline{W})$ and $Z$, it follows that after reordering $\psi_1, \ldots, \psi_m$ we may assume  
    \begin{equation}
        \label{eq:case2reduction}
        Z \cap \left( \sum_{j=d+1}^m \overline{\psi_j}(U) \right) = \Big(\overline{h}(U) + \im \overline{\phi_1} \Big) \cap \left( \sum_{j=d+1}^m \overline{\psi_j}(U) \right) = 0.
    \end{equation}

    Since $\overline{\psi_{d+1}}|_K, \ldots, \overline{\psi_m}|_K \in \Hom_k(K,\overline{\phi_1}(K))$ and $\dim(\Hom_k(K,\overline{\phi_1}(K))) = (\dim(V)-d)^2 < m - d$, there must be a nontrivial linear combination $\xi = \sum_{i=d+1}^m \alpha_i \psi_i$ with $\alpha_{d+1}, \ldots, \alpha_m \in k$ not all zero and $K \subseteq \ker \overline{\xi}$. 
    If $0 \neq \vec{u} \in U$, then $\xi'(\vec{u}) = \Psi'(0, \ldots, 0, \alpha_{d+1} \vec{u}, \ldots, \alpha_m \vec{u}) \neq 0$ as $\Psi'\vert_{\oplus_{j=1}^m U}$ is an injection and $\alpha_j \neq 0$ for some $d+1 \leq j \leq m$. Thus, $\overline{\xi}(\vec{u}) \neq 0$ as well, so it follows $\overline{\xi}|_U$ is an injection, $\rank \overline{\xi} = d$, and $K = \ker \overline{\xi}$. 
    Since $m \geq \dim(V)+1 \geq d +2$, we may choose $\psi_\ell$ with $d+1 \leq \ell \leq m$ so that $\alpha_j \neq 0$ for some $d+1 \leq j \leq m$ and $j \neq \ell$. We know $\overline{\psi_\ell}|_U$ is injective as $\psi_\ell'|_U$ is injective, and it follows from \eqref{eq:case2reduction} that $\overline{\psi_\ell}(U) \cap \overline{h}(U) = 0$ so that $\overline{\psi_\ell} + \overline{h}$ restricts to an injection on $U$ by \autoref{finite field patching}. Furthermore, let us argue that 
    \begin{equation}
        \label{eq:case2intersection}
        \overline{\xi}(U) \cap \left(\overline{\psi_\ell}(U) + Z \right) = \overline{\xi}(U) \cap \left(\overline{\psi_\ell}(U) + \overline{h}(U) + \im \overline{\phi_1} \right) = 0.
    \end{equation}
    Suppose $\vec{u}, \vec{u}', \vec{u}'' \in U$ and $\vec{w} \in V$ with $\overline{\xi}(\vec{u}) = \overline{\psi_\ell}(\vec{u}') + \overline{h}(\vec{u}'') + \overline{\phi_1}(\vec{w})$. Then $- \overline{\psi_\ell}(\vec{u}') + \sum_{i=d+1}^m \alpha_i \overline{\psi_i}(\vec{u}) \in Z \cap \left( \sum_{j=d+1}^m \overline{\psi_j}(U) \right) = 0$ by \eqref{eq:case2reduction}, which implies 
    \begin{equation*}
        \Psi'(0, \ldots, 0, \alpha_{d+1}\vec{u}, \ldots, \alpha_{\ell-1}\vec{u},\alpha_{\ell}\vec{u}-\vec{u}',\alpha_{\ell+1}\vec{u}, \ldots ,\alpha_{m}\vec{u}) = 0
    \end{equation*}
    giving that $\alpha_j \vec{u} = 0$ by the injectivity of $\Psi'\vert_{\oplus_{j=1}^m U}$ as $j \neq \ell$, thus we must have $\vec{u} = 0$ and \eqref{eq:case2intersection} follows.

    We will now show that $\tilde\phi_1 := \phi_1 + \xi, \tilde\phi_2 := \phi_2, \ldots, \tilde\phi_n := \phi_n$ and $\psi := \psi_\ell + h$ are the required maps. Notice first that since $\im \overline{\xi} \cap \im \overline{\phi_1}= \overline{\xi}(U)\cap \im \overline{\phi_1} = 0$ by \eqref{eq:case2intersection} and $\overline{\phi_1}$ is injective, \autoref{finite field patching} implies that $\overline{\phi_1} + \overline{\xi}$ remains injective.
    To finish, we need to show that $\overline{\psi_\ell} + \overline{h}$ has rank at least $d + 1$ modulo $\im(\overline{\phi_1}+\overline{\xi})$. By our choice of $\vec{v} \in K$ above, we have that $\overline{h}(\vec{v}) \in \im \overline{\phi_1}\setminus \overline{\phi_1}(K)$. Put $T = U + k  \vec{v}$, which has dimension $d + 1$. Suppose we have $\vec{u} \in U$ and $\lambda \in K$ with $(\overline{\psi_\ell} + \overline{h})(\vec{u}+\lambda \vec{v}) \in \im(\overline{\phi_1}+\overline{\xi})$. As $V = U + K$, suppose $\vec{u}' \in U$ and $\vec{w} \in K$ with $(\overline{\psi_\ell} + \overline{h})(\vec{u}+\lambda \vec{v}) = (\overline{\phi_1}+\overline{\xi})(\vec{u}' + \vec{w})$. Then, as $\vec{v},\vec{w} \in K = \ker \overline{\xi}$ and $H(K)\subseteq \sum_{i=1}^n \im \phi_i$, we see
    \begin{equation*}
        \overline{\xi}(\vec{u}') =  \overline{\psi_\ell} (\vec{u})  + \overline{h}(\vec{u}) + \overline{\psi_\ell} (\lambda \vec{v}) +\overline{h}(\lambda \vec{v})- \overline{\phi_1}(\vec{u}' + \vec{w}) \in \left(\overline{\psi_\ell}(U) + \overline{h}(U) + \im \overline{\phi_1} \right)
    \end{equation*}
    and it follows that $\overline{\xi}(\vec{u}') = 0$ by \eqref{eq:case2intersection} and also $\vec{u}' = 0$ because $\overline{\xi}|_U$ is an injection. Rearranging once again, we have 
    \begin{equation*}
        \overline{\psi_\ell}(\vec{u}) = -\overline{h}(\vec{u})-\overline{\psi_\ell} (\lambda \vec{v}) -\overline{h}(\lambda \vec{v})+ \overline{\phi_1}(\vec{w}) \in \overline{h}(U) + \im \overline{\phi_1} = Z
    \end{equation*}
    and it follows that $\overline{\psi_\ell}(\vec{u}) = 0$ by \eqref{eq:case2reduction} and also $\vec{u} = 0$ because $\overline{\psi_\ell}|_U$ is an injection. Using that $\vec{w} \in K$ and $\psi_\ell(K) \subseteq (\sum_{i=1}^n \phi_i(K))$, this leaves
    \begin{equation*}
        \overline{h}(\lambda \vec{v})  = \overline{\phi_1}(\vec{w}) -\overline{\psi_\ell} (\lambda \vec{v}) \in \overline{\phi_1}(K)
    \end{equation*}
    which is only possible if $\lambda = 0$ as $h(\vec{v})\not\in \overline{\phi_1}(K)$. Putting all of this together, we conclude that given $\vec{u} \in U$ and $\lambda \in k$, we have $(\overline{\psi_\ell} + \overline{h})(\vec{u}+\lambda \vec{v}) \in \im(\overline{\phi_1}+\overline{\xi})$ only when $\vec{u} = 0$ and $\lambda = 0$. It follows that $\overline{\psi_\ell} + \overline{h}$ restricts to an injection on $T$ which persists modulo $\im(\overline{\phi_1}+\overline{\xi})$, which concludes the proof.
    \end{proof}

    \begin{corollary}\label{cor d induction step}
    Let $V$ and $W$ be finite dimensional vector spaces over a field $k$, and $H$ a subspace of $\Hom_k (V, W)$. 
    Suppose $n \geq 0$ and $1 \leq d \leq \dim V$ are integers and assume the following conditions are satisfied.
    \begin{enumerate}
    \item There exist $\phi_1, \ldots, \phi_n \in H$
    giving an injection $(\phi_1, \ldots, \phi_n) \colon \bigoplus_{i=1}^n V \to W$.
    \item We have 
    \begin{equation*}
        \dim H(U) \geq n \dim U + 1 + \sum_{i = 1}^{d-1} i(\dim V - i)\dim V .
    \end{equation*} for any non-zero subspace $0 \neq U \subseteq V$. 
    \end{enumerate}
    Then there are maps $\tilde \phi_1, \ldots, \tilde \phi_n, \psi \in H$ so that $\tilde \Phi = (\tilde \phi_1, \ldots, \tilde \phi_n) \colon \bigoplus_{i=1}^nV \to W$ is an injection and $\psi$ has rank at least $d$ modulo $\im \tilde{\Phi}$, \textit{i.e.}
    \begin{equation*}
        \dim(\im \psi + \sum_{i=1}^n \tilde \phi_i) \geq d + \dim(\sum_{i=1}^n \tilde \phi_i).
    \end{equation*}
    
\end{corollary}

\begin{proof}
If $d = 1$, we have $\dim(H(V)) \geq n \dim V + 1 = n \dim\left(\sum_{i=1}^n \im \phi_i \right)$. Taking $\psi \in H$ with $\psi(V) \not\subseteq \sum_{i=1}^n \im \phi_i$, we see that $\tilde{\phi_1} = \phi_1, \ldots, \tilde{\phi_n} = \phi_n, \psi \in H$ give a suitable collection of maps. Proceeding inductively, assume now the conclusion holds for some $d \geq 1$. 
Suppose we have finite dimensional vector spaces $V,W$ with $\dim(V) \leq d$ admitting $n$ simultaneous injections from $V$ to $W$ in $H \subseteq \Hom_k(V,W)$  and so that 
\begin{equation}
    \label{eq:Uboundininductionstep}
    \dim(H(U)) \geq n \dim(U) + 1 + \sum_{i=1}^{d} i(\dim V - i)\dim V  
\end{equation}
for any non-zero subspace $0 \neq U \subseteq V$. We need to show that there is a map in $H$ with rank at least $d+1$ modulo the image of some $n$ potentially different simultaneous injections from $V$ to $W$ in $H$.

By our induction assumption, there are $\phi_1^{(1)}, \ldots, \phi_n^{(1)},\psi_1 \in H$ with 
\begin{equation*}
        \dim(\sum_{i=1}^n \im \phi_i^{(1)} + \im \psi_1) \geq \dim(\sum_{i=1}^n \im \phi_i^{(1)}) + d =  n \dim(V) + d.
\end{equation*}
 We proceed to define $\psi_1, \ldots, \psi_\ell \in H$ and $\phi_1^{(\ell)},\ldots, \phi_n^{(\ell)} \in H$ recursively until either the desired conlusion is satisfied or we reach $\ell = (\dim V -d)\dim V + 1$. Assume we have $\psi_1, \ldots, \psi_{\ell-1} \in H$ and $\phi_1^{(\ell-1)},\ldots, \phi_n^{(\ell-1)} \in H$ so that $\dim(\sum_{i=1}^n \im \phi_i^{(\ell-1)}) = n \dim(V)$ and
\begin{equation*}
    \dim(\sum_{i=1}^n \im \phi_i^{(\ell-1)} + \sum_{j=1}^{\ell'} \im \psi_j) \geq \dim(\sum_{i=1}^n \im \phi_i^{(\ell-1)}+  \sum_{j=1}^{\ell'-1} \im \psi_j)) + d
\end{equation*}
for all $1 \leq \ell' < \ell$. In particular, we also have 
\begin{equation*}
    \dim(\sum_{i=1}^n \im \phi_i^{(\ell-1)} + \im \psi_{\ell'} ) \geq \dim(\sum_{i=1}^n \im \phi_i^{(\ell-1)}) + d
\end{equation*}
for any $1 \leq \ell' < \ell$. If this inequality is ever strict we are done as $\tilde{\phi_1} = \phi_1^{(\ell-1)}, \ldots, \tilde{\phi_n} = \phi_n^{(\ell-1)}, \psi = \psi_{\ell'} \in H$ give the desired maps, so we shall assume we have equality for all $1 \leq \ell' < \ell$. Moreover, if we have
\begin{equation*}
    (\psi_{\ell'}^{(\ell-1)})^{-1}(\sum_{i=1}^n \im \phi_i^{(\ell-1)}) \neq (\psi_{\ell''}^{(\ell-1)})^{-1}(\sum_{i=1}^n \im \phi_i^{(\ell-1)})
\end{equation*}
for some $1 \leq \ell'' < \ell' < \ell$, \autoref{finite field patching} gives that the rank of $\psi_{\ell'} + \psi_{\ell''}$ is at least $d + 1$ modulo $\sum_{i=1}^n \im \phi_i^{(\ell-1)}$ and again we are done with $\tilde{\phi_1} = \phi_1^{(\ell-1)}, \ldots, \tilde{\phi_n} = \phi_n^{(\ell-1)}, \psi = \psi_{\ell'}+ \psi_{\ell''} \in H$ giving the desired maps. Thus, we may assume $K_\ell = (\psi_{\ell'}^{(\ell-1)})^{-1}(\sum_{i=1}^n \im \phi_i^{(\ell-1)})$ is independent of $1 \leq \ell' < \ell$. Picking $U_\ell$ to be a complement of $K_\ell$, we have that $U_\ell$ has dimension $d$. Let $\overline{W} = W / (\sum_{j=1}^{\ell - 1} \psi_j(U_\ell))$, and for any $\phi \in \Hom_k(V,W)$ we shall denote by $\overline{\phi} \in \Hom_k(V,\overline{W})$ the map $V \xrightarrow{\phi} W \to \overline{W}$ induced by quotienting out by $\sum_{j=1}^{\ell - 1} \psi_j(U_\ell)$. Note that $\psi_{\ell'}(U_\ell) = \im\psi_{\ell'}$ modulo either $\sum_{i=1}^n \im \phi_i^{(\ell-1)}$ or $\sum_{i=1}^n \im \phi_i^{(\ell-1)}+  \sum_{j=1}^{\ell'-1} \im \psi_j$ for$1 \leq \ell' < \ell$, so that  $\dim(\sum_{j=1}^{\ell - 1} \psi_j(U_\ell)) = (\ell - 1)d$. Consider also $\overline{H} = \{\overline{\phi} \mid \phi \in H\} \subseteq \Hom_k(V,\overline{W})$. We have that $\overline{\phi_1^{(\ell-1)}},\ldots,\overline{\phi_n^{(\ell-1)}}$ give $n$ simultaneous injections from $V$ to $\overline{W}$ in $\overline{H}$, and
for any subspace $0 \neq U \subseteq V$ we compute
\begin{equation*}\begin{array}[]{rcl}
    \dim \overline{H}(U) & \geq & \dim H(U) - \dim(\sum_{i=1}^{\ell - 1} \psi_i(U_\ell)) \\
    & \geq& n \dim(U) + 1 + \sum_{i=1}^{d} i(\dim V - i)\dim V   - (\ell-1)d \\
    & \geq & n \dim(U) + 1 + \sum_{i=1}^{d-1} i(\dim V - i)\dim V + ((\dim V - d)\dim V + 1 - \ell)d \\
    & \geq &  n \dim(U) + 1 + \sum_{i=1}^{d-1} i(\dim V - i)\dim V 
\end{array}
\end{equation*}
since $\ell \leq (\dim V - d)\dim V + 1$. Thus, by our indiction assupmtion, there are maps $\phi_1^{(\ell)}, \ldots, \phi_n^{(\ell)}, \psi_\ell \in H$ so that $\dim(\sum_{i=1}^n \im \overline{\phi_i^{(\ell)}} + \im \overline{\psi_\ell}) \geq \dim(\sum_{i=1}^n \im \overline{\phi_i^{(\ell)}}) + d$ and $\dim(\sum_{i=1}^n \im \overline{\phi_i^{(\ell)}}) = n \dim(V)$. In particular, it follows that $\dim(\sum_{i=1}^n \im \phi_i^{(\ell)}) = n \dim(V)$. If the rank of any $\psi_{\ell'}$ modulo $\sum_{i=1}^n \im \phi_i^{(\ell)}$ is at least $d + 1$ for some $1 \leq \ell' < \ell$ we are again done, so we may assume this rank is at most $d$. As $\dim(\sum_{i=1}^n \im \phi_i^{(\ell)})$ does not change modulo $\sum_{j=1}^{\ell-1} \psi_j(U_\ell)$, we similarly must have that $\dim(\sum_{j=1}^{\ell-1} \psi_j(U_\ell)) = (l-1)d$ does not change modulo $\sum_{i=1}^n \im \phi_i^{(\ell)}$. Thus, $(\psi_1,\ldots, \psi_{\ell-1}): \bigoplus_{j=1}^{\ell-1}U_\ell \to W$ is injective and remains so after going modulo $\sum_{i=1}^n \im \phi_i^{(\ell)}$. Moreover, we must have that $\psi_{\ell'}(U_\ell) = \im\psi_{\ell'}$ modulo either $\sum_{i=1}^n \im \phi_i^{(\ell)}$ or $\sum_{i=1}^n \im \phi_i^{(\ell)}+  \sum_{j=1}^{\ell'-1} \im \psi_j$ for $1 \leq \ell' < \ell$, and also 
\begin{equation*}
    \begin{array}{rcl}
        \dim(\sum_{i=1}^n \im \phi_i^{(\ell)} + \sum_{j=1}^{\ell} \im \psi_j) & = & \dim(\sum_{i=1}^n \im \phi_i^{(\ell)} + \sum_{j=1}^{\ell-1}  \psi_j(U_\ell) + \im \psi_\ell)\\
        & = &  \dim(\sum_{i=1}^n \im \overline{\phi_i^{(\ell)}}+  \im \overline{\psi_\ell})  \\
        & \geq & \dim(\sum_{i=1}^n \im \overline{\phi_i^{(\ell)}}) + d \\ 
        & = &  \dim(\sum_{i=1}^n \im \phi_i^{(\ell)}+  \sum_{j=1}^{\ell-1}  \psi_j(U_\ell)) + d \\ 
        & = &  \dim(\sum_{i=1}^n \im \phi_i^{(\ell)}+  \sum_{j=1}^{\ell-1} \im \psi_j) + d.
    \end{array}   
\end{equation*}
This completes our recursive construction, as it now follows $\dim(\sum_{i=1}^n \im \phi_i^{(\ell)}) = n \dim(V)$ and
\begin{equation*}
    \dim(\sum_{i=1}^n \im \phi_i^{(\ell)} + \sum_{j=1}^{\ell'} \im \psi_j) \geq \dim(\sum_{i=1}^n \im \phi_i^{(\ell)}+  \sum_{j=1}^{\ell'} \im \psi_j)) + d
\end{equation*}
for all $1 \leq \ell' \leq \ell$. 

To finish the proof, we need only address the remaining case where the recursion above proceeded all the way to $\ell - (\dim(V)-d)\dim(V) + 1$. However, the desired conclusion now follows from \autoref{thm:finitefieldinductionstep}, as conditions (a) and (c) from \autoref{thm:finitefieldinductionstep} are satisfied by $\phi_1^{{\ell}}, \ldots, \phi_n^{(\ell)}$ and $\psi_1, \ldots, \psi_\ell$, and condition (b) is immediate from \eqref{eq:Uboundininductionstep}.
    %
    \end{proof}

    \begin{corollary}\label{cor finite injections exist}
    Let $k$ be an arbitrary field, $V, W$ be finite-dimensional vector spaces over $k$, and $H$ be a subspace of $\Hom_k (V, W)$.
    Suppose that for some $n \geq 0$
    and any $0 \neq U \subseteq V$ we have
    \begin{equation*}
        \dim H(U) \geq (n-1) \dim U + 1 + \sum_{i = 1}^{\dim V - 1} i(\dim V - i)\dim V .
    \end{equation*}
    Then there is an injection $\oplus^{n} V \to W$ where all components are in $H$.
    \end{corollary}
    \begin{proof}
    We use induction on $n$, noting first that the base case $n = 0$ is trivially satisifed. Assume now that the statement holds for some $n \geq 0$ and we have finite dimensional $k$-vector spaces $V,W$ and $H \subseteq \Hom_k(V,W)$ with
    \begin{equation*}
            \dim H(U) \geq \displaystyle n\dim U + 1 + \sum_{i = 1}^{\dim V - 1} i(\dim V - i)\dim V.
    \end{equation*} 
    The induction hypothesis implies there exist $\phi_1, \ldots, \phi_n \in H$ so that $(\phi_1, \ldots, \phi_n) \colon \bigoplus_{i=1}^n V \to W$ is an injection. Applying
    Corollary~\ref{cor d induction step}, we have $\psi \in H$ with full rank modulo simultaneous injections $\tilde{\phi}_1, \ldots, \tilde{\phi}_n \in H$, giving an injection $(\tilde{\phi}_1, \ldots, \tilde{\phi}_n, \psi) \colon \bigoplus^{n+1} V \to W$ with all components in $H$ as desired.
    \end{proof}

    \begin{proof}[Proof of \autoref{thm:linalgcriterion}]
        Combining \eqref{eq:easyUbound} and \autoref{cor finite injections exist}, we must have 
        \begin{equation*}
            n \dim(U) \leq \dim H(U) \leq n \dim(U) + \sum_{i = 1}^{\dim V - 1} i(\dim V - i)\dim V
        \end{equation*}
        for all subspaces $0 \neq U \subseteq V$.
        Dividing through by $\dim(U)$, it follows the constant $C := \sum_{i = 1}^{\dim V - 1} i(\dim V - i)\dim V = \frac{1}{6} \left(\dim(V)\right)^2 \left((\dim(V))^2 - 1\right)$ satisfies \eqref{eq:linalgconstant}.
    \end{proof}

To conclude this section, we exhibit a dual formulation of the above results that is tailored towards our desired applications in later sections. 

\begin{corollary}\label{cor:linalgsurjections}
    The polynomial $P(T) = \frac{1}{6}T^2(T^2-1) \in \mathbb Q[T]$ is an increasing function on positive integers 
    with the following property: for any integer $n \geq 1$, any field $k$, all finite dimensional vector spaces $X,Y$ over $k$, and all subspaces $H \subseteq \Hom_k(X,Y)$ so that 
    \[
    \dim \left( X/(\cap_{h \in H} \ker (\pi_Z \circ h)) \right) \geq \left( n + P(\dim Y) \right )\dim Z,
    \]
    for all non-trivial quotients $\pi_Z\colon Y \to Z \neq 0$, there exists a surjection $X \to \oplus^n Y$ with all components in $H$.
    \end{corollary}

\begin{proof}
    Let $h_1, \ldots, h_\ell$ be a basis of $H$. For any surjection $\pi_Z \colon Y \to Z \neq 0$ of vector spaces, observe first that $\cap_{h\in H} \ker(\pi_Z \circ h) = \cap_{i=1}^\ell \ker(\pi_Z \circ h_i)$. Writing $\Phi_Z$ for the composition
    \begin{equation*}
        X \xrightarrow{(h_1, \ldots, h_\ell)} \bigoplus_{i=1}^\ell Y \xrightarrow{\oplus \pi_Z} \bigoplus_{i=1}^\ell Z,
    \end{equation*}
    our assumptions give that $\rank \Phi_Z \geq (n+P(\dim Y))\dim Z$ for all non-trivial quotients $Z$ of $Y$.

    We let $(\blank)^* = \Hom_k(\blank,k)$ and use duality for finite-dimensional vector spaces over $k$. Put $H^* = \{\phi^* \mid \phi \in H \} \subseteq \Hom_k(Y^*,X^*)$. For every non-trivial subspace $0 \neq U \subseteq Y^*$,  the rank of
    \begin{equation*}
        \bigoplus_{i=1}^\ell U \xrightarrow{} \bigoplus_{i=1}^\ell Y^* \xrightarrow{(h_1^*, \ldots, h_\ell^*)} X^*
    \end{equation*}
    equals the rank of $\Phi_{U^*}$ and so is at least $(n+P(\dim Y))\dim U$. It follows that $\dim H^*(U) \geq (n+P(\dim Y))\dim U$ for all $0 \neq U \subseteq Y^*$. Applying \autoref{thm:linalgcriterion}, we have that there exists an injection
    $
        \bigoplus^n Y^* \xrightarrow{(\phi_1^*, \ldots, \phi_n^*)} X^*
    $
    for some $\phi_1^*, \ldots, \phi_n^* \in H^*$. Dualizing yields a surjection $X \xrightarrow{(\phi_1, \ldots, \phi_n)} \bigoplus^n Y$ with $\phi_1, \ldots, \phi_n \in H$ as desired.
\end{proof}
\label{linear algebra}

\end{appendix}

\end{document}